\definecolor{ivory}{rgb}{1.0, 1.0, 1.0}
\definecolor{carnelian}{rgb}{0.7, 0.11, 0.11}
	\definecolor{cinnamon}{rgb}{0.82, 0.41, 0.12}
\newtheorem{thm}{Theorem}[section]
\newtheorem{lem}[thm]{Lemma}
\newtheorem{cor}[thm]{Corollary}
\theoremstyle{definition}
\newtheorem{df}[thm]{Definition}
\newcommand{\G}{\Gamma}
\theoremstyle{definition}
\newtheorem{rk}[thm]{Remark}
\newtheorem{ex}[thm]{Example}
\newtheorem{nt}[thm]{Notation}
\newtheorem{obs}[thm]{Observation}
\newtheorem*{mainthmA}{Theorem A}
\newtheorem*{mainthmB}{Theorem B}
\newtheorem*{mainthmC}{Theorem C}
\begin{document}


\title{ \vspace{-0.0cm}  Latent symmetry of graphs and stretch factors in Out$(F_r)$}
\author{\vspace{-0.0cm} Paige Hillen}
\address{\tt University of California - Santa Barbara, Department of Mathematics
  \newline \indent  {\url{https://sites.google.com/view/paigehillen}}, } \email{\tt paigehillen@ucsb.edu}


\maketitle

\begin{abstract} \vspace{-0.9cm} Every irreducible outer automorphism of the free group of rank $r$ is topologically represented by an irreducible train track map $f: \Gamma \rightarrow \Gamma$ for some graph $\G$ of rank $r$. Moreover, $f$ can always be written as a composition of \textquotedblleft folds" and a graph isomorphism. We give a lower bound on the stretch factor of an irreducible outer automorphism in terms of the number of folds of $f$ and the number of edges in $\Gamma$. In the case that $f$ is periodic on the vertex set of $\G$, we show a precise notion of the latent symmetry of  $\Gamma$ gives a lower bound on the number of folds required. We use this notion of latent symmetry to classify all possible irreducible single fold train track maps. \end{abstract}

\section{Introduction}\label{intro}

Let $F_r$ denote the free group of rank $r$ for $r \geq 2$, and Out$(F_r)$ the group of outer automorphisms of $F_r$. Given $\varphi \in $ Out$(F_r)$, the stretch factor of $\varphi$, is given by
$$\lambda(\varphi) : = \sup_{w \in F_r} \lim \sup ||\varphi^n(w)||^{1/n},$$
where $|| \cdot ||$ is the cyclically reduced word length. The stretch factor measures the asymptotic growth rate of words under repeated application of $\varphi$. Irreducible elements of Out$(F_r)$ have an \textit{irreducible train track representative}, that is a self homotopy equivalence of a graph of rank $r$, which induces $\varphi$ on the fundamental group and has certain desirable properties under iteration \cite{bh92}. The stretch factor of $\varphi$ appears as the leading eigenvalue of the transition matrix of such a train track representative, and hence is a \textit{weak Perron number}, that is, a real positive algebraic integer which is larger than or equal to its algebraic conjugates in modulus. 

  Conversely, Thurston showed every weak Perron number is the stretch factor of some outer automorphism \cite{thurstonstretch}, \cite{thurstonexplained}. In Thurston's proof, he explicitly constructs an irreducible train track map with stretch factor equal to a given weak Perron number. The maps he constructs are all on a $(1,N)-$bipartite graph with $7$ edges between the single vertex set and each vertex in the $N$ vertex set. There is no control on $N$, and hence no control on the rank of the corresponding free group. It remains an interesting question which weak Perron numbers can occur as stretch factors in a fixed rank. 
In particular, we are concerned with finding the minimal such stretch factor. 

Progress has been made towards this question: \cite{AKRstretch} gives an upper and lower bound for this minimum in terms of the rank $r$, and \cite{wiggd1} finds the minimal stretch factor among fully irreducible elements of Out($F_3)$. 
Intuitively, fewer folds in the fold decomposition of $f$ (\cite{s83}) should yield shorter word lengths of images of edges under $f$, and thus a smaller stretch factor. This is captured in the following result.

\vspace{0.25cm}
\noindent \hyperref[lower bound]{\textbf{Theorem A.}} \textit{ Suppose $f:\Gamma \rightarrow \Gamma$ is an irreducible homotopy equivalence self graph map with fold decomposition consisting of $m$ total folds. Let $n = |\mathcal{E}\Gamma|$, where $\mathcal{E}\G$ is the edge set of $\G$. Then
$$(m+1)^{\frac{1}{n}} \leq \lambda_f$$
where $\lambda_f$ is the largest eigenvalue of the transition matrix of $f$.}
\vspace{0.15cm}

\begin{rk}\label{finite list} When $f$ is an irreducible train track representative of $\varphi \in $ Out$(F_r)$, we have $\lambda_f = \lambda(\varphi)$. Hence, given a specific stretch factor $\lambda$  in some rank $r$, the above theorem gives a finite list of pairs (number of edges, number of folds) which could possibly correspond to an irreducible  train track map with stretch factor less than $\lambda$. 
\end{rk}

Out$(F_r)$ plays a similar role for graphs that the mapping class group plays for surfaces, with fully irreducible elements of Out$(F_r)$ corresponding to pseudo-Anosov elements of $\mathcal{MCG}(S)$. In the mapping class group setting, every stretch factor of a pseudo-Anosov is a \textit{bi-Perron} algebraic unit, but it is still unknown exactly which such units can occur. In 1991, Penner showed bounds on the minimal stretch factor in terms of the genus $g$ for closed surfaces  \cite{penner}:
 $$ (A)^{\frac{1}{g}} \leq \min\{\lambda: \text{ pseudo-Anosov }f:S_g \rightarrow S_g \text{ has stretch factor }\lambda\} \leq (B)^{\frac{1}{g}}$$ 
 for explicit constants $A$ and $B$. Since then, many have studied minimal stretch factors, including the case of surfaces with punctures or for certain subsets of $\mathcal{MCG}(S)$  (\cite{hamsong}, \cite{choham}, \cite{eriko}, \cite{FLMsmall}, \cite{liechtipenner}, \cite{lovingleast}, \cite{yazdi}). 
 In 2021, Pankau and Liechti used Thurston's construction of pseudo-Anosov homeomorphisms to show every bi-Perron unit $\lambda$ has a power which is a stretch factor of a pseudo-Anosov homeomorphism on a closed orientable surface of genus coarsely determined by the algebraic degree of $\lambda$ \cite{pankau2}. However, there is no control on how large of a power one needs to take. For  genus $g$ surfaces with $n >0$ punctures, $\pi_1(S_{g,n})$ is a free group, and hence elements of the mapping class group correspond to outer automorphisms of $F_{2g+n-1}$. Such outer automorphisms are called geometric. In a certain sense, outer automorphisms are generically not geometric, meaning they cannot be realized as a homeomorphism on a surface \cite{gersten}.


Remark \ref{finite list} suggests a computational strategy for finding minimal stretch factors in Out$(F_r)$. Knowing which rank $r$ graphs can possibly support an irreducible train track map with at most $m$ folds would reduce the computation involved in this procedure. 
As we require $f: \Gamma \rightarrow \Gamma$ is \textit{irreducible} on the edges of $\Gamma$, and folds help ensure irreducibility, there is a delicate balance between reducing folds and maintaining mixing amongst the edges of $\G$ under applications of $f$.  With this in mind, and taking inspiration from the language of stacks and mixing edges introduced in \cite{AKRstretch}, we define a graph invariant called the \hyperref[stack score]{stack score}, denoted $\mathfrak{S}(\G) \in \mathbb{N}$, as a way to measure the latent symmetry of $\G$. Informally, a smaller stack score reflects a higher degree of latent symmetry. In turn, latent symmetry allows one to incorporate more mixing into the graph isomorphism which follows the folds, and hence require fewer folds.

\vspace{0.25cm}
\noindent \hyperref[symmetry and folds]{\textbf{Theorem B.}} 
\textit{Any irreducible expanding homotopy equivalence self graph map $f:\Gamma \rightarrow \Gamma$ which is periodic on the vertex set of $\G$ must have at least $\mathfrak{S}(\Gamma)$ folds. }
\vspace{0.25cm}

It appears the condition that $f$ is periodic on the vertex set (equivalently, $f$ is a bijection on the vertex set) is not too restricive. For example, $f$ having a Stallings fold decomposition consisting of only proper full folds (and a graph isomorphism) is enough to guarantee periodicity of the vertex set. However, if $f$ has complete and partial folds, it may or may not be periodic on the vertices. 

The stretch factor of $\varphi \in $ Out$(F_r)$ represented by an irreducible train track map $f: \G \rightarrow \G$ is the leading eigenvalue of the integral $|\mathcal{E} \G| \times |\mathcal{E}\G|$ transition matrix of $f$.  \cite{bh92} Hence the algebraic degree of the stretch factor is bounded from above by the number of edges of $\G$. The following corollary, directly implied by Theorems A and B, is another example of a property of $\G$ affecting the set of possible stretch factors of train track maps on $\G$.

\vspace{0.25cm}
\noindent \hyperref[main cor]{\textbf{Corollary \ref*{main cor}}} \textit{Let $f :\G \rightarrow \G$ be an  irreducible expanding homotopy equivalence self graph map which is periodic on the vertex set of $\G$. Let $n=|\mathcal{E}\G|$. Then 
$$(\mathfrak{S}(\G)+1)^{\frac{1}{n}} \leq \lambda_f$$
where $\mathfrak{S}(\G)$ is the stack score of $\G$ and $\lambda_f$ is the leading eigenvalue of the transition matrix of $f$. }
\vspace{0.25cm}

Leveraging the restriction that a single fold irreducible self graph map must be periodic on the vertices and take place on a graph with stack score equal to 1, we obtain the following result. 

\newpage 

\begin{multicols}{2}
\noindent \hyperref[single folds]{\textbf{Theorem C.}} 
 \textit{Suppose $\G$ is a connected rank $r$ graph and $f: \G \rightarrow \G$ is a single fold irreducible homotopy equivalence self graph map. 
 Then $\G$ is isomorphic to one of the graphs to the right for some $k \geq 2$. }
 \vspace{0.3cm}
 
 \noindent \textit{In particular:
\begin{enumerate}
\item[(i)] if  $r \equiv 0$ mod $3$, then $\G \cong G \in \{R_r, \Delta_k^{-}\}$,
\item[(ii)] if $r \equiv 1$ mod $3$, then $\G \cong R_r$, and
\item[(iii)] if $r \equiv 2$ mod $3$, then  $\G \cong G \in \{R_r, \Delta_k^+\}$,
\end{enumerate}
for appropriate values of $k$. }
\columnbreak
\tikzset{every picture/.style={line width=0.75pt}} 

\hspace{-1.3cm} \vspace{0.3cm} \begin{tikzpicture}[x=0.75pt,y=0.75pt,yscale=-1,xscale=1]

\draw    (56.64,90.7) .. controls (23,37) and (93,38) .. (57.28,92.25) ;
\draw    (57.28,92.25) .. controls (99,48) and (116,108) .. (58.14,94.26) ;
\draw  [fill={rgb, 255:red, 0; green, 0; blue, 0 }  ,fill opacity=1 ][line width=0.75]  (58.98,90.32) .. controls (60.03,91.24) and (60.11,92.85) .. (59.17,93.92) .. controls (58.23,94.98) and (56.62,95.1) .. (55.58,94.17) .. controls (54.53,93.25) and (54.45,91.64) .. (55.39,90.58) .. controls (56.33,89.51) and (57.94,89.4) .. (58.98,90.32) -- cycle ;
\draw [line width=0.75]    (56.28,92.25) .. controls (-13,104) and (21,47) .. (56.64,90.7) ;
\draw    (58.14,94.26) .. controls (55,155) and (5,115) .. (56.28,92.25) ;
\draw    (135.04,105.36) .. controls (153,75) and (155,70) .. (171.04,44.36) ;
\draw    (135.04,105.36) .. controls (172,104) and (174,103) .. (207.04,104.36) ;
\draw    (171.04,44.36) .. controls (188,68) and (191,74) .. (207.04,104.36) ;
\draw    (135.04,105.36) .. controls (133,72) and (142,56) .. (171.04,44.36) ;
\draw    (171.04,44.36) .. controls (203,53) and (212,72) .. (207.04,104.36) ;
\draw    (135.04,105.36) .. controls (161,123) and (189,122) .. (207.04,104.36) ;
\draw    (135.04,105.36) .. controls (116,72) and (140,42) .. (171.04,44.36) ;
\draw    (171.04,44.36) .. controls (209,39) and (226,74) .. (207.04,104.36) ;
\draw    (264.04,101.36) .. controls (282,71) and (284,66) .. (300.04,40.36) ;
\draw    (264.04,101.36) .. controls (301,100) and (303,99) .. (336.04,100.36) ;
\draw    (300.04,40.36) .. controls (317,64) and (320,70) .. (336.04,100.36) ;
\draw    (264.04,101.36) .. controls (259,66) and (265,51) .. (300.04,40.36) ;
\draw    (300.04,40.36) .. controls (334,50) and (345,56) .. (336.04,100.36) ;
\draw    (264.04,101.36) .. controls (285,117) and (312,121) .. (336.04,100.36) ;
\draw    (264.04,101.36) .. controls (275,130) and (325,132) .. (336.04,100.36) ;

\draw (22.5,75.4) node [anchor=north west][inner sep=0.75pt]  [font=\scriptsize]  {$e_{1}$};
\draw (51.5,52.4) node [anchor=north west][inner sep=0.75pt]  [font=\scriptsize]  {$e_{2}$};
\draw (77.5,76.4) node [anchor=north west][inner sep=0.75pt]  [font=\scriptsize]  {$e_{3}$};
\draw (38.5,105.4) node [anchor=north west][inner sep=0.75pt]  [font=\scriptsize]  {$e_{r}$};
\draw (57.26,110.42) node [anchor=north west][inner sep=0.75pt]  [font=\large,rotate=-326.41]  {$\dotsc $};
\draw (46,12.4) node [anchor=north west][inner sep=0.75pt]  [font=\normalsize]  {$R_{r}$};
\draw (165,9.4) node [anchor=north west][inner sep=0.75pt]  [font=\normalsize]  {$\Delta _{k}^{-}$};
\draw (165,90.4) node [anchor=north west][inner sep=0.75pt]  [font=\scriptsize]  {$a_{1}$};
\draw (164,118.4) node [anchor=north west][inner sep=0.75pt]  [font=\scriptsize]  {$a_{k-1}$};
\draw (179.03,103.73) node [anchor=north west][inner sep=0.75pt]  [font=\tiny,rotate=-89.49]  {$\dotsc $};
\draw (179,73.4) node [anchor=north west][inner sep=0.75pt]  [font=\scriptsize]  {$c_{1}$};
\draw (203.24,66.23) node [anchor=north west][inner sep=0.75pt]  [font=\tiny,rotate=-142.02]  {$\dotsc $};
\draw (213,51.4) node [anchor=north west][inner sep=0.75pt]  [font=\scriptsize]  {$c_{k}$};
\draw (155,73.4) node [anchor=north west][inner sep=0.75pt]  [font=\scriptsize]  {$b_{1}$};
\draw (150.39,77.07) node [anchor=north west][inner sep=0.75pt]  [font=\tiny,rotate=-216.17]  {$\dotsc $};
\draw (119,51.4) node [anchor=north west][inner sep=0.75pt]  [font=\scriptsize]  {$b_{k}$};
\draw (292,9.4) node [anchor=north west][inner sep=0.75pt]  [font=\normalsize]  {$\Delta _{k}^{+}$};
\draw (297,85.4) node [anchor=north west][inner sep=0.75pt]  [font=\scriptsize]  {$a_{1}$};
\draw (306.03,99.73) node [anchor=north west][inner sep=0.75pt]  [font=\tiny,rotate=-89.49]  {$\dotsc $};
\draw (307,67.4) node [anchor=north west][inner sep=0.75pt]  [font=\scriptsize]  {$c_{1}$};
\draw (336,47.4) node [anchor=north west][inner sep=0.75pt]  [font=\scriptsize]  {$c_{k}$};
\draw (336.24,62.23) node [anchor=north west][inner sep=0.75pt]  [font=\tiny,rotate=-142.02]  {$\dotsc $};
\draw (284,67.4) node [anchor=north west][inner sep=0.75pt]  [font=\scriptsize]  {$b_{1}$};
\draw (254,48.4) node [anchor=north west][inner sep=0.75pt]  [font=\scriptsize]  {$b_{k}$};
\draw (277.69,71.99) node [anchor=north west][inner sep=0.75pt]  [font=\tiny,rotate=-216.17]  {$\dotsc $};
\draw (293,124.4) node [anchor=north west][inner sep=0.75pt]  [font=\scriptsize]  {$a_{k+1}$};

\end{tikzpicture}
\end{multicols}
\vspace{0.1cm}

Examples 6.2 and 6.3 in \cite{AKRstretch} are single fold irreducible train track maps on $R_r$ and $\{\Delta_k^-,\Delta_k^+\}$, respectively. Algom-Kfir and Rafi conjecture these maps on $\Delta_k^+$ and $\Delta_k^-$ attain the minimal stretch factor in their rank. For fully irreducible elements of Out$(F_3)$, \cite{wiggd1} shows this is indeed the case for $\Delta_2^-$, see Example \ref{ahlp single fold}.  As a consequence of Theorems \hyperref[lower bound]{A} and \hyperref[single folds]{C}, the Out$(F_r)$ conjugacy class determined by $\mathfrak{g}$ on $\Delta_2^-$ is in fact the unique minimizing conjugacy class among infinite order irreducible elements in Out$(F_3)$, see Corollary \ref{unique min}. \\

\noindent \textbf{Structure of the Paper.} \hyperref[background]{Section~\ref*{background}} gives necessary background about Out$(F_r)$ and graph maps. In \hyperref[folds and mixing]{Section~\ref*{folds and mixing}} we state and prove two lemmas relating folds and the length of images of edges. \hyperref[stack graphs]{Section~\ref*{stack graphs}} introduces stack graphs as a tool to understand the dynamics of components of irreducible graph maps. In \hyperref[lower bound proof]{Section~\ref*{lower bound proof}} we prove Theorem \hyperref[lower bound]{A} using stack graphs, and provide an alternate proof using Lemma \ref{hamsong} from \cite{hamsong} in the case that the transition matrix is primitive. \hyperref[latent symmetry]{Section \ref*{latent symmetry}} defines stack score and proves Theorem \hyperref[symmetry and folds]{B}. \hyperref[single fold maps]{Section~\ref*{single fold maps}} defines polygonal graphs and gives the proof of Theorem \hyperref[single folds]{C}. \hyperref[single fold maps]{Section~\ref*{further}} explores some applications and interesting examples. \\

\noindent \textbf{Acknowledgements.} Catherine Pfaff provided valuable guidance throughout the development of ideas in this paper. The author is also grateful to Darren Long for his steadfast encouragement and support, and to Mladen Bestvina, Naomi Andrew, and Robert Lyman for helpful conversations and comments on earlier versions of this paper. Chi Cheuk Tsang pointed out the use of Lemma \ref{hamsong} from \cite{hamsong} as a method of proving Theorem \hyperref[lower bound]{A} in the case that $f$ represents a fully irreducible outer automorphism. The author is also grateful to the referee for detailed comments on an earlier version of this paper. 

\bigskip
\section{Background}\label{background}

Let $r \in \mathbb{Z}_{\geq 2}$ and $F_r$ be the free group of rank $r$. We are interested in the \textit{outer automorphisms of }$F_r$, 
$$\text{Out}(F_r) := \text{Aut}(F_r)/\text{Inn}(F_r).$$

In many ways, Out$(F_r)$ plays a similar role for graphs that the mapping class group plays for surfaces. Given a 
surface $S$, the mapping class group  of $S$, $\mathcal{MCG}(S)$, is the group of isotopy classes of homeomorphisms on $S$. In 1974, Thurston classified elements of $\mathcal{MCG}(S)$ as either reducible, finite-order, or pseudo-Anosov \cite{thurston88}. Upon announcing his work, it was realized Nielsen made a similar discovery from a different perspective, and this classification is now known as the Nielsen--Thurston classification. 
Using the technology of train track maps on graphs, Bestvina and Handel developed an analogous classification of elements in Out$(F_n)$ \cite{bh92}. 
\begin{df}\label{irred, fullyirred} (Reducible, Irreducible, Fully Irreducible) An element $\varphi \in $ Out$(F_r)$ is called \textit{reducible} if there are free factors $A,B_1,\dots,B_k$ for $k >0$, such that $F_r = A * B_1 * \dots * B_k$ and $\varphi$ transitively permutes the conjugacy classes of the $B_i$.  Otherwise, $\varphi$ is \textit{irreducible}. We say $\varphi$ is \textit{fully irreducible} if every power of $\varphi$ is irreducible. \end{df}


From some perspectives, fully irreducible outer automorphisms are analogous to pseudo-Anosov elements in the mapping class group. 
  
\begin{df}\label{graph} (Graph, Directed Graph) A \textit{graph} $\G$ is a 1-dimensional CW complex whose 0-simplices are vertices, denoted $\mathcal{V}G$, and whose 1-simplices are edges, denoted $\mathcal{E}\G$. Note that we allow for multiple edges between vertices, as well as self loops. We will always assume our graphs have finitely many edges and vertices.

 When there is a choice of orientation on each edge, $\G$ is a \textit{directed graph} and we let $\mathcal{E}\G$ denote the set of positively oriented edges, $\mathcal{E}^-\G$ the negatively oriented edges, and $\mathcal{E}^\pm \G$ the union of both. We let $\overline{e}$ denote the edge $e$ with reversed orientation. We have initial and terminal maps $$\iota, \tau : \mathcal{E}^\pm \G \rightarrow \mathcal{V}\G$$
 given by $\iota(e)=$ initial vertex of $e$ and $\tau(e)=$ terminal vertex of $e$.
 \end{df}
 
 \begin{df}\label{edge path} (Edge Path) An \textit{edge path} in $\G$ is a nonempty concatenation of oriented edges $e_1 \dots e_k$ such that $\tau(e_i) = \iota(e_{i+1})$ for all $1 \leq i \leq k-1$. If $u = e_1 \dots e_k$ is an edge path, then 
 \begin{enumerate}
 \item[(i)] $\iota(u) := \iota(e_1)$, 
 \item[(ii)]  $\tau(u):=\tau(e_k)$, and 
 \item[(iii)] $\overline{u} := \overline{e_k} \dots \overline{e_1}$. 
 \end{enumerate} Let $\mathcal{E}\mathcal{P}\G$ denote the set of edge paths in $\G$. Note that we can interpret $\mathcal{E}^{\pm} \G $ as a subset of $\mathcal{E}\mathcal{P}\G$ by identifying an oriented edge $e$ with the edge path equal to $e$. 
\end{df}



\begin{df}\label{graph map} (Graph Map) Given graphs $\G_1$ and $\G_2$, a  \textit{graph map} $f: \G_1 \rightarrow \G_2$ consists of maps
\begin{enumerate} \item[(i)] $f_{V}:\mathcal{V}\G_1 \rightarrow \mathcal{V}\G_2$, and 
\item[(ii)] $f_E: \mathcal{E}^{\pm}\G_1 \rightarrow \mathcal{E}\mathcal{P}\G_2$ such that $f_V(\iota(e))=\iota(f_E(e))$ and $f_E(\overline{e})=\overline{f_E(e)}$ for every $e \in \mathcal{E}^{\pm}\G$.  
\end{enumerate}
\end{df}

\begin{nt}\label{word length} Given an edge path $u$ in a graph $\G$, we use $|u|$ to denote the number of edges in $u$. We say $u$ \textit{traverses} $e \in \mathcal{E}\G$ if $e$ or $\overline{e}$ appears as an edge in $u$. Note that if a sequence $e \overline{e}$ appears in an edge path $u$, both $e$ and $\overline{e}$ contribute to the number of edges in $u$. In other words, we do not tighten the path $u$ before counting the number of edges. Thus $|f(u)| \geq |u|$ for any graph map $f$ and edge path $u$. 
\end{nt}

\begin{df}\label{isomorphism} (Graph Isomorphism, Graph Automorphism)  A graph map $f: \G_1 \rightarrow \G_2$ is a \textit{graph isomorphism} if 
\begin{enumerate}
\item[(i)]$f_V$ is a bijection, and 
\item[(ii)] $f_E$ is injective with image equal to $\mathcal{E}^{\pm}\G_2$.
\end{enumerate} 
A graph isomorphism $f:\G \rightarrow \G$ is a \textit{graph automorphism}. \end{df}

\begin{nt} Given a graph map $f:\G_1\rightarrow \G_2$, we often drop the subscripts on the corresponding maps on the vertices and edges, and just write $f(e)$ for $f_E(e)$ and $f(v)$ for $f_V(v)$ when it is clear that $e$ is an edge and $v$ is a vertex. 
\end{nt}

\begin{nt} When $\G_1$ has no isolated vertices, a graph map $f:\G_1 \rightarrow \G_2$ is entirely determined by $f_E$ restricted to the set of positively oriented edges of $\G_1$. We will often define a graph map by just giving its image on every positively oriented edge. 
\end{nt}

In order to define graph maps on $\G$, we always assume our graphs have an orientation on each edge. However, since edge paths can traverse edges backwards, these orientations do not carry meaningful information about the nature of the graph itself (with the exception of stack graphs, see Definition \ref{stack graph}). 

\noindent If $f:\G \rightarrow \G$ is a homotopy equivalence on a connected graph $\G$, then the induced map 
$$f_*:\pi_1(\G) \rightarrow \pi_1(\G)$$
is an outer automorphism of $\pi_1(\G)$. As $\pi_1(\G)$ is isomorphic to a free group $F_r$, after a choice of identification of $\pi_1(\G)$ with  $F_r$, we can consider $f_*$ as an element of Out$(F_r)$. We say that $f: \G \rightarrow \G$ \textit{topologically represents }$f_*$. Different choices of identification of $\pi_1(\G)$ with $F_r$ give Out$(F_r)$-conjugate outer automorphisms. 

\begin{df}\label{transition matrix} (Transition Matrix) Given a self graph map $f : \G \rightarrow \G$, and an order on the set of edges $(e_1, \dots, e_n)$, the \textit{transition matrix} of $f$, denoted $T(f)$, is the $|\mathcal{E}\G| \times |\mathcal{E}\G|$ matrix $(a_{ij})$ where $a_{ij}$ is the number of times $f(e_i)$ traverses $e_j$ in either direction.
\end{df}

\begin{df}\label{irreducible, primitive} (Irreducible, Primitive) Let $M$ be an $n \times n$ matrix. 
\begin{itemize}
\item[(i)] $M$ is \textit{irreducible} if for each $1 \leq i,j \leq n$, there is a power $k$ such that the $ij$-th entry of $M^k$ is positive. When $M$ is non-negative, this is equivalent to requiring that $M$ has no non-trivial proper invariant coordinate subspaces. The coordinate subspaces are those which are spanned by a subset of the standard basis elements in $\mathbb{R}^n$. 

\item[(ii)] $M$ is \textit{primitive} if it is non-negative and there is a power $k$ such that all entries of $M^k$ are positive. 
\end{itemize}
\end{df}

\begin{df}\label{irreducible graph map} (Irreducible Graph Map) We call a self graph map $f:\G \rightarrow \G$  \textit{irreducible}  if $T(f)$ is an irreducible matrix and the valence of every vertex in $\G$ is at least $3$. 
\end{df}

\begin{df}\label{expanding} (Expanding Graph Map) A self graph map $f:\G \rightarrow \G$ is \textit{expanding} if $|f^n(e)| \rightarrow \infty$ as $n \rightarrow \infty$ for every edge $e \in \mathcal{E}\G$. 
When $f$ is an irreducible homotopy equivalence, this is equivalent to requiring the largest eigenvalue of $T(f)$ is strictly greater than 1 in modulus (see Lemma \ref{helpful facts}). \end{df}

\begin{df}\label{train track} (Train Track Map) A self graph map $f:\G \rightarrow \G$ is a \textit{train track map} if it is a homotopy equivalence and for all powers $n \in \mathbb{N}$, $f^n$ is locally injective on the interior of every edge $e$.  
\end{df}

We will sometimes refer to an irreducible train track map as an i.t.t. map and an irreducible homotopy equivalence graph map as an i.h.e. map. Our proofs do not use the locally injective property of train track maps, and hence our results are stated for i.h.e. maps.\\


The following theorem reduces the question of stretch factors of irreducible outer automorphisms to a question about leading eigenvalues of their i.t.t. representatives. 

\begin{thm}[\cite{bh92}]\label{bestvina}Every irreducible outer automorphism $\varphi \in $ Out$(F_r)$ is represented by an irreducible train track map $f: \G \rightarrow \G$ on a connected rank $r$ graph $\G$. The leading eigenvalue of $T(f)$, denoted $\lambda_f$, is real, positive, and equal to the stretch factor of $\varphi$. Moreover, there is a length function $\ell$ on the edges of $\G$ such that $f$ is uniformly $\lambda_f-$expanding on $(\G,\ell)$. That is, $\ell(f(e))=\lambda_f \ell (e)$ for every $e \in \mathcal{E}\G$. Further, $\varphi$ is a finite-order homeomorphism if and only if $\lambda_f=1$.
\end{thm}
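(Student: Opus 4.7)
The plan is to follow the Bestvina--Handel algorithm: start with any marked graph representative of $\varphi$, measure its complexity by the Perron--Frobenius eigenvalue of its transition matrix, and then perform graph moves that decrease this complexity until no further reduction is possible, at which point the representative is a train track.

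First I would fix a rose $R_r$ with a marking identifying $\pi_1(R_r)$ with $F_r$, and take any self homotopy equivalence $f_0 \from R_r \to R_r$ realizing $\varphi$. Since $\varphi$ is irreducible, I would argue that after a finite sequence of elementary moves (subdivisions, collapses of invariant subforests, and valence-one/valence-two homotopies), one can arrange for $f \from \Gamma \to \Gamma$ to have an irreducible transition matrix $T(f)$ on a connected rank $r$ graph $\Gamma$ with no vertices of valence less than $3$: if there were an invariant proper subgraph whose components were permuted, the fundamental groups of its components together with the complementary free factor would realize $\varphi$ as reducible, contradicting hypothesis. Applying Perron--Frobenius to the non-negative irreducible integer matrix $T(f)$ yields a real positive leading eigenvalue $\lambda_f \geq 1$, with a positive eigenvector $\vec v$; assigning $\ell(e_i) = v_i$ gives a length function on $\Gamma$ for which $f$ stretches edges on average by $\lambda_f$.

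The core of the proof is then the reduction to a train track map. I would carry out the Bestvina--Handel moves: if some $f^n$ fails to be locally injective on the interior of an edge, there is an illegal turn at a vertex $v$, meaning two edges $e, e'$ with $\iota(e)=\iota(e')=v$ whose first edges under $f$ coincide. Folding the common initial segment of $f(e)$ and $f(e')$ (in the sense of Stallings) produces a new graph map $f' \from \Gamma' \to \Gamma'$ still representing $\varphi$. One then checks, following Bestvina--Handel, that each such fold weakly decreases the Perron--Frobenius eigenvalue, and that any strictly complexity-preserving cycle of moves must eventually force $\lambda_{f'} < \lambda_f$ or terminate. The hard part is precisely this termination/monotonicity analysis: one must track not only the eigenvalue but a secondary invariant (for instance, lexicographic data coming from the gate structure at each vertex) to ensure the algorithm does not loop. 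Once the algorithm halts, no illegal turns remain, so $f$ is a train track map; irreducibility of $T(f)$ is preserved throughout (or can be restored by collapsing an invariant subgraph, which would again contradict irreducibility of $\varphi$). With the length function $\ell$ obtained from the Perron--Frobenius eigenvector above, uniform $\lambda_f$-expansion $\ell(f(e)) = \lambda_f \ell(e)$ is immediate from $T(f)\vec v = \lambda_f \vec v$ together with the absence of cancellation guaranteed by the train track property.

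Finally, I would identify $\lambda_f$ with the stretch factor $\lambda(\varphi)$ of the introduction: any cyclically reduced word $w$ is realized as an immersed loop in $\Gamma$, and since $f$ is a train track map, $f^n$ does not fold along this loop, so $|f^n(w)| / \ell(f^n(w))$ is bounded above and below by constants depending only on $\Gamma$. Thus $\lim\sup \|\varphi^n(w)\|^{1/n} = \lambda_f$ for every nontrivial $w$ detecting an edge, giving $\lambda(\varphi) = \lambda_f$. For the last statement, if $\lambda_f = 1$ then the non-negative integer eigenvector $\vec v$ forces $T(f)$ to be a permutation matrix (each row and column sums to $1$), so $f$ maps each edge to a single edge and is a graph automorphism of $\Gamma$; since $\Gamma$ is a finite graph, some power of $f$ is the identity, realizing $\varphi$ as a finite-order homeomorphism of $\Gamma$. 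Conversely, if $\varphi$ has a finite-order homeomorphism representative $g$ on a graph $\Gamma'$, then $T(g)$ is a permutation matrix with leading eigenvalue $1$, forcing $\lambda_f = 1$ as well since $\lambda_f$ equals the intrinsic stretch factor of $\varphi$.
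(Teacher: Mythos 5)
The paper quotes this theorem directly from Bestvina--Handel~\cite{bh92} and does not prove it, so there is no internal proof to compare against; your sketch is a reasonable high-level account of their algorithm. A few points deserve correction. The termination mechanism you gesture at (``lexicographic data coming from the gate structure'') is not how Bestvina and Handel close the argument: they bound the size of the transition matrix (using that the graph is connected of rank $r$ with all vertices of valence at least $3$, so $|\mathcal{E}\G|\le 3r-3$) and use that an irreducible non-negative integer matrix of bounded size has only finitely many possible Perron--Frobenius eigenvalues below a fixed threshold, since each such eigenvalue is an algebraic integer of bounded degree all of whose conjugates are bounded in modulus; together with the fact that each nontrivial move strictly decreases the PF eigenvalue, this forces termination. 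Your justification that $\lambda_f=1$ forces $T(f)$ to be a permutation matrix also misstates the ingredients: the Perron--Frobenius eigenvector $\vec v$ is a positive real vector, not an integer one. What one actually uses is that $T(f)$ is a non-negative irreducible \emph{integer} matrix whose row and column sums are each at least one (surjectivity and the valence condition), and then $T(f)\vec v=\vec v$ with $\vec v>0$ forces all row and column sums to equal one, whence $T(f)$ is a permutation matrix and $f$ is a graph automorphism.

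Finally, the claim that $\limsup\|\varphi^n(w)\|^{1/n}=\lambda_f$ for \emph{every} nontrivial $w$ detecting an edge is both stronger than needed and not obviously true for merely irreducible (rather than fully irreducible) $\varphi$: some conjugacy classes can grow at a strictly smaller exponential rate. Since $\lambda(\varphi)$ is a supremum, it suffices to establish the upper bound $\limsup\|\varphi^n(w)\|^{1/n}\le\lambda_f$ for all $w$ (via the length function together with bounded cancellation, since the train track property alone does not prevent cancellation at the vertices of an arbitrary immersed loop) and exact growth $\lambda_f^n$ for a single legal loop. These are sketch-level imprecisions rather than fatal gaps; the overall shape of the argument matches the cited reference.
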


However, it should be noted that while every irreducible outer automorphism has an i.t.t. representative, a given i.t.t. map could induce an outer automorphism which is reducible.\\

In \cite{AKRstretch}, Algom-Kfir and Rafi define \textit{mixing edges} and \textit{stacks} of graph maps. We recall their definitions here.

\begin{df}\label{mixing edge} \cite{AKRstretch} (Mixing Edge) Given a graph map $f: \Gamma_1 \rightarrow \Gamma_2$, an edge $e$ is called a \textit{mixing edge} if $f(e)$ is an edge path consisting of more than one edge. 
\end{df}

\begin{df}\label{surplus edge} (Surplus Edge)  Given a graph map $f: \Gamma_1 \rightarrow \Gamma_2$, an edge $e$ is called a \textit{surplus edge} if $e$ is non-mixing and $f(e) \in \{f(u), \overline{f(u)}\}$ for some edge $u \in \mathcal{E}\G_1$ with $ u \notin \{e ,\overline{e}\}$.  
\end{df}


\begin{df}\label{stack}  \cite{AKRstretch} (Stack) Given a self graph map $f :\G \rightarrow \G$, let $\sim$ be an equivalence relation on the edges of $\G$ generated by $e \sim f(e)$ if $e$ is non-mixing and non-surplus. An equivalence class of edges is called a \textit{stack} \footnote{This definition of stack differs slightly from that in \cite{AKRstretch}, as we allow $e \sim f(e)$ even if $f(e)$ appears in the image of a mixing edge.}. The stacks of $f$ partition $\mathcal{E}\G$.
\end{df}

 \begin{ex}\label{ahlp single fold} \vspace{-0.3cm}
 \begin{multicols}{2}
Let $\mathfrak{g}: \Delta_2^- \rightarrow \Delta_2^-$ be as pictured. This is an expanding i.t.t. map representing the fully irreducible outer automorphism $\varphi: x \mapsto y \mapsto z \mapsto zx^{-1}$, which has minimal stretch factor among fully irreducible elements of Out$(F_3)$ \cite{wiggd1}. $\mathfrak{g}$ has a single stack equal to $\mathcal{E} \Delta_2^-$ and a single mixing edge, $c_2$.  \\

\columnbreak 
\textcolor{white}{\ }\\
\vspace{-2.5cm}
    \tikzset{every picture/.style={line width=0.75pt}} 

\hspace{0.8cm}\begin{tikzpicture}[x=0.75pt,y=0.75pt,yscale=-1,xscale=1]

\draw    (11.76,103.93) .. controls (35.51,60.35) and (38.15,53.17) .. (59.37,16.36) ;
\draw [shift={(32.07,66.01)}, rotate = 297.85] [color={rgb, 255:red, 0; green, 0; blue, 0 }  ][line width=0.75]    (10.93,-3.29) .. controls (6.95,-1.4) and (3.31,-0.3) .. (0,0) .. controls (3.31,0.3) and (6.95,1.4) .. (10.93,3.29)   ;
\draw    (106.99,102.49) .. controls (55.35,104.85) and (55.35,104.85) .. (11.76,103.93) ;
\draw [shift={(66.05,104.21)}, rotate = 178.48] [color={rgb, 255:red, 0; green, 0; blue, 0 }  ][line width=0.75]    (10.93,-3.29) .. controls (6.95,-1.4) and (3.31,-0.3) .. (0,0) .. controls (3.31,0.3) and (6.95,1.4) .. (10.93,3.29)   ;
\draw    (59.37,16.36) .. controls (81.8,50.3) and (85.77,58.91) .. (106.99,102.49) ;
\draw [shift={(81.69,52.29)}, rotate = 61.28] [color={rgb, 255:red, 0; green, 0; blue, 0 }  ][line width=0.75]    (10.93,-3.29) .. controls (6.95,-1.4) and (3.31,-0.3) .. (0,0) .. controls (3.31,0.3) and (6.95,1.4) .. (10.93,3.29)   ;
\draw    (59.37,16.36) .. controls (99,28.5) and (116,58.5) .. (106.99,102.49) ;
\draw [shift={(98.54,42.84)}, rotate = 58.4] [color={rgb, 255:red, 0; green, 0; blue, 0 }  ][line width=0.75]    (10.93,-3.29) .. controls (6.95,-1.4) and (3.31,-0.3) .. (0,0) .. controls (3.31,0.3) and (6.95,1.4) .. (10.93,3.29)   ;
\draw    (11.76,103.93) .. controls (0,60.5) and (29,23.5) .. (59.37,16.36) ;
\draw [shift={(14.69,56.4)}, rotate = 295.27] [color={rgb, 255:red, 0; green, 0; blue, 0 }  ][line width=0.75]    (10.93,-3.29) .. controls (6.95,-1.4) and (3.31,-0.3) .. (0,0) .. controls (3.31,0.3) and (6.95,1.4) .. (10.93,3.29)   ;
\draw [line width=0.75]    (127,91.5) .. controls (258,166.5) and (244,-57.5) .. (123,35.5) ;
\draw [shift={(123,35.5)}, rotate = 322.45] [color={rgb, 255:red, 0; green, 0; blue, 0 }  ][line width=0.75]    (10.93,-3.29) .. controls (6.95,-1.4) and (3.31,-0.3) .. (0,0) .. controls (3.31,0.3) and (6.95,1.4) .. (10.93,3.29)   ;

\draw (54.8,82.02) node [anchor=north west][inner sep=0.75pt]  [font=\footnotesize]  {$a_{1}$};
\draw (66.87,49.92) node [anchor=north west][inner sep=0.75pt]  [font=\footnotesize]  {$c_{1}$};
\draw (43.22,50.36) node [anchor=north west][inner sep=0.75pt]  [font=\footnotesize]  {$b_{1}$};
\draw (4.22,27.96) node [anchor=north west][inner sep=0.75pt]  [font=\footnotesize]  {$b_{2}$};
\draw (101.58,28.08) node [anchor=north west][inner sep=0.75pt]  [font=\footnotesize]  {$c_{2}$};
\draw (144,20.4) node [anchor=north west][inner sep=0.75pt]  [font=\footnotesize]  {$ \begin{array}{l}
b_{1} \mapsto c_{1}\\
c_{1} \mapsto a_{1}\\
a_{1} \mapsto b_{2}\\
b_{2} \mapsto c_{2}\\
c_{2} \mapsto \overline{b_{1}}\overline{c_{1}}
\end{array}$};
\draw (228,48.4) node [anchor=north west][inner sep=0.75pt]    {$\mathfrak{g}$};

\end{tikzpicture}
  \end{multicols}
  \end{ex}

\vspace{-1cm}




\begin{df}\label{folds} \vspace{-0.6cm}(Folds) Given a directed graph $\Gamma$ and two edges $e_0,e_1 \in \mathcal{E}^{\pm}\G$ such that $\iota(e_0)=\iota(e_1)$, there are three procedures, called \textit{folds}, to form a new graph $\Gamma'$ and a surjective graph map $f:\Gamma \rightarrow \Gamma'$.  We describe these three types of folds first in terms of a procedure. Then, we give the equivalent definition of these folds in terms of a quotient graph and a quotient map. The latter definition is more standard, but the former definition determines our convention for labels on $\G'$. \\
\end{df}

 \begin{wrapfigure}{r}{4.5cm}
     \centering
    \tikzset{every picture/.style={line width=0.75pt}} 

\begin{tikzpicture}[x=0.75pt,y=0.75pt,yscale=-1,xscale=1]

\draw [color={rgb, 255:red, 125; green, 172; blue, 71 }  ,draw opacity=1 ][line width=1.5]    (27,64) -- (48,21) ;
\draw [shift={(33.64,50.41)}, rotate = 296.03] [color={rgb, 255:red, 125; green, 172; blue, 71 }  ,draw opacity=1 ][line width=1.5]    (14.21,-4.28) .. controls (9.04,-1.82) and (4.3,-0.39) .. (0,0) .. controls (4.3,0.39) and (9.04,1.82) .. (14.21,4.28)   ;
\draw [color={rgb, 255:red, 74; green, 144; blue, 226 }  ,draw opacity=1 ][line width=1.5]    (71,64) -- (48,21) ;
\draw [shift={(63.65,50.26)}, rotate = 241.86] [color={rgb, 255:red, 74; green, 144; blue, 226 }  ,draw opacity=1 ][line width=1.5]    (14.21,-4.28) .. controls (9.04,-1.82) and (4.3,-0.39) .. (0,0) .. controls (4.3,0.39) and (9.04,1.82) .. (14.21,4.28)   ;
\draw  [fill={rgb, 255:red, 0; green, 0; blue, 0 }  ,fill opacity=1 ] (44.8,22.65) .. controls (43.84,21.1) and (44.43,18.94) .. (46.11,17.82) .. controls (47.8,16.71) and (49.95,17.07) .. (50.91,18.62) .. controls (51.87,20.18) and (51.28,22.34) .. (49.6,23.46) .. controls (47.91,24.57) and (45.77,24.21) .. (44.8,22.65) -- cycle ;
\draw    (93.37,47.97) -- (120.49,47.97) ;
\draw [shift={(122.49,47.97)}, rotate = 180] [color={rgb, 255:red, 0; green, 0; blue, 0 }  ][line width=0.75]    (10.93,-3.29) .. controls (6.95,-1.4) and (3.31,-0.3) .. (0,0) .. controls (3.31,0.3) and (6.95,1.4) .. (10.93,3.29)   ;
\draw [color={rgb, 255:red, 0; green, 0; blue, 0 }  ,draw opacity=1 ][line width=1.5]    (141,64) -- (163,19) ;
\draw [shift={(148.13,49.41)}, rotate = 296.05] [color={rgb, 255:red, 0; green, 0; blue, 0 }  ,draw opacity=1 ][line width=1.5]    (14.21,-4.28) .. controls (9.04,-1.82) and (4.3,-0.39) .. (0,0) .. controls (4.3,0.39) and (9.04,1.82) .. (14.21,4.28)   ;
\draw [color={rgb, 255:red, 0; green, 0; blue, 0 }  ,draw opacity=1 ][line width=1.5]    (187,64) -- (141,64) ;
\draw [shift={(172.8,64)}, rotate = 180] [color={rgb, 255:red, 0; green, 0; blue, 0 }  ,draw opacity=1 ][line width=1.5]    (14.21,-4.28) .. controls (9.04,-1.82) and (4.3,-0.39) .. (0,0) .. controls (4.3,0.39) and (9.04,1.82) .. (14.21,4.28)   ;
\draw [color={rgb, 255:red, 74; green, 144; blue, 226 }  ,draw opacity=0.8 ][line width=1.5]    (163.27,22.68) .. controls (141.13,71.44) and (137.64,61.81) .. (186.56,62.41) ;
\draw [color={rgb, 255:red, 125; green, 172; blue, 71 }  ,draw opacity=0.77 ][line width=1.5]    (163.27,17.86) -- (141.13,62.41) ;
\draw  [fill={rgb, 255:red, 0; green, 0; blue, 0 }  ,fill opacity=1 ] (68.1,66) .. controls (67.14,64.44) and (67.73,62.28) .. (69.41,61.17) .. controls (71.1,60.06) and (73.24,60.41) .. (74.21,61.97) .. controls (75.17,63.53) and (74.58,65.69) .. (72.9,66.8) .. controls (71.21,67.91) and (69.06,67.56) .. (68.1,66) -- cycle ;
\draw  [fill={rgb, 255:red, 0; green, 0; blue, 0 }  ,fill opacity=1 ] (23.83,66) .. controls (22.87,64.44) and (23.46,62.28) .. (25.15,61.17) .. controls (26.83,60.06) and (28.98,60.41) .. (29.94,61.97) .. controls (30.9,63.53) and (30.31,65.69) .. (28.63,66.8) .. controls (26.94,67.91) and (24.8,67.56) .. (23.83,66) -- cycle ;
\draw  [fill={rgb, 255:red, 0; green, 0; blue, 0 }  ,fill opacity=1 ] (138,66) .. controls (137.04,64.44) and (137.63,62.28) .. (139.31,61.17) .. controls (141,60.06) and (143.14,60.41) .. (144.1,61.97) .. controls (145.07,63.53) and (144.48,65.69) .. (142.79,66.8) .. controls (141.11,67.91) and (138.96,67.56) .. (138,66) -- cycle ;
\draw  [fill={rgb, 255:red, 0; green, 0; blue, 0 }  ,fill opacity=1 ] (160.14,21.45) .. controls (159.17,19.9) and (159.76,17.73) .. (161.45,16.62) .. controls (163.13,15.51) and (165.28,15.87) .. (166.24,17.42) .. controls (167.2,18.98) and (166.61,21.14) .. (164.93,22.25) .. controls (163.24,23.36) and (161.1,23.01) .. (160.14,21.45) -- cycle ;
\draw  [fill={rgb, 255:red, 0; green, 0; blue, 0 }  ,fill opacity=1 ] (183.44,66) .. controls (182.47,64.44) and (183.06,62.28) .. (184.75,61.17) .. controls (186.43,60.06) and (188.58,60.41) .. (189.54,61.97) .. controls (190.5,63.53) and (189.91,65.69) .. (188.23,66.8) .. controls (186.54,67.91) and (184.4,67.56) .. (183.44,66) -- cycle ;

\draw (20.49,18.64) node [anchor=north west][inner sep=0.75pt]  [font=\normalsize,color={rgb, 255:red, 65; green, 117; blue, 5 }  ,opacity=1 ]  {$e_{0}$};
\draw (59.87,19.05) node [anchor=north west][inner sep=0.75pt]  [font=\normalsize,color={rgb, 255:red, 56; green, 109; blue, 172 }  ,opacity=1 ]  {$e_{1}$};
\draw (135.33,20.97) node [anchor=north west][inner sep=0.75pt]  [font=\normalsize]  {$e_{0}$};
\draw (153.15,68.81) node [anchor=north west][inner sep=0.75pt]  [font=\normalsize]  {$e_{1} '$};

\end{tikzpicture}
  \end{wrapfigure}
\noindent (i) (Proper Full Fold)  Let $\G'$ be the graph with $\mathcal{V}\G' = \mathcal{V}\G$ and $\mathcal{E}~\hspace{-0.17cm}\G'~ =~(\mathcal{E}\G~-~\{e_1\})~\cup~\{e_1'\}$, where $e_1'$ has $\iota(e_1'):=\tau(e_0)$ and $\tau(e_1'):=\tau(e_1)$. Let $f: \G \rightarrow \G'$ be given by:
$$\hspace{-7cm} f(e) = \begin{cases}e_0e_1' & \text{if }e=e_1 \\ e & \text{otherwise} \end{cases}$$
$f$ is called the \textit{proper full fold of $e_1$ over $e_0$.} Equivalently, subdivide $e_1 \in \mathcal{E}\G$: let $v'$ be a new vertex in the middle of $e_1$ and relabel $e_1$ as two edges $e_1''$ and $e_1'$, oriented so that $e_1$ is now equal to the edge path $e_1''e_1'$. Now, let $\G' = \G / e_1'' \sim e_0$, and let $f:\G \rightarrow \G'$ be the quotient map. \\

 \begin{wrapfigure}{r}{4.5cm}
     \centering
    \tikzset{every picture/.style={line width=0.75pt}} 

\begin{tikzpicture}[x=0.75pt,y=0.75pt,yscale=-1,xscale=1]

\draw [color={rgb, 255:red, 0; green, 0; blue, 0 }  ,draw opacity=1 ][line width=1.5]    (143,63.5) -- (165,22.5) ;
\draw [shift={(149.84,50.75)}, rotate = 298.22] [color={rgb, 255:red, 0; green, 0; blue, 0 }  ,draw opacity=1 ][line width=1.5]    (14.21,-4.28) .. controls (9.04,-1.82) and (4.3,-0.39) .. (0,0) .. controls (4.3,0.39) and (9.04,1.82) .. (14.21,4.28)   ;
\draw [color={rgb, 255:red, 125; green, 172; blue, 71 }  ,draw opacity=0.77 ][line width=1.5]    (165.38,20.3) -- (141.89,63.28) ;
\draw [color={rgb, 255:red, 74; green, 144; blue, 226 }  ,draw opacity=0.85 ][line width=1.5]    (166.62,22.62) -- (143.13,65.6) ;
\draw  [fill={rgb, 255:red, 0; green, 0; blue, 0 }  ,fill opacity=1 ] (139.8,65.57) .. controls (138.82,64.06) and (139.46,61.95) .. (141.25,60.88) .. controls (143.04,59.81) and (145.29,60.17) .. (146.28,61.69) .. controls (147.27,63.21) and (146.62,65.31) .. (144.83,66.38) .. controls (143.05,67.46) and (140.79,67.09) .. (139.8,65.57) -- cycle ;
\draw  [fill={rgb, 255:red, 0; green, 0; blue, 0 }  ,fill opacity=1 ] (162.06,24.92) .. controls (161.07,23.4) and (161.72,21.3) .. (163.51,20.22) .. controls (165.3,19.15) and (167.55,19.51) .. (168.54,21.03) .. controls (169.53,22.55) and (168.88,24.65) .. (167.09,25.73) .. controls (165.3,26.8) and (163.05,26.44) .. (162.06,24.92) -- cycle ;
\draw    (89.96,47.39) -- (115.66,47.39) ;
\draw [shift={(117.66,47.39)}, rotate = 180] [color={rgb, 255:red, 0; green, 0; blue, 0 }  ][line width=0.75]    (10.93,-3.29) .. controls (6.95,-1.4) and (3.31,-0.3) .. (0,0) .. controls (3.31,0.3) and (6.95,1.4) .. (10.93,3.29)   ;
\draw [color={rgb, 255:red, 125; green, 172; blue, 71 }  ,draw opacity=1 ][line width=1.5]    (27.51,64.76) -- (48.51,21.76) ;
\draw [shift={(34.15,51.17)}, rotate = 296.03] [color={rgb, 255:red, 125; green, 172; blue, 71 }  ,draw opacity=1 ][line width=1.5]    (14.21,-4.28) .. controls (9.04,-1.82) and (4.3,-0.39) .. (0,0) .. controls (4.3,0.39) and (9.04,1.82) .. (14.21,4.28)   ;
\draw [color={rgb, 255:red, 74; green, 144; blue, 226 }  ,draw opacity=1 ][line width=1.5]    (71.51,64.76) -- (48.51,21.76) ;
\draw [shift={(64.16,51.02)}, rotate = 241.86] [color={rgb, 255:red, 74; green, 144; blue, 226 }  ,draw opacity=1 ][line width=1.5]    (14.21,-4.28) .. controls (9.04,-1.82) and (4.3,-0.39) .. (0,0) .. controls (4.3,0.39) and (9.04,1.82) .. (14.21,4.28)   ;
\draw  [fill={rgb, 255:red, 0; green, 0; blue, 0 }  ,fill opacity=1 ] (45.31,23.41) .. controls (44.35,21.86) and (44.94,19.7) .. (46.62,18.58) .. controls (48.31,17.47) and (50.45,17.83) .. (51.41,19.38) .. controls (52.38,20.94) and (51.79,23.1) .. (50.1,24.22) .. controls (48.42,25.33) and (46.27,24.97) .. (45.31,23.41) -- cycle ;
\draw  [fill={rgb, 255:red, 0; green, 0; blue, 0 }  ,fill opacity=1 ] (68.61,66.76) .. controls (67.65,65.2) and (68.24,63.04) .. (69.92,61.93) .. controls (71.61,60.81) and (73.75,61.17) .. (74.71,62.73) .. controls (75.67,64.28) and (75.09,66.45) .. (73.4,67.56) .. controls (71.72,68.67) and (69.57,68.31) .. (68.61,66.76) -- cycle ;
\draw  [fill={rgb, 255:red, 0; green, 0; blue, 0 }  ,fill opacity=1 ] (24.34,66.76) .. controls (23.38,65.2) and (23.97,63.04) .. (25.65,61.93) .. controls (27.34,60.81) and (29.48,61.17) .. (30.44,62.73) .. controls (31.41,64.28) and (30.82,66.45) .. (29.13,67.56) .. controls (27.45,68.67) and (25.3,68.31) .. (24.34,66.76) -- cycle ;

\draw (132.34,20.99) node [anchor=north west][inner sep=0.75pt]  [font=\normalsize]  {$e_{0} '$};
\draw (21,19.4) node [anchor=north west][inner sep=0.75pt]  [font=\normalsize,color={rgb, 255:red, 65; green, 117; blue, 5 }  ,opacity=1 ]  {$e_{0}$};
\draw (60.38,19.81) node [anchor=north west][inner sep=0.75pt]  [font=\normalsize,color={rgb, 255:red, 56; green, 109; blue, 172 }  ,opacity=1 ]  {$e_{1}$};

\end{tikzpicture}
  \end{wrapfigure}
\noindent (ii) (Complete Fold) Let $\G'$ be the graph resulting from identifying the vertices $\iota(e_0)$ and $\iota(e_1)$ and identifying the edges $e_0$ and $e_1$ into a new edge labelled $e_0'$. Let $f: \G \rightarrow \G'$ be given by:
$$\hspace{-7cm} f(e) = \begin{cases}e_0' & \text{if }e\in\{e_0,e_1\} \\ e & \text{otherwise} \end{cases}$$
$f$ is called the \textit{complete fold of $e_1$ and $e_0$}.  Equivalently, let $\G' = \G / e_1 \sim e_0$, and let $f:\G \rightarrow \G'$ be the quotient map. If $f$ is a fold in a fold decomposition of a homotopy equivalence, then $\tau(e_0) \neq \tau(e_1)$.\\

 \begin{wrapfigure}{r}{4.5cm}
     \centering
    \tikzset{every picture/.style={line width=0.75pt}} 

\begin{tikzpicture}[x=0.75pt,y=0.75pt,yscale=-1,xscale=1]

\draw [color={rgb, 255:red, 0; green, 0; blue, 0 }  ,draw opacity=1 ][line width=1.5]    (159,40.5) -- (170,16.5) ;
\draw [shift={(161.33,35.41)}, rotate = 294.62] [color={rgb, 255:red, 0; green, 0; blue, 0 }  ,draw opacity=1 ][line width=1.5]    (11.37,-3.42) .. controls (7.23,-1.45) and (3.44,-0.31) .. (0,0) .. controls (3.44,0.31) and (7.23,1.45) .. (11.37,3.42)   ;
\draw [color={rgb, 255:red, 0; green, 0; blue, 0 }  ,draw opacity=1 ][line width=1.5]    (147,67.5) -- (158,41.5) ;
\draw [shift={(149.54,61.5)}, rotate = 292.93] [color={rgb, 255:red, 0; green, 0; blue, 0 }  ,draw opacity=1 ][line width=1.5]    (11.37,-3.42) .. controls (7.23,-1.45) and (3.44,-0.31) .. (0,0) .. controls (3.44,0.31) and (7.23,1.45) .. (11.37,3.42)   ;
\draw [color={rgb, 255:red, 0; green, 0; blue, 0 }  ,draw opacity=1 ][line width=1.5]    (195,65.5) -- (159,41.5) ;
\draw [shift={(183.82,58.05)}, rotate = 213.69] [color={rgb, 255:red, 0; green, 0; blue, 0 }  ,draw opacity=1 ][line width=1.5]    (12.79,-3.85) .. controls (8.13,-1.64) and (3.87,-0.35) .. (0,0) .. controls (3.87,0.35) and (8.13,1.64) .. (12.79,3.85)   ;
\draw  [fill={rgb, 255:red, 0; green, 0; blue, 0 }  ,fill opacity=1 ] (155.58,42.97) .. controls (154.17,41.48) and (154.43,39.26) .. (156.16,38.02) .. controls (157.89,36.77) and (160.43,36.97) .. (161.83,38.47) .. controls (163.24,39.96) and (162.98,42.17) .. (161.25,43.42) .. controls (159.52,44.66) and (156.98,44.46) .. (155.58,42.97) -- cycle ;
\draw [color={rgb, 255:red, 125; green, 172; blue, 71 }  ,draw opacity=0.77 ][line width=1.5]    (169.46,15.53) -- (147.78,63.06) ;
\draw [color={rgb, 255:red, 74; green, 144; blue, 226 }  ,draw opacity=0.8 ][line width=1.5]    (169.6,15.53) .. controls (155.91,45.08) and (162.75,41.22) .. (192.42,64.34) ;
\draw  [fill={rgb, 255:red, 0; green, 0; blue, 0 }  ,fill opacity=1 ] (143.57,69.46) .. controls (142.63,67.8) and (143.2,65.49) .. (144.85,64.3) .. controls (146.5,63.11) and (148.6,63.5) .. (149.55,65.16) .. controls (150.49,66.82) and (149.92,69.12) .. (148.27,70.31) .. controls (146.61,71.5) and (144.51,71.11) .. (143.57,69.46) -- cycle ;
\draw  [fill={rgb, 255:red, 0; green, 0; blue, 0 }  ,fill opacity=1 ] (166.39,18.07) .. controls (165.45,16.41) and (166.03,14.1) .. (167.68,12.92) .. controls (169.33,11.73) and (171.43,12.11) .. (172.37,13.77) .. controls (173.31,15.43) and (172.74,17.74) .. (171.09,18.93) .. controls (169.44,20.11) and (167.34,19.73) .. (166.39,18.07) -- cycle ;
\draw  [fill={rgb, 255:red, 0; green, 0; blue, 0 }  ,fill opacity=1 ] (191.5,67.9) .. controls (190.56,66.24) and (191.13,63.94) .. (192.78,62.75) .. controls (194.43,61.56) and (196.53,61.95) .. (197.48,63.6) .. controls (198.42,65.26) and (197.84,67.57) .. (196.19,68.76) .. controls (194.54,69.95) and (192.44,69.56) .. (191.5,67.9) -- cycle ;
\draw    (90.1,45.28) -- (116.28,45.28) ;
\draw [shift={(118.28,45.28)}, rotate = 180] [color={rgb, 255:red, 0; green, 0; blue, 0 }  ][line width=0.75]    (10.93,-3.29) .. controls (6.95,-1.4) and (3.31,-0.3) .. (0,0) .. controls (3.31,0.3) and (6.95,1.4) .. (10.93,3.29)   ;
\draw [color={rgb, 255:red, 125; green, 172; blue, 71 }  ,draw opacity=1 ][line width=1.5]    (27.51,63.76) -- (48.51,20.76) ;
\draw [shift={(34.15,50.17)}, rotate = 296.03] [color={rgb, 255:red, 125; green, 172; blue, 71 }  ,draw opacity=1 ][line width=1.5]    (14.21,-4.28) .. controls (9.04,-1.82) and (4.3,-0.39) .. (0,0) .. controls (4.3,0.39) and (9.04,1.82) .. (14.21,4.28)   ;
\draw [color={rgb, 255:red, 74; green, 144; blue, 226 }  ,draw opacity=1 ][line width=1.5]    (71.51,63.76) -- (48.51,20.76) ;
\draw [shift={(64.16,50.02)}, rotate = 241.86] [color={rgb, 255:red, 74; green, 144; blue, 226 }  ,draw opacity=1 ][line width=1.5]    (14.21,-4.28) .. controls (9.04,-1.82) and (4.3,-0.39) .. (0,0) .. controls (4.3,0.39) and (9.04,1.82) .. (14.21,4.28)   ;
\draw  [fill={rgb, 255:red, 0; green, 0; blue, 0 }  ,fill opacity=1 ] (45.31,22.41) .. controls (44.35,20.86) and (44.94,18.7) .. (46.62,17.58) .. controls (48.31,16.47) and (50.45,16.83) .. (51.41,18.38) .. controls (52.38,19.94) and (51.79,22.1) .. (50.1,23.22) .. controls (48.42,24.33) and (46.27,23.97) .. (45.31,22.41) -- cycle ;
\draw  [fill={rgb, 255:red, 0; green, 0; blue, 0 }  ,fill opacity=1 ] (68.61,65.76) .. controls (67.65,64.2) and (68.24,62.04) .. (69.92,60.93) .. controls (71.61,59.81) and (73.75,60.17) .. (74.71,61.73) .. controls (75.67,63.28) and (75.09,65.45) .. (73.4,66.56) .. controls (71.72,67.67) and (69.57,67.31) .. (68.61,65.76) -- cycle ;
\draw  [fill={rgb, 255:red, 0; green, 0; blue, 0 }  ,fill opacity=1 ] (24.34,65.76) .. controls (23.38,64.2) and (23.97,62.04) .. (25.65,60.93) .. controls (27.34,59.81) and (29.48,60.17) .. (30.44,61.73) .. controls (31.41,63.28) and (30.82,65.45) .. (29.13,66.56) .. controls (27.45,67.67) and (25.3,67.31) .. (24.34,65.76) -- cycle ;

\draw (143.52,11.91) node [anchor=north west][inner sep=0.75pt]  [font=\normalsize]  {$e_{0} '$};
\draw (126.85,38.87) node [anchor=north west][inner sep=0.75pt]  [font=\normalsize]  {$e_{0} ''$};
\draw (184.46,36.03) node [anchor=north west][inner sep=0.75pt]  [font=\normalsize]  {$e_{1} '$};
\draw (21,18.4) node [anchor=north west][inner sep=0.75pt]  [font=\normalsize,color={rgb, 255:red, 65; green, 117; blue, 5 }  ,opacity=1 ]  {$e_{0}$};
\draw (60.38,18.81) node [anchor=north west][inner sep=0.75pt]  [font=\normalsize,color={rgb, 255:red, 56; green, 109; blue, 172 }  ,opacity=1 ]  {$e_{1}$};

\end{tikzpicture}
  \end{wrapfigure}
\noindent (iii) (Partial Fold) Let $\G'$ be the graph with $\mathcal{V}\G' = \mathcal{V}\G \cup \{v'\}$ and $\mathcal{E}\G'~=~(\mathcal{E}\G~-~\{e_0,e_1\})~\cup~\{e_0',e_0'',e_1'\}$, where $e_0'$ joins $\iota(e_0)$ to $v'$, $e_0''$ joins $v'$ to $\tau(e_0)$, and $e_1'$ joins $v'$ to $\tau(e_1)$. Let $f: \G \rightarrow \G'$ be given by:
$$\hspace{-7cm} f(e) = \begin{cases}e_0'e_0'' & \text{if }e=e_0 \\ e_0'e_1' & \text{if }e=e_1 \\ e & \text{otherwise} \end{cases}$$
$f$ is called the \textit{partial fold of $e_1$ over $e_0$.} Equivalently,  subdivide $e_0 \in \mathcal{E}\G$: let $v'$ be a new vertex in the middle of $e_0$ and relabel $e_0$ as two edges $e_0'$ and $e_0''$, oriented so that $e_0$ is now equal to the edge path $e_0'e_0''$.  Subdivide $e_1 \in \mathcal{E}\G$: let $v''$ be a new vertex in the middle of $e_1$ and relabel $e_1$ as two edges $e_1''$ and $e_1'$, oriented so that $e_1'$ is now equal to the edge path $e_1''e_1'$. Now, let $\G' = \G / e_1'' \sim e_0'$, and let $f:\G \rightarrow \G'$ be the quotient map. \\


\begin{thm}[\cite{s83}]\label{stallings} Every surjective homotopy equivalence graph map $f:\G \rightarrow \G'$ can be decomposed as $f = h \circ f_m \circ \dots \circ f_2 \circ f_1$ where $\Gamma_1 = \Gamma$, each $f_i : \Gamma_i \rightarrow \Gamma_{i+1}$ is a fold, and $h:\Gamma_{m+1} \rightarrow \Gamma'$ is an graph isomorphism. \end{thm}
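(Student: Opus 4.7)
The plan is to prove this by strong induction on the complexity $C(f) := \sum_{e \in \mathcal{E}\Gamma} |f(e)|$, a positive integer. The base case is when $f$ is itself a graph isomorphism: take $m = 0$ and $h := f$. For the inductive step I will exhibit a single fold $f_1 : \Gamma \to \Gamma_2$ together with a factorization $f = g \circ f_1$, where $g : \Gamma_2 \to \Gamma'$ is again a surjective homotopy equivalence graph map with $C(g) < C(f)$; applying the inductive hypothesis to $g$ then yields the full decomposition $f = h \circ f_m \circ \cdots \circ f_1$.

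To construct $f_1$, I locate a \emph{fold site}: a pair of distinct oriented edges $e_0, e_1 \in \mathcal{E}^\pm \Gamma$ with $\iota(e_0) = \iota(e_1)$ whose image paths $f(e_0), f(e_1)$ share a common first edge $e' \in \mathcal{E}^\pm \Gamma'$. Splitting into the trichotomy of Definition 2.15, I take $f_1$ to be: (a) the complete fold of $e_0, e_1$ when $f(e_0) = f(e_1) = e'$; (b) a proper full fold of $e_1$ over $e_0$ when exactly one image equals $e'$ (after relabeling, $f(e_0) = e'$ and $f(e_1) = e' u$ with $|u| \geq 1$); (c) a partial fold of $e_1$ over $e_0$ when both images properly extend $e'$, writing $f(e_0) = e' u_0$ and $f(e_1) = e' u_1$ with $|u_0|, |u_1| \geq 1$. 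In each case the residual map $g : \Gamma_2 \to \Gamma'$ is forced by $f = g \circ f_1$: $g$ sends the newly created edges in $\Gamma_2$ to the suffixes $u, u_0, u_1$ (or to $e'$ itself in case (a)) and coincides with $f$ on all unaffected edges. A direct calculation gives $C(g) = C(f) - 1$; surjectivity of $g$ is inherited from that of $f$, and the homotopy-equivalence of $g$ follows from $f = g \circ f_1$ together with the fact that every fold is a homotopy equivalence.

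The main obstacle will be proving that a fold site necessarily exists whenever $f$ is not already a graph isomorphism. My plan is to argue the contrapositive: absent a fold site, at every $v \in \mathcal{V}\Gamma$ the outgoing edges must have pairwise distinct first-image edges, so $f$ is a combinatorial immersion. A short subsidiary argument — leveraging the valence assumptions on $\Gamma$ and $\Gamma'$, together with surjectivity and the $\pi_1$-isomorphism property — is then needed to rule out the pathology that $f$ could be locally injective yet send some edge to a path of length $>1$; the idea is that an interior vertex of $\Gamma'$ passed through by some $f(e)$ would force a mismatch in local valences between its preimage interior point in $\Gamma$ and the star at that vertex in $\Gamma'$. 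Once each $|f(e)| = 1$ and $f$ is locally injective on stars, $f$ is a combinatorial covering, and being a $\pi_1$-isomorphism promotes this to a degree-one covering, hence a graph isomorphism. This final immersion-upgrade step is where I expect the most delicate bookkeeping.
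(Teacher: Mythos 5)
The paper does not prove this statement; it is cited directly to Stallings \cite{s83} and used as a black box. So there is no ``paper's own proof'' to compare against, and I can only evaluate your sketch on its merits. The complexity induction is the right idea and the core bookkeeping is correct: taking $C(f)=\sum_e |f(e)|$, each of the three cases of a fold site drops $C$ by exactly $1$ (in case (a) the two unit-length preimages merge to one; in case (b) the new edge $e_1'$ carries the suffix $u$; in case (c) the new edges $e_0',e_0'',e_1'$ carry $e',u_0,u_1$). One small item you should flag explicitly when you set up case (a): the requirement $\tau(e_0)\neq\tau(e_1)$ from Definition~\ref{folds} is automatic here, since otherwise $e_0\overline{e_1}$ would be an essential loop killed by $f$, contradicting that $f$ is a homotopy equivalence. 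You should also rule out a fold site of the form $e_1=\overline{e_0}$ with $e_0$ a loop, which your ``distinct oriented edges'' condition does not exclude.

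The genuine gap is the existence-of-a-fold-site step, and your proposed route to close it will not work as stated. The theorem has no valence hypothesis --- Definition~\ref{irreducible graph map} imposes valence $\geq 3$ only for \emph{irreducible} graph maps, but Theorem~\ref{stallings} is asserted for arbitrary surjective homotopy equivalence graph maps --- so the ``interior point of $e$ forces a valence mismatch at its image'' argument has nothing to lever against (the image vertex could simply have valence $2$). The standard way to make this rigorous is to pass to the subdivision $\hat\Gamma$ in which each edge $e$ is cut into $|f(e)|$ pieces, so $\hat f:\hat\Gamma\to\Gamma'$ sends edges to edges; absence of a fold site for $f$ is equivalent to $\hat f$ being an immersion and $f(e)$ reduced for each $e$. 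Then one invokes Stallings' factorization of an immersion into an embedding followed by a covering: since $\hat f_* = f_*$ is a $\pi_1$-isomorphism, the covering factor has degree one, and surjectivity forces the embedding to be onto, so $\hat f$ is an isomorphism, whence every $|f(e)|=1$ and $f$ itself is an isomorphism. That factorization is itself a nontrivial lemma (proved in \cite{s83} by folding, or via fiber products), not ``bookkeeping,'' so as written your induction is circular at exactly the step you flag as delicate. If you replace the valence argument by the subdivision-plus-factorization argument, the proof goes through.
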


\noindent In particular, i.h.e. maps are surjective, and thus have such a fold decomposition. For instance, Example \ref{ahlp single fold} can be decomposed as a single proper full fold of $c_2$ over $\overline{b_1}$ and a graph isomorphism:\\

\begin{center}
\input{figures/single_fold_decomp}
\end{center}

\vspace{0.1cm}
\noindent We collect some known observations in the following lemma. 

\begin{lem}\label{helpful facts} Suppose $f:\G \rightarrow \G$ is an i.h.e. graph map with fold decomposition consisting of $m$ folds and a graph isomorphism $h:\G' \rightarrow \G$. Let $\lambda_f$ denote the greatest eigenvalue of $T(f)$ in modulus. Then there is a choice of positive length $\ell$ on each edge in $\G$ such that for every $e \in \mathcal{E}\G$, we have $\ell(f(e))=\lambda_f \ell(e)$ where $\ell(u) := \sum_{i=1}^k \ell(b_i)$ when $u=b_1b_2 \dots b_k$ is an edge path. Moreover, the following are equivalent: 
\begin{itemize}
\item[(i)] $m = 0$,
\item[(ii)]  there is a power $n \in \mathbb{N}$ such that $f^n$ is the identity on $\G$,
\item[(iii)] $\lambda_f = 1$,
\item[(iv)] $f$ is not expanding.

\end{itemize}
\end{lem}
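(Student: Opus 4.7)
For the length function, I would take $\ell$ to be a Perron--Frobenius eigenvector of $T(f)$. Since $f$ is irreducible, $T(f)$ is a non-negative irreducible matrix, so by Perron--Frobenius its spectral radius $\lambda_f$ is a positive eigenvalue carrying a strictly positive right eigenvector $v$. Setting $\ell(e_i) := v_i$ translates the eigenvalue equation $T(f) v = \lambda_f v$ coordinate-wise into $\ell(f(e_i)) = \lambda_f \ell(e_i)$; extending $\ell$ additively to edge paths then yields $\ell(f^n(e)) = \lambda_f^n \ell(e)$ for every $e$ and $n$.

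To prove the equivalences, the plan is the cycle $(i) \Rightarrow (ii) \Rightarrow (iii) \Rightarrow (i)$ together with $(iii) \Leftrightarrow (iv)$. Three of these are short: $(i) \Rightarrow (ii)$ because when $m = 0$ the map $f = h$ is an automorphism of a finite graph, hence has finite order; $(ii) \Rightarrow (iii)$ because $f^n = \mathrm{id}$ combined with $\ell(f^n(e)) = \lambda_f^n \ell(e)$ forces $\lambda_f^n = 1$, so $\lambda_f = 1$; and $(iii) \Rightarrow (iv)$ because if $\lambda_f = 1$ then $|f^n(e)| \le \ell(e)/\min_{e'} \ell(e')$ is bounded in $n$. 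For $(iv) \Rightarrow (iii)$, if $|f^n(e_0)|$ is bounded for some $e_0$, then the irreducibility of $T(f)$ makes $|f^n(e)|$ bounded for every $e$ (every edge appears as a sub-edge of some forward image of $e_0$), which forces $\lambda_f \le 1$; the Perron--Frobenius lower bound $\lambda_f \ge \min_i |f(e_i)| \ge 1$ then gives equality.

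The core step is $(iii) \Rightarrow (i)$: turning the analytic equality $\lambda_f = 1$ into the combinatorial statement $m = 0$. The plan is a global counting argument. Summing $\ell(f(e)) = \ell(e)$ over all edges $e$ gives $\sum_j c_j \ell(e_j) = \sum_j \ell(e_j)$, where $c_j$ is the $j$-th column sum of $T(f)$. A case-by-case inspection of the three fold types shows each $f_i$ is edge-surjective (every edge of $\G_{i+1}$ lies in the image of some edge of $\G_i$), so $f = h \circ f_m \circ \cdots \circ f_1$ is edge-surjective and $c_j \ge 1$ for all $j$. Positivity of $\ell$ then forces $c_j = 1$ for all $j$, and comparing $\sum_j c_j = \sum_e |f(e)|$ with $|f(e)| \ge 1$ forces $|f(e)| = 1$ for every $e$. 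Thus $f$ is a bijection on $\mathcal{E}\G$, hence a graph automorphism, so its fold decomposition has $m = 0$.

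The main obstacle is this last step: once the edge-surjectivity of each fold is observed, the counting argument closes itself, but all of the work is in extracting combinatorial rigidity from a single scalar equation. Everything else reduces to standard Perron--Frobenius bookkeeping applied to the length function.
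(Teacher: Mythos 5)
Your proposal is correct, and it diverges from the paper's argument in an interesting way. The paper closes the cycle of equivalences at $(iv)\Rightarrow(i)$, proved by contrapositive: if $m>0$, then the decomposition cannot consist solely of complete folds (since $h\colon\G'\to\G$ is an isomorphism and complete folds strictly decrease vertex count), so some edge has $|f(b)|>1$; irreducibility then propagates growth to every edge, so $f$ is expanding. You instead close the cycle at $(iii)\Rightarrow(i)$ with a global counting argument: summing $\ell(f(e))=\ell(e)$ over edges gives $\sum_j (c_j-1)\ell(e_j)=0$ where $c_j$ is the $j$-th column sum of $T(f)$; edge-surjectivity of $f$ (which in fact also follows directly from irreducibility of $T(f)$, without inspecting fold types) gives $c_j\ge 1$, and positivity of $\ell$ forces $c_j=1$ for all $j$, hence $\sum_i|f(e_i)|=\sum_j c_j=n$ and so $|f(e_i)|=1$ for every $i$. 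You then add the separate implication $(iv)\Rightarrow(iii)$ via boundedness of $|f^n(e)|$ and the Perron--Frobenius row-sum bound $\lambda_f\ge 1$. Both routes work; yours leans harder on the eigenvector structure of $T(f)$ and extracts more from the metric, while the paper leans on the combinatorics of the fold decomposition. One small step to flesh out: your final sentence ``$f$ is a graph automorphism, so its fold decomposition has $m=0$'' deserves a word of justification, since the lemma speaks of a given decomposition $h\circ f_m\circ\dots\circ f_1$; the cleanest way to close it is to observe that every fold is topologically non-injective, so a composition containing a fold cannot be injective — or equivalently, to reuse the paper's observation that $m>0$ with $h$ an isomorphism forces some non-complete fold and hence some $|f(e)|\ge 2$, contradicting what you have just derived.
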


\begin{proof}
 Suppose $T(f)$ is the transition matrix of $f$ with respect to an edge ordering $(e_1,\dots,e_n)$. Since $T(f)$ is irreducible, the Perron--Frobenius Theorem guarantees there is a left eigenvector $\vec{v}$ with positive entries such that $\vec{v} \ T(f)=\lambda_f \vec{v}$. Use the entries of $\vec{v}= [v_1,\dots,v_n]$ to assign the length $v_i$ to the corresponding edge $e_i$. Letting $a_i^1,\dots,a_i^n$ denote the entries of the $i-$th column of $T(f)$, we have
 \begin{align*}
 \ell(f(e_i)) & = \sum_{j=1}^{n} a_i^j \ell(e_j) \\
 & = \sum_{j=1}^{n} a_i^j v_j \\
 & = \lambda_f v_i.
 \end{align*}
 Hence $\ell(f(e)) = \lambda_f \ell(e)$ for each $e \in \mathcal{E}\G$. 
\begin{itemize} 
\item[(i) $\Rightarrow$ (ii):]  Suppose $m=0$. Then $f$ is a graph isomorphism and hence a bijection on the set of oriented edges of $\G$.  Thus there is a power $n$ such that $f^n$ is equal to the identity. \\

\item[(ii) $\Rightarrow$ (iii):] If $f^n$ is the identity, then  $(\lambda_f)^n=1$, so $|\lambda_f|=1$. The Perron--Frobenius theorem guarantees $\lambda_f$ is real, positive and greater than or equal to $1$. Thus $\lambda_f =1$. \\

\item[(iii) $\Rightarrow$ (iv):] Now suppose $\lambda_f=1$. Thus $\ell(f^n(e)) = \ell(e)$ for each $e \in \mathcal{E}\G$ and power $n \in \mathbb{N}$. Since the length of each edge is positive, $|f^n(e)|$ is bounded from above for all $n \in \mathbb{N}$. Hence $f$ is not expanding.\\

\item[(iv) $\Rightarrow$ (i):] Proceeding by contrapositive, suppose $m >0$.  If the fold decomposition consisted of only complete folds, then $|\mathcal{V}\G'|<|\mathcal{V}\G|$, contradicting that $h:\G' \rightarrow \G$ is a graph isomorphism. Thus there is at least one fold which is a proper full fold or a partial fold, and hence some edge $b \in \mathcal{E}\G$ with $|f(b)| > 1$.  Let $e \in \mathcal{E}\G$ be any edge. Since $f$ is irreducible, there is a power $k$ such that $f^{k}(e)$ traverses $b$, and a power $p$ such that $f^p(b)$ traverses $b$. Hence $f^{np}(f^k(e))$ traverses $b$ for each $n \in \mathbb{N}$. Since $|f(b)|>1$, we have  $|f^{np+k+1}(e)| >|f^{np+k}(e)|$ for each $n \in \mathbb{N}$. Since  $|f(u)| \geq |u|$ for any edge path $u$,
$$\{|f^{n}(e)|\}_{n=1}^{\infty}$$
is a non-decreasing sequence of integers which strictly increases for each $n \equiv k+1$ mod $p$. Therefore $|f^{n}(e)| \rightarrow \infty $ and hence $f$ is expanding.
\end{itemize} \end{proof}

\bigskip

\section{Folds and Mixing}\label{folds and mixing}

The following lemmas relating folds, mixing edges, and stacks will provide key facts for our lower bound and symmetry results. 



\begin{lem}\label{stack format} Suppose $f:\G \rightarrow \G$ is an expanding i.h.e. map.  Then each stack of $f$ has the form $\mathcal{K}=\{e, f(e), f^2(e), \dots, f^s(e)\}$ with only $f^s(e)$ either mixing or surplus. 
\end{lem}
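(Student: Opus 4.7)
The plan is to organize the proof around an auxiliary directed graph $D$ on the vertex set $\mathcal{E}\G$ in which I place an arrow from $e$ to $e'$ exactly when $e$ is non-mixing and non-surplus and $e'$ is the positively oriented edge underlying $f(e)$. By definition of the stack equivalence relation, the weakly connected components of $D$ are precisely the stacks of $f$, so it suffices to show every weakly connected component of $D$ is a finite directed path whose terminal vertex is either mixing or surplus.

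First I would establish degree bounds in $D$. The out-degree of a vertex $e$ is at most $1$, with equality precisely when $e$ is non-mixing and non-surplus. For in-degree, I argue by contradiction: suppose distinct $u_1, u_2 \in \mathcal{E}\G$ both have arrows into the same vertex $e'$. Then both $u_i$ are non-mixing and non-surplus with $f(u_i) \in \{e', \ol{e'}\}$, so $f(u_1) \in \{f(u_2), \ol{f(u_2)}\}$. Since $u_1$ and $u_2$ are distinct positively oriented edges, $u_2 \notin \{u_1, \ol{u_1}\}$, and so by definition $u_1$ must be surplus, contradicting our assumption. Thus in-degree is also at most $1$, and every weakly connected component of $D$ is either a directed cycle or a finite directed path.

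The key step is eliminating cycles, and this is where the expanding hypothesis enters. Suppose a directed cycle $e_0 \to e_1 \to \cdots \to e_{s-1} \to e_0$ existed in $D$. Each $e_i$ would have out-degree $1$, hence be non-mixing, so $|f(e_i)| = 1$ for every $i$. Iterating gives $|f^n(e_0)| = 1$ for every $n \in \mathbb{N}$, contradicting the assumption that $f$ is expanding. This is the one genuinely substantive use of hypothesis, and it is the part of the argument I expect to be the main obstacle to adapt if the expanding hypothesis were weakened.

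With cycles eliminated, every component of $D$ is a finite directed path $e \to f(e) \to f^2(e) \to \cdots \to f^s(e)$. Every non-terminal vertex has out-degree $1$ and so is non-mixing and non-surplus, while the terminal vertex $f^s(e)$ has out-degree $0$ and so is either mixing or surplus. This is precisely the claimed form of the stack, with the only bookkeeping care needed being the consistent identification of an oriented edge with its positive representative when applying the surplus definition.
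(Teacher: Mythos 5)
Your proof is correct and takes a genuinely different route from the paper. The paper argues directly: fix an arbitrary edge $e$ in a stack $\mathcal{K}$, iterate $f$ forward until a mixing or surplus edge is first reached (showing the iterates are distinct as unoriented edges, using irreducibility to rule out the possibility $f^{k+1}(e)\in\{e,\overline e\}$ by forcing $T(f)$ to be a permutation matrix), and then extend backward by finding preimages $e'$ with $f(e')=e$ until the process terminates. You instead package the entire stack relation into the auxiliary directed graph $D$, prove the in-degree bound (which encodes precisely the fact that surplus edges break the stack relation, and also yields the distinctness the paper asserts more implicitly), and reduce the lemma to the elementary observation that a finite digraph with all in- and out-degrees at most one decomposes into directed paths and cycles. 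Your cycle-elimination step is also a bit leaner than the paper's: a directed cycle in $D$ forces $|f^n(e_0)|=1$ for all $n$, which contradicts expansion directly without invoking irreducibility or the transition matrix. The trade-off is that your approach requires the auxiliary graph scaffolding, while the paper's is more hands-on; but yours makes the structural claim (''each stack is a chain'') manifest rather than emergent, and isolates exactly where each hypothesis is used.
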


\begin{proof} 

\noindent Let $\mathcal{K}$ be a stack of $f$ and suppose $e \in \mathcal{K}$.  If $f^t(e)$ is non-mixing and non-surplus for all $0 \leq t \leq k$, then 
 $$\{e, f(e), \dots, f^k(e), f^{k+1}(e)\} \subseteq \mathcal{K}.$$ By the definition of a stack, these edges are distinct as unoriented edges, except possibly $f^{k+1}(e)\in\{e, \overline{e}\}$. Suppose $f^{k+1}(e)\in \{e ,\overline{e}\}$.  Then for any $b \in \{e,f(e),\dots,f^k(e)\}$, we have $f^n(b)$ or $f^n(\overline{b})$ is an edge in this same set. By irreducibility of $T(f)$, we must have $$\{e,f(e),\dots,f^k(e)\}= \mathcal{E}\G.$$ Thus $T(f)$ is a permutation matrix, so $\lambda_f =1$. By Lemma \ref{helpful facts}, this contradicts that $f$ is expanding. Thus $f^{k+1}(e) \notin \{e,\overline{e}\}$. \\
 
 Since $\mathcal{E}\G$ is finite, eventually there is a first power $s$ such that $f^s(e)$ is either mixing or surplus. Suppose  $\mathcal{K}~-~\{e, f(e), \dots, f^{s}(e)\}~\neq~\emptyset$. Then there must be an edge $e'$ such that $f(e')= e$. Thus 
$$\{e', f(e'), f^2(e'), \dots,  f^{s+1}(e')\} \subseteq \mathcal{K}.$$
Once again, if $\mathcal{K} - \{e', f(e'), \dots, f^{s+1}(e')\} \neq \emptyset$, there is a $e''$ such that $f(e'')=e'$, so 
$$\{e'', f(e''), f^2(e''), \dots,  f^{s+2}(e'')\} \subseteq \mathcal{K}.$$
Since $\mathcal{E}\G$ is finite, this process eventually terminates, so $\mathcal{K}$ has the desired format.  \end{proof} 

\vspace{0.2cm}
 
 \begin{df}\label{root, final} (Root Edge, Final Edge) Given a stack $\mathcal{K} = \{e, f(e), f^2(e), \dots, f^s(e)\}$, we call $e$ the \textit{root edge} of $\mathcal{K}$ and $f^s(e)$ the \textit{final edge} of $\mathcal{K}$. 
 \end{df}


\begin{lem}\label{folds, edge images}
Suppose $f:\Gamma \rightarrow \Gamma$ is an expanding i.h.e. map with fold decomposition consisting of $m$ total folds and $p$ total stacks. Then
\begin{align} m \leq \sum_{e \in \mathcal{E}\G} \Big{(} |f(e)|-1\Big{)}. \label{eq:1} \end{align}
Moreover, if $f$ is periodic on the vertices of $\G$, then $p \leq m$. 
\end{lem}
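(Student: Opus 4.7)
The plan is to use the Stallings fold decomposition $f = h \circ f_m \circ \cdots \circ f_1$ and track quantities through the intermediate maps $g_i := f_i \circ \cdots \circ f_1 \colon \Gamma \to \Gamma_{i+1}$ (so $g_0 = \mathrm{id}$ and $h \circ g_m = f$). Let $m_{pf}, m_c, m_p$ count proper full, complete, and partial folds, so $m = m_{pf} + m_c + m_p$. Since $h$ is a graph isomorphism, $|\mathcal{E}\Gamma_{m+1}| = |\mathcal{E}\Gamma|$; tracking per-fold edge changes ($0, -1, +1$ respectively) forces $m_c = m_p$, so $m = m_{pf} + 2m_p$. This identity is the bridge at both stages.

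For the first inequality, I will track the length sum $L_i := \sum_{e \in \mathcal{E}\Gamma} |g_i(e)|$; note $L_0 = |\mathcal{E}\Gamma|$ and $L_m = \sum_e |f(e)|$. A quick induction (using that each fold is surjective on edges) shows every edge of $\Gamma_i$ is traversed by some $g_{i-1}(e)$. A direct case analysis of the three fold types then gives $L_i - L_{i-1} = 0$ for a complete fold, equals the total $e_1$-traversal count across $\{g_{i-1}(e)\}_e$ (hence $\geq 1$) for a proper full fold of $e_1$ over $e_0$, and equals the combined $e_0$- and $e_1$-traversal count (hence $\geq 2$) for a partial fold. Telescoping yields $\sum_e (|f(e)| - 1) = L_m - L_0 \geq m_{pf} + 2m_p = m$.

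For the second inequality, the crux is a rigidity lemma under vertex periodicity: for each $i \leq m$, if distinct positive edges $e, u \in \mathcal{E}\Gamma$ satisfy $g_i(e) \in \{g_i(u), \overline{g_i(u)}\}$, one reaches a contradiction. Applying the remaining folds propagates the collision all the way to $f(e) \in \{f(u), \overline{f(u)}\}$; bijectivity of $f_V$ then forces the initial and terminal vertices of $e$ and $u$ to match, either in the same or the reversed order. In the same-order case $e\overline{u}$ is a non-trivial loop that $f_*$ sends to $f(e)\overline{f(e)}$, which tightens to a point; in the reversed-order case $eu$ plays the same role. Either way $f_*$ kills a non-trivial element of $\pi_1$, contradicting $f$ being a homotopy equivalence.

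Two consequences finish the proof. Taking $i = m$ in the rigidity lemma yields $\mathrm{Sur} = 0$, since a surplus pair is precisely such a single-edge collision. For mixing edges, I count those that \emph{become} mixing at some step $i$, i.e., edges $e$ with $|g_{i-1}(e)| = 1$ and $|g_i(e)| \geq 2$. A complete fold creates none. For a proper full fold of $e_1$ over $e_0$, the rigidity lemma forces at most one unoriented $e$ to have $g_{i-1}(e) \in \{e_1, \overline{e_1}\}$, so at most one new mixing edge; similarly a partial fold creates at most two. Summing, $\mathrm{Mix} \leq m_{pf} + 2m_p = m$, and combined with $\mathrm{Sur} = 0$ and Lemma \ref{stack format} this gives $p = \mathrm{Mix} + \mathrm{Sur} \leq m$. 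I expect the main obstacle to be the rigidity lemma — specifically, handling the reversed-image case uniformly with the equal-image case and confirming that the resulting loops in $\pi_1$ are genuinely non-trivial even when $e$ and $u$ are parallel loops at a common vertex.
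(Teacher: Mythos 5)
Your proof is correct and follows essentially the same route as the paper's: you telescope a fold-by-fold length sum for the first inequality (using the constraint $m_{c}=m_{p}$, which you derive from edge counts where the paper uses vertex counts), and then, under vertex periodicity, you rule out surplus edges and bound the number of mixing edges by the number of folds via the identity $m = m_{pf}+2m_{p}$. Your explicit ``rigidity lemma'' is a nice touch: the paper's no-surplus argument is precisely your $i=m$ case, and the paper's unproved assertion that each proper full (resp.\ partial) fold is assigned at most one (resp.\ two) of the final mixing edges is exactly the content of your lemma at the intermediate stage $i$, which the paper leaves implicit in its recursive relabelling of the $\alpha_{k}$.
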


\begin{proof}  Write $f = h \circ f_m \circ \dots \circ f_2 \circ f_1$ where $\G_1 = \G$, each $f_i: \G_i \rightarrow \G_{i+1}$ is a fold and $h:\G_{m+1} \rightarrow \G$ is a graph isomorphism. To keep track of the number of edges in the image as each fold $f_i$ is applied, let $T_0=0$ and 
$$T_i = \sum_{e \in \mathcal{E}\G} \Big{(} |(f_i \circ \dots \circ f_1) (e)|-1\Big{)}.$$

\noindent \textit{Claim:
\begin{itemize}
\item[(i)] If $f_i$ is a proper full fold, then $T_i \geq 1+T_{i-1}$ and $|\mathcal{V}\G_{i+1}| = |\mathcal{V}\G_{i}|$. \vspace{0.1cm}
\item[(ii)] If $f_i$ is a complete fold, then $T_i = T_{i-1}$  and $|\mathcal{V}\G_{i+1}| = |\mathcal{V}\G_{i}| -1$. \vspace{0.1cm}
\item[(iii)] If $f_i$ is a partial fold, then $T_i  \geq 2+T_{i-1}$  and $|\mathcal{V}\G_{i+1}| = |\mathcal{V}\G_{i}|+1$. \\
\end{itemize}}

\noindent Assuming the claim for now, we have $$T_m \geq \text{(number of proper full folds)} + 2\text{(number of partial folds)}$$
and
$$|V\G_{m+1}| = |V\G| + \text{(number of partial folds)} - \text{(number of complete folds)}.$$

Since $h: \Gamma_{m+1} \rightarrow \G$ is a graph isomorphism, $|\mathcal{V}\G_{m+1}| = |\mathcal{V}\G|$, so the number of complete folds must be equal to the number of partial folds. Further, for any edge path $u$,  we have $|h(u)|=|u|$, again since $h$ is a graph isomorphism. Therefore
\begin{align*}
 \sum_{e \in \mathcal{E}\G} \Big{(} |f(e)|-1\Big{)} & = T_m   \\ 
& \geq  \text{(number of proper full folds)} + 2\text{(number of partial folds)} \\
  & =  \text{(number of proper full folds)} + \text{(number of partial folds)} \\
  & \ \ \ \ \ \  \   + \text{(number of complete folds)} \\
  & = m.
\end{align*}
This completes the proof of equation \eqref{eq:1}. We now move on to proving claims (i), (ii) and (iii) and subsequently prove the statement that if $f$ is periodic on the vertices of $\G$, then $p \leq m$.\\

\noindent \textit{Proof of Claim (i):} Suppose $f_i: \G_i \rightarrow \G_{i+1}$ is a proper full fold of $e_1$ over $e_0$. By definition,  $|\mathcal{V}\G_{i+1}|=|\mathcal{V}\G_{i}|$ and
$$f_i(e) = \begin{cases}
e_0'e_1 & e = e_1 \\
e & \text{otherwise} \end{cases}$$
Let $u \in \mathcal{E}\G$. If  $(f_{i-1} \circ \dots \circ f_1)(u)$ traverses $e_1$ a total of $k$ times, then $|(f_{i} \circ \dots \circ f_1)(u)| = |(f_{i-1} \circ \dots \circ f_1)(u)| + k$. Since each $f_j$ is surjective, there must be at least one $u$ with $k >0$. Hence $T_i \geq T_{i-1} +1$. \hfill$\diamond$\\

\noindent \textit{Proof of Claim (ii):} Suppose $f_i: \G_i \rightarrow \G_{i+1}$ is a complete fold of $e_1$ and $e_0$. Since $f$ is a homotopy equivalence, $\tau(e_0) \neq \tau(e_1)$. Thus $|\mathcal{V}\G_{i+1}|=|\mathcal{V}\G_{i}|-1$. By definition,
$$f_i(e) = \begin{cases}e_0' & e \in \{e_0,e_1\} \\
e & \text{otherwise} \end{cases}$$
For all $u \in \mathcal{E}\G$, we have $|(f_{i} \circ \dots \circ f_1)(u)| = |(f_{i-1} \circ \dots \circ f_1)(u)| $, so $T_i=T_{i+1}$. \hfill $\diamond$\\

\noindent \textit{Proof of Claim (iii):} Suppose $f_i: \G_i \rightarrow \G_{i+1}$ is a partial fold of $e_1$ over $e_0$. By definition, $|\mathcal{V}\G_{i+1}|=|\mathcal{V}\G_{i}|+1$ and
$$f_i(e) = \begin{cases}e_0' e_0''& e=e_0 \\
e_0'e_1' & e = e_1 \\
e & \text{otherwise} \end{cases}$$
Let $u \in \mathcal{E}\G$. If $(f_{i-1} \circ \dots \circ f_1)(u)$ traverses $e_0$ and $e_1$ a total of $k$ times, then $|(f_{i} \circ \dots \circ f_1)(u)| = |(f_{i-1} \circ \dots \circ f_1)(u)| + k$. Since each $f_j$ is surjective, there must be at least one $u$ with $(f_{i-1} \circ \dots \circ f_1)(u)$ traversing $e_0$ at least once, and at least one $u$ with $(f_{i-1} \circ \dots \circ f_1)(u)$ traversing $e_1$ at least once. Hence $T_i \geq T_{i-1} +2$.  \hfill$\diamond$\\

\vspace{0.1cm}
Now, suppose $f$ is periodic on the vertices of $\G$. Suppose distinct edges $e_1,e_2 \in \mathcal{E}\G$ are surplus and $f(e_1)=f(e_2)$. Since $f$ is a bijection on the vertices, we must have $\iota(e_1)=\iota(e_2)$ and $\tau(e_1)=\tau(e_2)$. Hence $e_1 \overline{e_2}$ is a closed loop $\G$ which is not null-homotopic.  However, $f(e_1\overline{e_2})=f(e_1)\overline{f(e_1)}$ is null-homotopic, contradicting that $f$ is a homotopy equivalence. Therefore there are no surplus edges, and hence by Lemma \ref{stack format}, the final edge in each stack is mixing. Let $\alpha_1, \dots, \alpha_p$ denote these final mixing edges. We will make an assignment of each $\alpha_k$ to a fold $f_{i_k}$ in the following way:\\

Recursively label $f_i(\alpha_k)$ as $\alpha_k \in \mathcal{E}\G_{i+1}$ whenever $|f_i(\alpha_k)|=1$. This agrees with the labelling determined in Definition \ref{folds}. If $\alpha_k$ nor $\overline{\alpha_k}$ is never properly folded over an edge, nor involved in a partial fold, then $|f(\alpha_k)|=1$ contradicting that $\alpha_k$ is mixing. Thus, possibly replacing $\alpha_k$ with $\overline{\alpha_k}$, there must exist a first fold $f_{i_k}$ and some $e_0 \in \mathcal{E}\G_{i_k}$ such that 
\begin{itemize}
\item[(i)] $f_{i_k}$ is a proper full fold of $\alpha_k$ over $e_0$ and $f_{i_k}(\alpha_k)=\alpha_k'e_0$, or
\item[(ii)] $f_{i_k}$ is a partial fold of $\alpha_k$ over $e_0$ and $f_{i_k}(\alpha_k)=\alpha_k'e_0'$, or
\item[(iii)] $f_{i_k}$ is a partial fold of $e_0$ over $\alpha_k$ and $f_{i_k}(\alpha_k)=e_0''e_0$.\\
\end{itemize}

To each proper full fold, either one or zero mixing edges are assigned. To each partial fold, either two, one, or zero mixing edges are assigned.  As argued above, the number of partial folds is equal to the number of complete folds. Since all $p$ mixing edges are assigned to some proper full fold or partial fold, there are at least $p$ folds.  
\end{proof}


\bigskip

\section{Stack Graphs}\label{stack graphs}

To prove Theorem \hyperref[lower bound]{A} , we develop a tool called the stack graph to measure how the stacks of a graph map interact with each other. Alternatively, combining Lemma \ref{folds, edge images} with Lemma \ref{hamsong} (\cite{hamsong}) yields a proof of Theorem \hyperref[lower bound]{A} for i.h.e. maps with primitive transition matrices, which avoids the need for stack graphs. 

\vspace{0.1cm}
For the duration of this section, let  $f: \G \rightarrow \G$ be an irreducible expanding 
self graph map with stacks $\mathcal{K}_1, \dots ,\mathcal{K}_p$. For each $1 \leq i \leq p$, let $n_i$ be the number of edges in stack $\mathcal{K}_i$ and $\alpha_i$ the final edge in stack $\mathcal{K}_i$. Let $n$ be the total number of edges in $\G$ and note that $n= \sum_{i=1}^p n_i$.  

 \begin{df}\label{stack graph} (Stack Graph, Weight $\omega$)  The \textit{stack graph of $f$}, denoted $\mathcal{SG}(f)$, is a directed graph with vertex set $\mathcal{V} (\mathcal{SG}(f))=\{\mathcal{K}_1, \dots,\mathcal{K}_p\}$ and directed edges:
 $$\mathcal{E}^+\mathcal{SG}(f) = \{[\mathcal{K}_i,\mathcal{K}_j] \ | \ f(\alpha_{i}) \text{ contains an edge in }\mathcal{K}_j\}.$$
 \end{df}
 \noindent We assign a weight $\omega$ to the vertices of $\mathcal{SG}(f)$:
  $$\omega(\mathcal{K}_i) := |f(\alpha_{i})|-1$$
  Note that $\omega(\mathcal{K}_i)=0$ if and only if the final edge $\alpha_i$ is surplus, instead of mixing.  
\begin{obs}\label{weight} Any non-final edge $e$ is non-mixing, and hence has $|f(e)|=1$. When $f$ is an expanding i.h.e. map, by Lemma \ref{folds, edge images} we have
\vspace{-0.2cm}
\begin{align*}\sum_{j=1}^p \omega(\mathcal{K}_j) \ & =  \ \ \sum_{j=1}^{p}\Big{(} |f(\alpha_{j})| - 1\Big{)} \\
& = \sum_{e \in \mathcal{E}\G}\Big{(} |f(e)| - 1\Big{)}\ \geq m,
\end{align*}
where $m$ is the number of folds in the fold decomposition of $f$. 
\end{obs}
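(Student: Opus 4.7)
The plan is straightforward: the observation is essentially a bookkeeping statement that chains the definition of the vertex weight $\omega$ together with Lemma \ref{stack format} and Lemma \ref{folds, edge images}, so I will verify each of the three (in)equalities in turn.

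First, the equality $\sum_{j=1}^p \omega(\mathcal{K}_j) = \sum_{j=1}^p (|f(\alpha_j)| - 1)$ is immediate from the definition $\omega(\mathcal{K}_i) := |f(\alpha_i)| - 1$, so nothing needs to be argued there. For the middle equality $\sum_{j=1}^p (|f(\alpha_j)|-1) = \sum_{e \in \mathcal{E}\Gamma} (|f(e)|-1)$, I will invoke Lemma \ref{stack format}: since $f$ is an expanding i.h.e.\ map, every stack has the form $\{e, f(e), \dots, f^s(e)\}$ with only the final edge $f^s(e) = \alpha_j$ mixing or surplus. Because the stacks partition $\mathcal{E}\Gamma$, every edge lies in exactly one stack, and every non-final edge is non-mixing, i.e.\ $|f(e)| = 1$ and so contributes $|f(e)|-1 = 0$ to the right-hand sum. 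Thus the only nonzero contributions come from the $p$ final edges $\alpha_1, \dots, \alpha_p$, giving the desired equality.

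Finally, the inequality $\sum_{e \in \mathcal{E}\Gamma}(|f(e)| - 1) \geq m$ is exactly equation \eqref{eq:1} of Lemma \ref{folds, edge images}, applied to $f$. There is no real obstacle here — the content of the observation is already carried by the two lemmas, and the step that might warrant a brief justification in writing is noting that each edge of $\Gamma$ belongs to a unique stack so that summing $|f(e)| - 1$ over $\mathcal{E}\Gamma$ is equivalent to summing only over final edges.
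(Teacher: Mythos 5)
Your proof is correct and follows the same route as the paper: the first equality is the definition of $\omega$, the middle equality follows because Lemma \ref{stack format} shows non-final edges are non-mixing (hence $|f(e)| = 1$ and contribute nothing), and the inequality is equation \eqref{eq:1} of Lemma \ref{folds, edge images}. The only cosmetic point is that you say "the only nonzero contributions come from the $p$ final edges," when more precisely these are the only \emph{potential} nonzero contributions (a final surplus edge still contributes $0$); this does not affect the argument.
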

\begin{df} (Length $s$, Directed ball of size $d$) We assign a length $s$ to the edges of $\mathcal{SG}(f)$:
 $$s([\mathcal{K}_i,\mathcal{K}_j]) := \min\{s \ | \ f^s(\alpha_i) \text{ traverses } \alpha_j\}$$
\noindent Observe that by definition of  $\mathcal{E}^+\mathcal{SG}(f)$, $s([\mathcal{K}_i,\mathcal{K}_j]) \leq n_j$. For any number $d$ and $\mathcal{K}_i \in \mathcal{V}(\mathcal{SG}(f))$, let the \textit{directed ball of size $d$ at $\mathcal{K}_i$}, be
 $$B_d(\mathcal{K}_i) = \{\mathcal{K}_j \in  \mathcal{V}(\mathcal{SG}(f)) \ |\ \text{there is a directed edge path }P\text{ in }\mathcal{SG}(f) \text{ from }\mathcal{K}_i \text{ to }\mathcal{K}_j \text{ with }s(P) \leq d\},$$
where $P=E_1\dots E_k$ must only traverse edges with positive orientation and $s(P) := \sum_{i=1}^k s(E_i)$
\end{df}

\vspace{0.2cm}
\begin{multicols}{2}
\begin{ex}\label{stack graph example} Consider the irreducible expanding self graph map $\mathfrak{f}:\G \rightarrow \G$, written in stack format to the right. \\ 

Below and to the right is the stack graph of $\mathfrak{f}$, $\mathcal{SG}(\mathfrak{f})$ with length of edges labeled, and the weight of each vertex in $\mathcal{SG}(\mathfrak{f})$. For example, $a_3$ is the final edge in stack $a$ and $$\mathfrak{f}^3(a_3) =b_3a_3d_1b_3a_3b_4b_1$$ contains the final edge in stacks $a$, $b$, and $d$. There are directed paths of length $3$ in $\mathcal{SG}(\mathfrak{f})$ from $a$ to $a$, $b$, and $d$. In contrast, there is no directed path of length $3$ from $a$ to $c$.  

\columnbreak 
\textcolor{white}{\ } \\
\vspace{-3cm}
\tikzset{every picture/.style={line width=0.75pt}} 

\hspace{-0.7cm}\begin{tikzpicture}[x=0.75pt,y=0.75pt,yscale=-1,xscale=1]

\draw    (75.4,20.75) .. controls (34.9,98.16) and (75.94,21.31) .. (34.69,97.22) ;
\draw [shift={(58.58,53.07)}, rotate = 116.98] [color={rgb, 255:red, 0; green, 0; blue, 0 }  ][line width=0.75]    (10.93,-3.29) .. controls (6.95,-1.4) and (3.31,-0.3) .. (0,0) .. controls (3.31,0.3) and (6.95,1.4) .. (10.93,3.29)   ;
\draw    (34.69,97.22) .. controls (116.12,95.54) and (33.19,97.29) .. (116.12,95.97) ;
\draw [shift={(68.46,96.53)}, rotate = 359.04] [color={rgb, 255:red, 0; green, 0; blue, 0 }  ][line width=0.75]    (10.93,-3.29) .. controls (6.95,-1.4) and (3.31,-0.3) .. (0,0) .. controls (3.31,0.3) and (6.95,1.4) .. (10.93,3.29)   ;
\draw    (116.12,95.97) .. controls (75.08,21.31) and (115.27,95.54) .. (75.4,20.75) ;
\draw [shift={(98.73,64.46)}, rotate = 240.73] [color={rgb, 255:red, 0; green, 0; blue, 0 }  ][line width=0.75]    (10.93,-3.29) .. controls (6.95,-1.4) and (3.31,-0.3) .. (0,0) .. controls (3.31,0.3) and (6.95,1.4) .. (10.93,3.29)   ;
\draw    (75.4,20.75) .. controls (105.86,38.78) and (114.41,56.24) .. (116.12,95.97) ;
\draw [shift={(103.96,45.83)}, rotate = 60.05] [color={rgb, 255:red, 0; green, 0; blue, 0 }  ][line width=0.75]    (10.93,-3.29) .. controls (6.95,-1.4) and (3.31,-0.3) .. (0,0) .. controls (3.31,0.3) and (6.95,1.4) .. (10.93,3.29)   ;
\draw [line width=0.75]    (34.29,96.63) .. controls (-3.57,105.15) and (23.78,65.85) .. (34.59,95.28) ;
\draw    (35.88,98.39) .. controls (37.46,130.47) and (-1.01,106.02) .. (34.29,96.63) ;
\draw [line width=0.75]    (75.37,22.27) .. controls (76.7,-17.3) and (105.86,17.82) .. (76.57,22.91) ;
\draw    (73.3,23.41) .. controls (42.48,17.01) and (75.24,-14.98) .. (75.37,22.27) ;
\draw [line width=0.75]    (116.09,95.48) .. controls (142.66,124.32) and (95.6,120.47) .. (114.8,95.9) ;
\draw    (116.76,93.19) .. controls (142.91,75.35) and (142.08,121.62) .. (116.09,95.48) ;
\draw  [fill={rgb, 255:red, 0; green, 0; blue, 0 }  ,fill opacity=1 ] (73.65,22.95) .. controls (73.06,21.97) and (73.42,20.6) .. (74.46,19.9) .. controls (75.5,19.19) and (76.83,19.42) .. (77.43,20.4) .. controls (78.02,21.39) and (77.66,22.75) .. (76.62,23.46) .. controls (75.57,24.16) and (74.25,23.93) .. (73.65,22.95) -- cycle ;
\draw  [fill={rgb, 255:red, 0; green, 0; blue, 0 }  ,fill opacity=1 ] (32.91,97.88) .. controls (32.32,96.9) and (32.68,95.53) .. (33.72,94.83) .. controls (34.76,94.13) and (36.09,94.35) .. (36.69,95.34) .. controls (37.28,96.32) and (36.92,97.69) .. (35.88,98.39) .. controls (34.83,99.1) and (33.51,98.87) .. (32.91,97.88) -- cycle ;
\draw  [fill={rgb, 255:red, 0; green, 0; blue, 0 }  ,fill opacity=1 ] (113.84,96.31) .. controls (113.24,95.32) and (113.6,93.96) .. (114.65,93.25) .. controls (115.69,92.55) and (117.02,92.78) .. (117.61,93.76) .. controls (118.21,94.74) and (117.84,96.11) .. (116.8,96.81) .. controls (115.76,97.52) and (114.43,97.29) .. (113.84,96.31) -- cycle ;
\draw [line width=0.75]    (134,72.5) .. controls (402,193.5) and (406,-76.5) .. (131,38.5) ;
\draw [shift={(131,38.5)}, rotate = 337.31] [color={rgb, 255:red, 0; green, 0; blue, 0 }  ][line width=0.75]    (10.93,-3.29) .. controls (6.95,-1.4) and (3.31,-0.3) .. (0,0) .. controls (3.31,0.3) and (6.95,1.4) .. (10.93,3.29)   ;

\draw (67.91,79) node [anchor=north west][inner sep=0.75pt]  [font=\footnotesize]  {$a_{1}$};
\draw (80.79,57.09) node [anchor=north west][inner sep=0.75pt]  [font=\footnotesize]  {$a_{3}$};
\draw (56.3,56.59) node [anchor=north west][inner sep=0.75pt]  [font=\footnotesize]  {$a_{2}$};
\draw (107.84,33.65) node [anchor=north west][inner sep=0.75pt]  [font=\footnotesize]  {$d_{1}$};
\draw (0.11,84.82) node [anchor=north west][inner sep=0.75pt]  [font=\footnotesize]  {$b_{2}$};
\draw (7.05,99.51) node [anchor=north west][inner sep=0.75pt]  [font=\footnotesize]  {$c_{1}$};
\draw (90.51,5.19) node [anchor=north west][inner sep=0.75pt]  [font=\footnotesize]  {$b_{3}$};
\draw (46.52,4.32) node [anchor=north west][inner sep=0.75pt]  [font=\footnotesize]  {$c_{2}$};
\draw (135.39,83.9) node [anchor=north west][inner sep=0.75pt]  [font=\footnotesize]  {$b_{1}$};
\draw (128.56,103.75) node [anchor=north west][inner sep=0.75pt]  [font=\footnotesize]  {$b_{4}$};
\draw (177,24.4) node [anchor=north west][inner sep=0.75pt]  [font=\footnotesize]  {$a_{1} \mapsto a_{2} \mapsto a_{3} \mapsto b_{1} a_{1} c_{1} \ $};
\draw (151,44.4) node [anchor=north west][inner sep=0.75pt]  [font=\footnotesize]  {$b_{1} \mapsto b_{2} \mapsto b_{3} \mapsto b_{4} \mapsto c_{1} a_{2} c_{2} a_{3} a_{1}$};
\draw (210,63.4) node [anchor=north west][inner sep=0.75pt]  [font=\footnotesize]  {$c_{1} \mapsto c_{2} \mapsto d_{1} b_{3} a_{3} b_{4} b_{1}$};
\draw (240,79.4) node [anchor=north west][inner sep=0.75pt]  [font=\footnotesize]  {$d_{1} \mapsto \overline{a_{1}} b_{1}$};
\draw (344,45.4) node [anchor=north west][inner sep=0.75pt]  [font=\large]  {$\mathfrak{f}$};
\draw (7,45.4) node [anchor=north west][inner sep=0.75pt]  [font=\Large]  {$\Gamma $};

\end{tikzpicture}
\vspace{-2.2cm}
\tikzset{every picture/.style={line width=0.75pt}} 

\hspace{0.4cm}\begin{tikzpicture}[x=0.75pt,y=0.75pt,yscale=-1,xscale=1]

\draw  [fill={rgb, 255:red, 0; green, 0; blue, 0 }  ,fill opacity=1 ] (221.88,32.01) .. controls (221.05,30.7) and (221.56,28.88) .. (223.01,27.95) .. controls (224.45,27.01) and (226.3,27.32) .. (227.12,28.62) .. controls (227.95,29.93) and (227.45,31.75) .. (226,32.68) .. controls (224.55,33.62) and (222.71,33.32) .. (221.88,32.01) -- cycle ;
\draw  [fill={rgb, 255:red, 0; green, 0; blue, 0 }  ,fill opacity=1 ] (128.1,33.17) .. controls (127.28,31.86) and (127.78,30.04) .. (129.23,29.11) .. controls (130.68,28.18) and (132.52,28.48) .. (133.35,29.78) .. controls (134.17,31.09) and (133.67,32.91) .. (132.22,33.84) .. controls (130.77,34.78) and (128.93,34.48) .. (128.1,33.17) -- cycle ;
\draw  [fill={rgb, 255:red, 0; green, 0; blue, 0 }  ,fill opacity=1 ] (176.77,86.57) .. controls (175.95,85.26) and (176.45,83.44) .. (177.9,82.51) .. controls (179.35,81.57) and (181.19,81.87) .. (182.02,83.18) .. controls (182.84,84.49) and (182.34,86.31) .. (180.89,87.24) .. controls (179.44,88.17) and (177.6,87.87) .. (176.77,86.57) -- cycle ;
\draw  [fill={rgb, 255:red, 0; green, 0; blue, 0 }  ,fill opacity=1 ] (175.77,125.28) .. controls (174.95,123.98) and (175.45,122.16) .. (176.9,121.23) .. controls (178.35,120.29) and (180.19,120.59) .. (181.02,121.9) .. controls (181.84,123.21) and (181.34,125.02) .. (179.89,125.96) .. controls (178.44,126.89) and (176.6,126.59) .. (175.77,125.28) -- cycle ;
\draw    (224.5,30.32) .. controls (196.82,16.53) and (162.14,15.9) .. (130.73,31.48) ;
\draw [shift={(184.29,20)}, rotate = 180.94] [color={rgb, 255:red, 0; green, 0; blue, 0 }  ][line width=0.75]    (10.93,-3.29) .. controls (6.95,-1.4) and (3.31,-0.3) .. (0,0) .. controls (3.31,0.3) and (6.95,1.4) .. (10.93,3.29)   ;
\draw    (180.89,87.24) .. controls (158,66) and (149,53) .. (129.23,29.11) ;
\draw [shift={(158.51,64.48)}, rotate = 228.94] [color={rgb, 255:red, 0; green, 0; blue, 0 }  ][line width=0.75]    (10.93,-3.29) .. controls (6.95,-1.4) and (3.31,-0.3) .. (0,0) .. controls (3.31,0.3) and (6.95,1.4) .. (10.93,3.29)   ;
\draw    (130.73,31.48) .. controls (172,35) and (187,32) .. (224.5,30.32) ;
\draw [shift={(170.76,33.08)}, rotate = 358.59] [color={rgb, 255:red, 0; green, 0; blue, 0 }  ][line width=0.75]    (10.93,-3.29) .. controls (6.95,-1.4) and (3.31,-0.3) .. (0,0) .. controls (3.31,0.3) and (6.95,1.4) .. (10.93,3.29)   ;
\draw    (179.39,84.87) .. controls (199,65) and (206,54) .. (224.5,30.32) ;
\draw [shift={(198.61,63.66)}, rotate = 308.94] [color={rgb, 255:red, 0; green, 0; blue, 0 }  ][line width=0.75]    (10.93,-3.29) .. controls (6.95,-1.4) and (3.31,-0.3) .. (0,0) .. controls (3.31,0.3) and (6.95,1.4) .. (10.93,3.29)   ;
\draw    (179,124) .. controls (179.56,70.12) and (178,119) .. (179.39,84.87) ;
\draw [shift={(179.11,109.7)}, rotate = 270.11] [color={rgb, 255:red, 0; green, 0; blue, 0 }  ][line width=0.75]    (10.93,-3.29) .. controls (6.95,-1.4) and (3.31,-0.3) .. (0,0) .. controls (3.31,0.3) and (6.95,1.4) .. (10.93,3.29)   ;
\draw    (224.5,30.32) .. controls (222,64) and (211,78) .. (179.39,84.87) ;
\draw [shift={(216.98,61.49)}, rotate = 122.52] [color={rgb, 255:red, 0; green, 0; blue, 0 }  ][line width=0.75]    (10.93,-3.29) .. controls (6.95,-1.4) and (3.31,-0.3) .. (0,0) .. controls (3.31,0.3) and (6.95,1.4) .. (10.93,3.29)   ;
\draw    (130.73,31.48) .. controls (137,72) and (153,79) .. (179.39,84.87) ;
\draw [shift={(140.52,62.46)}, rotate = 54.88] [color={rgb, 255:red, 0; green, 0; blue, 0 }  ][line width=0.75]    (10.93,-3.29) .. controls (6.95,-1.4) and (3.31,-0.3) .. (0,0) .. controls (3.31,0.3) and (6.95,1.4) .. (10.93,3.29)   ;
\draw    (130.73,31.48) .. controls (124,96) and (141,106) .. (179,124) ;
\draw [shift={(132.41,82.38)}, rotate = 70.71] [color={rgb, 255:red, 0; green, 0; blue, 0 }  ][line width=0.75]    (10.93,-3.29) .. controls (6.95,-1.4) and (3.31,-0.3) .. (0,0) .. controls (3.31,0.3) and (6.95,1.4) .. (10.93,3.29)   ;
\draw    (226,32.68) .. controls (229,96) and (217,110) .. (179,124) ;
\draw [shift={(223.3,82.63)}, rotate = 106.41] [color={rgb, 255:red, 0; green, 0; blue, 0 }  ][line width=0.75]    (10.93,-3.29) .. controls (6.95,-1.4) and (3.31,-0.3) .. (0,0) .. controls (3.31,0.3) and (6.95,1.4) .. (10.93,3.29)   ;
\draw    (129.23,29.11) .. controls (176,-2) and (81,2) .. (129.23,29.11) ;
\draw [shift={(135.54,7.86)}, rotate = 186.02] [color={rgb, 255:red, 0; green, 0; blue, 0 }  ][line width=0.75]    (10.93,-3.29) .. controls (6.95,-1.4) and (3.31,-0.3) .. (0,0) .. controls (3.31,0.3) and (6.95,1.4) .. (10.93,3.29)   ;

\draw (116.11,25.31) node [anchor=north west][inner sep=0.75pt]  [font=\small]  {$a$};
\draw (230.69,21.91) node [anchor=north west][inner sep=0.75pt]  [font=\small]  {$b$};
\draw (176.09,63.7) node [anchor=north west][inner sep=0.75pt]  [font=\small]  {$c$};
\draw (173.77,127.68) node [anchor=north west][inner sep=0.75pt]  [font=\small]  {$d$};
\draw (174,4.4) node [anchor=north west][inner sep=0.75pt]  [font=\footnotesize]  {$4$};
\draw (175,37.4) node [anchor=north west][inner sep=0.75pt]  [font=\footnotesize]  {$1$};
\draw (157,47.4) node [anchor=north west][inner sep=0.75pt]  [font=\footnotesize]  {$2$};
\draw (193,46.4) node [anchor=north west][inner sep=0.75pt]  [font=\footnotesize]  {$1$};
\draw (206,73.4) node [anchor=north west][inner sep=0.75pt]  [font=\footnotesize]  {$1$};
\draw (144,73.4) node [anchor=north west][inner sep=0.75pt]  [font=\footnotesize]  {$1$};
\draw (183,97.4) node [anchor=north west][inner sep=0.75pt]  [font=\footnotesize]  {$1$};
\draw (218,96.4) node [anchor=north west][inner sep=0.75pt]  [font=\footnotesize]  {$4$};
\draw (128,97.4) node [anchor=north west][inner sep=0.75pt]  [font=\footnotesize]  {$3$};
\draw (19,32.4) node [anchor=north west][inner sep=0.75pt]    {$\omega ( a) =2$};
\draw (18,57.4) node [anchor=north west][inner sep=0.75pt]    {$\omega ( b) =4$};
\draw (19,82.4) node [anchor=north west][inner sep=0.75pt]    {$\omega ( c) =4$};
\draw (18,108.4) node [anchor=north west][inner sep=0.75pt]    {$\omega ( d) =1$};
\draw (250,67.4) node [anchor=north west][inner sep=0.75pt]  [font=\Large]  {$\mathcal{SG}(\mathfrak{f})$};
\draw (125,10.4) node [anchor=north west][inner sep=0.75pt]  [font=\footnotesize]  {$3$};

\end{tikzpicture}
\end{ex}
\end{multicols}
\vspace{0.2cm}

\begin{lem}\label{stack paths} If there is a directed path $P$ in $\mathcal{SG}(f)$ from $\mathcal{K}_i$ to $\mathcal{K}_j$ with $s(P)=d$, then $f^d(\alpha_i)$ traverses $\alpha_j$.  
\end{lem}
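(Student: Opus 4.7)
The plan is to prove the lemma by induction on the number of directed edges in the path $P$. The statement reduces to a clean inductive claim once we observe the following basic fact: if an edge path $u$ traverses an edge $e$ (in either orientation) and $f \colon \G \to \G$ is any graph map, then $f^t(u)$ traverses $f^t(e)$ for every $t \geq 0$. This is immediate from Definition \ref{graph map}, since the image of an edge path is the concatenation of images of its edges, and $f(\overline{e}) = \overline{f(e)}$ means an occurrence of $\overline{e}$ inside $u$ contributes $\overline{f^t(e)}$ inside $f^t(u)$.

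For the base case, $P$ consists of a single directed edge $[\mathcal{K}_i, \mathcal{K}_j]$ and $d = s([\mathcal{K}_i, \mathcal{K}_j])$; the claim is exactly the definition of $s$. For the inductive step, write $P = P' \cdot [\mathcal{K}_\ell, \mathcal{K}_j]$, where $P'$ is a directed path in $\mathcal{SG}(f)$ from $\mathcal{K}_i$ to $\mathcal{K}_\ell$ of length $d' = s(P')$, so $d = d' + s([\mathcal{K}_\ell, \mathcal{K}_j])$. By the inductive hypothesis, $f^{d'}(\alpha_i)$ traverses $\alpha_\ell$, and by definition of $s$, $f^{s([\mathcal{K}_\ell, \mathcal{K}_j])}(\alpha_\ell)$ traverses $\alpha_j$. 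Applying the observation from the first paragraph with $u = f^{d'}(\alpha_i)$, $e = \alpha_\ell$, and $t = s([\mathcal{K}_\ell, \mathcal{K}_j])$, we conclude that $f^{d}(\alpha_i) = f^{t}\bigl(f^{d'}(\alpha_i)\bigr)$ traverses $f^{t}(\alpha_\ell)$, which itself traverses $\alpha_j$. Transitivity of ``traverses'' (Notation \ref{word length}) then gives that $f^d(\alpha_i)$ traverses $\alpha_j$.

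There is no real obstacle here; the lemma is essentially a formal unpacking of the definitions of $\mathcal{SG}(f)$, its edge length $s$, and the behavior of graph maps on edge paths. The one subtlety worth tracking is orientation: $\alpha_\ell$ may appear with either orientation inside $f^{d'}(\alpha_i)$, but since ``traverses'' is defined to ignore orientation and $f(\overline{e}) = \overline{f(e)}$, the propagation under $f^{t}$ is insensitive to this choice.
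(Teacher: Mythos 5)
Your proof is correct and matches the paper's approach: both arguments decompose the path $P$ into its constituent edges, apply the definition of $s$ to each edge, and compose the resulting traversal statements by observing that applying $f^t$ to an edge path preserves the appearance (up to orientation) of $f^t$-images of its edges. Your version is slightly more careful in that it makes the induction explicit and isolates the propagation observation, whereas the paper compresses this into a single sentence, but the underlying content is the same.
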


\begin{proof} Suppose a directed path $P$ with $s(P)=d$ has vertices $\mathcal{K}_{1}, \mathcal{K}_{2}, \dots, \mathcal{K}_{k}$ and let $s_i = s([\mathcal{K}_{i},\mathcal{K}_{i+1}])$. Hence $d = \sum_{i=1}^{k}s_{i}$. By definition of $s$, $f^{s_i}(\alpha_i)$ traverses $\alpha_{i+1}$. Hence $f^d(\alpha_{1})=f^{s_k}\circ \dots \circ f^{s_1}(\alpha_1)$ traverses $\alpha_k$. 
\end{proof}
 
 \begin{lem}\label{stack graph connected} $\mathcal{SG}(f)$ is strongly connected and for any $\mathcal{K}_i \in \mathcal{V}(\mathcal{SG}(f))$, we have $$\mathcal{V}(\mathcal{SG}(f)) \subseteq B_{n-n_i}(\mathcal{K}_i).$$
 \end{lem}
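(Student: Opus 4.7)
My plan is to prove the two assertions in sequence: first establish that $\mathcal{SG}(f)$ is strongly connected, and then derive the uniform distance bound via a directed spanning tree.

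For strong connectivity, I would argue by induction on $k$ that whenever $f^k(\alpha_i)$ traverses an edge of a stack $\mathcal{K}_\ell$, there is a directed path from $\mathcal{K}_i$ to $\mathcal{K}_\ell$ in $\mathcal{SG}(f)$. The base case $k=0$ is immediate. For the inductive step, write $f^{k-1}(\alpha_i) = c_1 \cdots c_m$ and let $\mathcal{K}(c_r)$ denote the stack containing $c_r$; any edge of $f^k(\alpha_i) = f(c_1)\cdots f(c_m)$ lies inside some $f(c_r)$. If $c_r$ is not the final edge of $\mathcal{K}(c_r)$, then by Lemma \ref{stack format} $c_r$ is non-mixing and non-surplus, so $f(c_r)$ is a single edge of $\mathcal{K}(c_r)$, and the inductive hypothesis directly supplies a path from $\mathcal{K}_i$ to $\mathcal{K}(c_r) = \mathcal{K}_\ell$. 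If $c_r$ is the final edge of $\mathcal{K}(c_r)$, the defining condition of $\mathcal{SG}(f)$ provides a directed edge $[\mathcal{K}(c_r), \mathcal{K}_\ell]$, which I append to the inductive path from $\mathcal{K}_i$ to $\mathcal{K}(c_r)$. Irreducibility of $T(f)$ then guarantees some $k$ with $f^k(\alpha_i)$ traversing $\alpha_j \in \mathcal{K}_j$, completing this half.

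For the distance bound, I would first observe that every directed edge $[\mathcal{K}_a, \mathcal{K}_b]$ of $\mathcal{SG}(f)$ has $s$-length at most $n_b$. Indeed, if $f(\alpha_a)$ traverses $c = f^t(e_b)$ with $0 \leq t \leq n_b - 1$, then by Lemma \ref{stack format} the iterates $f^{t+1}(e_b), \ldots, f^{n_b - 2}(e_b)$ are non-mixing and non-surplus, so each is a single edge of $\mathcal{K}_b$, and $f^{n_b - t}(\alpha_a)$ traverses $\alpha_b$, yielding $s([\mathcal{K}_a, \mathcal{K}_b]) \leq n_b - t \leq n_b$. Using strong connectivity, I would then build a directed spanning out-tree $T$ of $\mathcal{SG}(f)$ rooted at $\mathcal{K}_i$ by BFS, so that each non-root vertex $\mathcal{K}_j$ has a unique incoming tree edge of $s$-length at most $n_j$. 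Summing over the $p-1$ tree edges gives total $s$-length at most $\sum_{j \neq i} n_j = n - n_i$, and the directed tree path from $\mathcal{K}_i$ to any $\mathcal{K}_j$ uses only a subset of these edges, so has $s$-length at most $n - n_i$, placing $\mathcal{K}_j$ in $B_{n - n_i}(\mathcal{K}_i)$.

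The main subtlety to watch for is ensuring the inductive step covers final edges that are surplus as well as those that are mixing, since a surplus final edge can map into a stack other than its own. Fortunately the definition of $\mathcal{SG}(f)$ accommodates both cases uniformly — it only requires that $f(\alpha)$ \emph{contain} an edge in the target stack — so the same argument handles each, and the per-edge bound $s \leq n_b$ likewise does not distinguish between the two types.
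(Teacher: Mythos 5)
Your proof is correct and follows essentially the same approach as the paper's: irreducibility of $T(f)$ together with the stack structure from Lemma \ref{stack format} gives strong connectivity, and the per-edge bound $s(E) \leq n_t$ (where $\mathcal{K}_t$ is the terminal vertex of $E$) summed over a structure in which each vertex other than $\mathcal{K}_i$ appears at most once as a terminal vertex yields the distance bound. The only cosmetic differences are that you forward-induct on powers of $f$ where the paper backward-traces from the power $s$ supplied by irreducibility, and that you package the distance bound via a BFS out-tree where the paper extracts a simple directed path; both lead to the same count $\sum_{t\neq i} n_t = n - n_i$.
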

 
 \begin{proof}  Let $\mathcal{K}_i, \mathcal{K}_j \in \mathcal{V}(\mathcal{SG}(f))$. Since $f$ is irreducible, there is a power $s$ such that $f^s(\alpha_i)$ traverses $\alpha_j$. 
 \begin{itemize}
 \item Let $b_s$ be either $\alpha_j$ or $\overline{\alpha_j}$, whichever appears in $f^s(\alpha_i)$. 
 \item Let $b_{s-1}$ be a single edge in $f^{s-1}(\alpha_i)$ such that $b_s$ appears in $f(b_{s-1})$. 
 \item For $2 \leq t \leq s$, let $b_{s-t}$ be a single edge in $f^{s-t}(\alpha_i)$ such that $b_{s-t+1}$ appears in $f(b_{s-t})$.  
 \end{itemize}
 Hence $b_0 = \alpha_i$, and $f(b_t)$ contains $b_{t+1}$ for all $0 \leq t \leq s-1$. Whenever $b_t$ is a non-final edge, $f(b_t) = b_{t+1}$, so both are in the same stack. Whenever $b_t$ is a final edge, $f(b_{t})$ containing $b_{t+1}$ implies there is an edge in $\mathcal{SG}(f)$ from the the stack containing $b_t$ to the stack containing $b_{t+1}$. Following the sequence of stacks containing the edges $\{b_t\}_{t=0}^s$ gives a directed path in $\mathcal{SG}(f)$ from $\mathcal{K}_i$ to $\mathcal{K}_j$. Thus, $\mathcal{SG}(f)$ is strongly connected. 
 
 \vspace{0.1cm} 
 Let $\mathcal{K}_i, \mathcal{K}_j \in \mathcal{V}(\mathcal{SG}(f))$. If $\mathcal{K}_j =\mathcal{K}_i$, it is immediate that $\mathcal{K}_j \in B_{n-n_i}(\mathcal{K}_i)$. Suppose $\mathcal{K}_j \neq \mathcal{K}_i$. Since $\mathcal{SG}(f)$ is strongly connected, there is a path $P$ in $\mathcal{SG}(f)$ from $\mathcal{K}_i$ to $\mathcal{K}_j$. Choose $P$ so that every vertex in $P$ appears only once. Since each vertex in $P$ appears only once, we have at most one edge with terminal vertex $\mathcal{K}$ for each $\mathcal{K} \in \mathcal{V}(\mathcal{SG}(f))$. Moreover, since $P$ starts at $\mathcal{K}_i$ and ends at $\mathcal{K}_j \neq \mathcal{K}_i$, no edge in $P$ has terminal vertex $\mathcal{K}_i$. Observe that for any edge $E \in \mathcal{E}^+\mathcal{SG}(f)$, $s(E) \leq n_t$ where $\mathcal{K}_t$ is the terminal vertex of $E$. Thus,
  \begin{align*} s(P) = &\  \sum_{E \in P} s(E) \leq  \sum_{t \neq i} n_t   =  \ n-n_i. 
 \end{align*}
 
 \noindent Therefore $\mathcal{K}_j \in B_{n-n_i}(\mathcal{K}_i)$. Since $j$ is arbitrary, $\mathcal{V}(\mathcal{SG}(f) )\subseteq B_{n-n_i}(\mathcal{K}_i)$.   \end{proof}
 
 \begin{lem}\label{weight and word length}For any $d \in \mathbb{Z}_{\geq0}$, we have
 $$|f^{d+1}(\alpha_i)| \geq 1+ \sum_{\mathcal{K}_j \in B_d(\mathcal{K}_i)} \omega(\mathcal{K}_j).$$
 \end{lem}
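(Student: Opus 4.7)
My plan is to prove the bound by telescoping the length of $f^{d+1}(\alpha_i)$ and accounting one iterate at a time for each traversal of a final edge. The key observation is that for any edge path $u$ in $\Gamma$,
\[
|f(u)|-|u|\;=\;\sum_{e\text{ in }u}\bigl(|f(e)|-1\bigr)\;=\;\sum_{j=1}^{p} N_j(u)\cdot \omega(\mathcal{K}_j),
\]
where $N_j(u)$ counts the occurrences of $\alpha_j$ or $\overline{\alpha_j}$ in $u$. The first equality is immediate from Notation \ref{word length} (edge paths are not tightened under $f$), and the second uses Lemma \ref{stack format}: every non-final edge is non-mixing so it contributes $|f(e)|-1=0$, while each final edge $\alpha_j$ contributes $|f(\alpha_j)|-1=\omega(\mathcal{K}_j)$ (this gives $0$ for surplus final edges, as it should).

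Applying this identity iteratively to $f^t(\alpha_i)$ for $t=0,\ldots,d$ and starting from $|\alpha_i|=1$ yields the telescoping formula
\[
|f^{d+1}(\alpha_i)|\;=\;1+\sum_{t=0}^{d}\sum_{j=1}^{p} N_j\bigl(f^t(\alpha_i)\bigr)\cdot \omega(\mathcal{K}_j).
\]
Now for each $\mathcal{K}_j\in B_d(\mathcal{K}_i)$ I fix a directed path $P_j$ in $\mathcal{SG}(f)$ from $\mathcal{K}_i$ to $\mathcal{K}_j$ with $s(P_j)\leq d$ (taking the trivial path for $\mathcal{K}_j=\mathcal{K}_i$, so $s(P_j)=0$). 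By Lemma \ref{stack paths}, $f^{s(P_j)}(\alpha_i)$ traverses $\alpha_j$, so $N_j(f^{s(P_j)}(\alpha_i))\geq 1$. Since every term in the double sum is nonnegative, I retain exactly one distinguished term per stack in $B_d(\mathcal{K}_i)$ (these terms are distinct because the $j$'s differ):
\[
\sum_{t=0}^{d}\sum_{j=1}^{p} N_j\bigl(f^t(\alpha_i)\bigr)\cdot \omega(\mathcal{K}_j)\;\geq\;\sum_{\mathcal{K}_j\in B_d(\mathcal{K}_i)} N_j\bigl(f^{s(P_j)}(\alpha_i)\bigr)\cdot \omega(\mathcal{K}_j)\;\geq\;\sum_{\mathcal{K}_j\in B_d(\mathcal{K}_i)} \omega(\mathcal{K}_j),
\]
which combines with the telescoping formula to give the claimed inequality.

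The only step that requires genuine care is the edgewise identity for $|f(u)|-|u|$: one must verify that the contribution of every edge really is captured by $\omega(\mathcal{K}_j)$ on a final edge and $0$ elsewhere. This is where Lemma \ref{stack format} does all the work, since it asserts precisely that mixing edges coincide with final edges of stacks. After that, the ball $B_d(\mathcal{K}_i)$ is tailor-made for Lemma \ref{stack paths} to deliver a traversal of each $\alpha_j$ within the first $d$ iterates, and the rest is clean bookkeeping.
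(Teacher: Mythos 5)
Your proof is correct. The identity $|f(u)|-|u|=\sum_{j}N_j(u)\,\omega(\mathcal{K}_j)$ is valid: the first equality is because $f(u)$ is the concatenation $f(e_1)\cdots f(e_k)$ with no cancellation (Notation \ref{word length}), and the second holds because by Lemma \ref{stack format} every non-final edge is non-mixing, hence contributes $|f(e)|-1=0$, while each occurrence of $\alpha_j$ or $\overline{\alpha_j}$ contributes $\omega(\mathcal{K}_j)$. The telescoping and the selection of one distinguished $(t,j)$ term per stack in $B_d(\mathcal{K}_i)$, with $t=s(P_j)\le d$ justified by Lemma \ref{stack paths}, is clean and complete; the trivial-path case $\mathcal{K}_j=\mathcal{K}_i$ is handled properly.

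The paper proves the same bound by induction on $d$: at each step it identifies the stacks newly reached in $B_d(\mathcal{K}_i)\setminus B_{d-1}(\mathcal{K}_i)$, uses Lemma \ref{stack paths} to find traversals of their final edges at time exactly $d$, and adds their weights to the inductive estimate. Your version unrolls this induction into a single telescoping sum over $t=0,\ldots,d$ and picks out one traversal per stack at a possibly earlier time $s(P_j)$ rather than at the ``first-reaching'' time. Substantively the two arguments rely on the same two inputs --- Lemma \ref{stack format} (non-final edges have unit image) and Lemma \ref{stack paths} (ball membership produces a traversal) --- so they are close relatives; your route just trades the inductive bookkeeping for a global identity, which makes the non-negativity of the discarded terms do the work that the induction hypothesis does in the paper. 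Either is a legitimate write-up.
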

 
 \begin{proof} 
 We prove this by induction on $d$. When $d=0$, $B_0(\mathcal{K}_i) = \{\mathcal{K}_i\}$, so 
 \begin{align*}|f(\alpha_i)| &= 1+|f(\alpha_i)|-1\\
 & =1+ \sum_{\mathcal{K}_j \in B_0(S)} \omega(\mathcal{K}_j).
 \end{align*}
 Now, let $d \geq 1$ and suppose the inequality holds for $d-1$. Let  $B_d(\mathcal{K}_i)-B_{d-1}(\mathcal{K}_i)=\{\mathcal{K}_{t_1},\dots,\mathcal{K}_{t_k}\}$. Then for each $t_q$, there is a directed path from $\mathcal{K}_i$ to $\mathcal{K}_{t_q}$ with length exactly $d$, so by Lemma \ref{stack paths}, $f^{d}(\alpha_i)$ traverses $\alpha_{t_q}$.
   
 Let $\delta = |f^d(\alpha_i)|$ and let $\alpha_{t_1},\dots,\alpha_{t_k},b_{k+1},\dots,b_{\delta}$ denote the edges appearing in $f^d(\alpha_i)$ (with multiplicity). Thus, by our induction hypothesis,
 \begin{align*}|f^{d+1}(\alpha_i)| & = |f(\alpha_{t_1})|+\dots+|f(\alpha_{t_k})|+ |f(b_{k+1})| + \dots + |f(b_\delta)| \\
 & \geq (\omega(\mathcal{K}_{t_1}) +1) + \dots + (\omega(\mathcal{K}_{t_k}) + 1) + (\delta-k) \\
 & = \delta + \sum_{q=1}^k \omega(\mathcal{K}_{t_q})\\
& = |f^d(\alpha_i)|  + \sum_{t=1}^k \omega(\mathcal{K}_{t_q}) \\
  & \geq 1+ \sum_{\mathcal{K}_t \in B_{d-1}(\mathcal{K}_i)} \omega(\mathcal{K}_t) + \sum_{q=1}^k \omega(\mathcal{K}_{t_q})  \\
 & =1+ \sum_{\mathcal{K}_t \in B_d(\mathcal{K}_i)} \omega(\mathcal{K}_t)
 \end{align*}
 This completes the proof of the lemma.  \end{proof}

\bigskip
\section{Lower Bound Proof}\label{lower bound proof}
 
\begin{mainthmA}\label{lower bound}
\textit{ Suppose $f:\Gamma \rightarrow \Gamma$ is an irreducible homotopy equivalence self graph map with fold decomposition consisting of $m$ total folds. Let $n = |\mathcal{E}\Gamma|$. Then
$$(m+1)^{\frac{1}{n}} \leq \lambda_f$$
where $\lambda_f$ is the largest eigenvalue of the transition matrix of $f$.}
\end{mainthmA}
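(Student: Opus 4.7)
The plan is to show that $|f^n(e)| \geq m+1$ for every edge $e$ of $\Gamma$, and then deduce the bound on $\lambda_f$ from an elementary inequality for non-negative matrices.

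First I would dispose of the non-expanding case: by Lemma \ref{helpful facts}, if $f$ is not expanding then $m = 0$ and $\lambda_f = 1$, so the inequality reads $1 \leq 1$. Thus assume $f$ is expanding, so the machinery of Section \ref{stack graphs} applies. Let $\mathcal{K}_1,\dots,\mathcal{K}_p$ denote the stacks of $f$, with $n_i = |\mathcal{K}_i|$ and $\alpha_i$ the final edge of $\mathcal{K}_i$. Fix any stack $\mathcal{K}_i$. By Lemma \ref{stack graph connected}, every stack lies in the directed ball $B_{n-n_i}(\mathcal{K}_i)$, so applying Lemma \ref{weight and word length} with $d = n - n_i$ and using Observation \ref{weight} gives
$$|f^{n-n_i+1}(\alpha_i)| \;\geq\; 1+\sum_{j=1}^{p}\omega(\mathcal{K}_j) \;\geq\; m+1.$$

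Next I would propagate this bound from the final edges $\alpha_i$ to every edge of $\Gamma$. By Lemma \ref{stack format}, any edge $e \in \mathcal{K}_i$ has the form $e = f^t(e')$ for some root edge $e'$ and $0 \leq t \leq n_i - 1$, so $f^{n_i - 1 - t}(e) = \alpha_i$ and hence $f^{n-t}(e) = f^{n-n_i+1}(\alpha_i)$ has at least $m+1$ edges. Since $|f(u)| \geq |u|$ for every edge path $u$ (Notation \ref{word length}), applying $f$ a further $t$ times cannot decrease the number of edges, giving $|f^n(e)| \geq m+1$ for every $e \in \mathcal{E}\Gamma$.

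Finally I would invoke the standard fact that for any non-negative matrix $A$, the spectral radius satisfies $\rho(A) \geq \min_i \sum_j A_{ij}$; this follows from iterating $A\mathbf{1} \geq r\mathbf{1}$, where $r$ is the minimum row sum, and applying Gelfand's formula. The $i$-th row sum of $T(f)^n$ equals $|f^n(e_i)|$, and $\rho(T(f)^n) = \lambda_f^n$, so
$$\lambda_f^n \;\geq\; \min_{e \in \mathcal{E}\Gamma}|f^n(e)| \;\geq\; m+1,$$
yielding $(m+1)^{1/n} \leq \lambda_f$. The main hurdle is the propagation step, which requires the stack structure to extend the bound from the special final edges $\alpha_i$ to every edge; the stack graph machinery of Section \ref{stack graphs} was essentially designed to make this step possible.
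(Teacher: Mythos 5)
Your proof is correct, and it diverges from the paper's argument in its final two steps. The core engine is the same: both proofs invoke Lemma \ref{stack graph connected}, Lemma \ref{weight and word length}, and Observation \ref{weight} to conclude $|f^{(n-n_i)+1}(\alpha_i)| \geq m+1$. From there, the paper instead introduces the Perron--Frobenius metric $\ell$ from Lemma \ref{helpful facts}, observes that the shortest edge $e$ in that metric must be a \emph{root} edge of some stack (since $\ell(f(a)) = \lambda_f\,\ell(a) > \ell(a)$ forces any edge in the image of $f$ to be strictly longer than its preimage), applies the stack-graph bound only to that distinguished edge to get $|f^n(e)| \geq m+1$, and then closes with $\lambda_f^n = \ell(f^n(e)) \geq |f^n(e)|$ because every edge has $\ell$-length at least $\ell(e)=1$. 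You, by contrast, propagate the bound from the final edge $\alpha_i$ to \emph{every} edge of $\G$ using the stack decomposition of Lemma \ref{stack format} together with the monotonicity $|f(u)| \geq |u|$, and then finish with the elementary fact that the spectral radius of a non-negative matrix is bounded below by its minimum row sum (noting that the row sums of $T(f)^n$ are exactly the $|f^n(e_i)|$). Your route proves the slightly stronger intermediate statement that $|f^n(e)| \geq m+1$ holds for \emph{every} edge, at the cost of the extra propagation step; the paper's choice of the $\ell$-shortest edge lets it skip the propagation entirely and avoids invoking any matrix inequality beyond the Perron--Frobenius eigenvector already set up in Lemma \ref{helpful facts}. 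Both are valid, and it is worth noting that your final min-row-sum inequality is closely related in spirit to the alternative proof via Lemma \ref{hamsong} that the paper gives for the primitive case.
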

 \smallskip 
 \begin{proof} If $f$ is not expanding, then by Lemma \ref{helpful facts} we have $m=0$ and $\lambda_f=1$, so the inequality holds. We now assume $f$ is expanding. \\

\vspace{-0.2cm}
Let $\lambda= \lambda_f$ and let $\ell$ be the metric on $\G$ from Lemma \ref{helpful facts}, 
so that $f$ is uniformly $\lambda-$expanding on $(\G,\ell)$. 
Let $e \in \mathcal{E}\G$ be an edge with the shortest length $\ell(e)$. Uniformly scale $\ell$ so that $\ell(e)=1$. 

\vspace{0.1cm}
We claim that $e$ must be the root edge in some stack of $f$. Otherwise, $e = f(a)$ for some edge $a$. Since $f$ is uniformly $\lambda-$expanding, $\ell(e) = \lambda \ell (a)$. Since $\lambda >1$, $\ell (e)> \ell (a)$, contradicting that $e$ is the shortest edge. 

\vspace{0.1cm}
Without loss of generality, suppose $e$ is the root edge in stack $\mathcal{K}_1$. Let $n_1$ be the number of edges in $\mathcal{K}_1$, so $f^{n_1-1}(e)$ is the final edge of  $\mathcal{K}_1$. 

\vspace{0.1cm}
\noindent By Lemma \ref{stack graph connected}, $\mathcal{V}(\mathcal{SG}(f)) \subseteq B_{n-n_1}(\mathcal{K}_1)$. Thus by Lemma \ref{weight and word length} with $d=n-n_1$, 
 $$|f^{n}(e)|=|f^{(n-n_1)+1}(f^{n_1-1}(e))| \geq 1+  \sum_{j=1}^p \omega(\mathcal{K}_j),$$
where $p$ is the number of stacks in $f$. By observation \ref{weight}, 
$$\sum_{j=1}^p \omega(\mathcal{K}_j)\geq m$$
Since every edge has length greater than or equal to $\ell (e) =1$, 
 \begin{align*} \lambda^n  = \ell (f^n(e))&  \geq |f^n(e)| \\
 &  \geq 1+ \sum_{j=1}^p \omega(\mathcal{K}_j)  \geq m+1
 \end{align*}
 Therefore, $(m+1)^{\frac{1}{n}} \leq \lambda_f.$
  \end{proof}
  
  Using the following lemma, (Lemma 3.1 in \cite{hamsong}), we provide an alternative proof of Theorem \hyperref[lower bound]{A} for irreducible homotopy equivalence self graph map with primitive transition matrices. In particular, if $f$ is an i.t.t. representative of a fully irreducible outer automorphism, then $T(f)$ is primitive (Lemma 2.4(2) in \cite{k14}).  
  
  \begin{lem}\label{hamsong} \cite{hamsong} Suppose $M$ is a non-negative integral primitive $n \times n$ matrix with $\lambda > 1$ its largest eigenvalue. Then
$$\lambda^n \geq |M|-n+1$$
where $|M|$ denotes the sum of the entries of M.
  \end{lem}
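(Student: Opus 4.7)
I would interpret $M$ as the adjacency matrix of a directed multigraph $G$ on vertex set $\{1,\dots,n\}$ with $M_{ij}$ parallel edges from $i$ to $j$, so that $|M|$ is the total edge count of $G$ and $(M^k)_{ij}$ counts directed walks of length $k$ from $i$ to $j$. Primitivity of $M$ implies irreducibility, hence $G$ is strongly connected; this is the only nontrivial structural input. The strategy is to sandwich $\lambda^n$ between an upper and a lower bound on the number of length-$n$ walks in $G$ from a carefully chosen basepoint $i_0$.

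\textbf{Upper bound (Perron--Frobenius).} Let $v>0$ be the right Perron eigenvector, $Mv=\lambda v$, normalized so that $\min_i v_i = 1$, and let $i_0$ achieve this minimum. Since $v \geq \mathbf{1}$ entrywise and $M \geq 0$, iterating gives $M^n \mathbf{1} \leq M^n v = \lambda^n v$. Taking the $i_0$-coordinate,
$$
(M^n \mathbf{1})_{i_0} \;\leq\; \lambda^n v_{i_0} \;=\; \lambda^n,
$$
and the left side is exactly the number of length-$n$ walks in $G$ starting at $i_0$.

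\textbf{Lower bound (arborescence argument).} Fix a spanning out-arborescence $T$ of $G$ rooted at $i_0$, which exists by strong connectedness; $T$ has exactly $n-1$ edges, leaving $|M|-n+1$ \emph{extra} edges. For each extra edge $e=(u,w)$, let $d_u \in \{0,1,\dots,n-1\}$ be the depth of $u$ in $T$ and define
$$
W_e \;:=\; \bigl(\text{unique $T$-path from } i_0 \text{ to } u\bigr) \cdot e \cdot \bigl(\text{any length-}(n-d_u-1) \text{ walk in } G \text{ starting at } w\bigr).
$$
An extension of the required length exists because strong connectedness forces every vertex of $G$ to have out-degree at least one, so walks of arbitrary length can be built from $w$. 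The walk $W_e$ has length exactly $n$ and starts at $i_0$. For injectivity of $e \mapsto W_e$: the prefix of $W_e$ of length $d_u$ uses only $T$-edges, while $e$ is by construction not a $T$-edge, so $e$ is the \emph{first} non-$T$-edge in $W_e$. Hence $W_e = W_{e'}$ forces $e=e'$. This yields at least $|M|-n+1$ distinct length-$n$ walks from $i_0$, and combining with the upper bound gives $|M|-n+1 \leq \lambda^n$.

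\textbf{Where the difficulty sits.} The Perron--Frobenius step is routine; the delicate part is the construction in the lower bound. The key observation, which might not be obvious at first, is that designating each extra edge as the \emph{first} non-$T$-edge of its associated walk delivers injectivity for free, without constraining the extension at all. This turns what could be a fiddly combinatorial counting problem into a one-line check. The degenerate case $n=1$, where $M=[a]$ gives $|M|-n+1 = a = \lambda = \lambda^n$, is handled separately (with equality).
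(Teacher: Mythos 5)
The paper cites this lemma from \cite{hamsong} without reproducing a proof, so there is no in-paper argument to compare against; I evaluate your proof on its own terms, and it is correct. The upper bound, that $(M^n \mathbf{1})_{i_0} \leq \lambda^n v_{i_0} = \lambda^n$ when $v$ is the right Perron eigenvector normalized to have minimum entry $1$ attained at $i_0$, is the standard Perron--Frobenius step, and $(M^n \mathbf{1})_{i_0}$ is indeed the number of length-$n$ walks from $i_0$. The lower bound via a spanning out-arborescence $T$ rooted at $i_0$ (which exists since primitivity implies strong connectedness) is sound: there are exactly $|M|-(n-1)$ non-tree edges, each $e=(u,w)$ yields a length-$n$ walk $W_e$ from $i_0$, and injectivity holds because $e$ is by construction the \emph{first} non-tree edge of $W_e$, so equal walks force equal first non-tree edges. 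One small point to make explicit: ``any length-$(n-d_u-1)$ walk starting at $w$'' should be read as ``fix one such walk,'' so that $e \mapsto W_e$ is a well-defined function; existence of the extension follows from out-degree $\geq 1$ at every vertex for $n \geq 2$, and the $n=1$ case is handled separately as you note. This path-counting route is the natural one and is in the same spirit as the argument in the cited source.
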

  
  \vspace{0.1cm}
  \noindent \textit{Alternative Proof of Theorem \hyperref[lower bound]{A} for i.h.e. maps with primitive transition matrix:} \\
  
  \vspace{-0.2cm}
\noindent Suppose $f$ is an irreducible homotopy equivalence self graph map with $T(f)$ primitive. Since  $|T(f)| = \sum_{e \in \mathcal{E}(\G)} |f(e)|$, and $T(f)$ is non-negative and integral, by Lemma \ref{hamsong}  and Lemma \ref{folds, edge images}, 
\begin{align*} \lambda^n  & \geq \Big{(}\sum_{e \in \mathcal{E}\G} (|f(e)|) \Big{)} -n +1 \\
& =  \Big{(}\sum_{e \in \mathcal{E}\G} (|f(e)| - 1) \Big{)} +1 \\
&  \geq m+1. \qedhere
\end{align*}
 Therefore, $(m+1)^{\frac{1}{n}} \leq \lambda_f.$ \hfill $\square$ \\

\bigskip
\vspace{0.5cm}

\section{Latent Symmetry}\label{latent symmetry}

In order for a graph to admit an i.h.e. map with very few folds in its fold decomposition, the graph isomorphism following the folds needs to sufficiently mix the edges. The stack score is designed to measure how much mixing the graph isomorphism can possibly admit, with a smaller stack score indicating more mixing is possible in the graph isomorphism. \\

\begin{df}\label{stack score}(Stack Score) A graph $G$ is a \textit{supergraph} of $\Gamma$ if $\Gamma$ is a subgraph of $G$. 
 Given a supergraph $G$ of $\G$ with $\mathcal{V}G=\mathcal{V}\G$, and $\psi \in $ Aut$(G)$, we define an equivalence relation $\sim_{\psi}$ on $\mathcal{E}\G$ generated by $a \sim_\psi \psi(a)$ whenever $\psi(a) \in \mathcal{E}\G$. 
The \textit{stack score} of a graph $\G$ is 

 \vspace{-0.1cm} 
$$\hspace{-0.2cm}\mathfrak{S}(\G):=\min\{ \text{number of }\sim_{\psi} \text{equivalence classes} \ | \ G \text{ is a supergraph of }\Gamma \text{ with }\mathcal{V}G = \mathcal{V}\G \text{ and }\psi \in \text{Aut}(G) \}$$

\end{df} 
\vspace{0.1cm}
Similarly, let $\mathfrak{O}(\G)$ be the minimum number of $\psi$ edge orbits over all pairs $(G,\psi)$, where $G$ is a supergraph of $\G$ with $\mathcal{V}G = \mathcal{V}\G$ and $\psi \in$ Aut$(G)$. Then $\mathfrak{O}(\G)$ is a similar graph invariant  to $\mathfrak{S}(\G)$. While $\mathfrak{O}(\G)$ is slightly easier to conceptualize and compute, we have $$\mathfrak{O}(\G) \leq \mathfrak{S}(\G)$$ and there are cases when the inequality is strict. Below, Example \ref{stack score ex} gives a graph $\G$ with $\mathfrak{O}(\G)=2$ and $\mathfrak{S}(\G)=3$. \\
\begin{ex}\label{stack score ex} 

\begin{multicols}{2}
Consider the graph $\G$ along with a supergraph $G$ as pictured to the right. Let $\psi_1 \in $ Aut$(G)$ rotate vertices in $G$ clockwise by one and send  
$$\vspace{-0.2cm} x_1 \mapsto x_2 \mapsto x_3 \mapsto x_4 \mapsto x_5 \mapsto x_1$$ 
and $$ c_i \mapsto d_i \mapsto e_i \mapsto a_{i} \mapsto b_i \mapsto c_{i+1}$$

\noindent for $1 \leq i \leq 3$, with the exception that $b_3~\mapsto~c_1$. Then $[c_1]_{\sim_{\psi_1}}=\{c_1, d_1, e_1, a_1, b_1, c_2, d_2\}$ and $a_2, a_3, x_2,x_4$ are each their own equivalence class.

 \columnbreak \textcolor{white}{\ } \vspace{-0.3cm}
\tikzset{every picture/.style={line width=0.75pt}} 

\begin{tikzpicture}[x=0.75pt,y=0.75pt,yscale=-1,xscale=1]

\draw  [color={rgb, 255:red, 0; green, 0; blue, 0 }  ,draw opacity=1 ] (126.02,68.79) -- (109.13,120.5) -- (55.4,120.21) -- (39.07,68.33) -- (82.71,36.55) -- cycle ;
\draw  [color={rgb, 255:red, 0; green, 0; blue, 0 }  ,draw opacity=1 ] (74.19,23.84) .. controls (74.19,16.82) and (78,11.12) .. (82.71,11.12) .. controls (87.41,11.12) and (91.23,16.82) .. (91.23,23.84) .. controls (91.23,30.86) and (87.41,36.55) .. (82.71,36.55) .. controls (78,36.55) and (74.19,30.86) .. (74.19,23.84) -- cycle ;
\draw [color={rgb, 255:red, 0; green, 0; blue, 0 }  ,draw opacity=1 ]   (39.07,68.33) .. controls (22.69,85.94) and (23.72,118.76) .. (55.41,120.21) ;
\draw [color={rgb, 255:red, 0; green, 0; blue, 0 }  ,draw opacity=1 ]   (126.01,68.79) .. controls (132.66,90.1) and (129.31,109.1) .. (109.14,120.5) ;
\draw [color={rgb, 255:red, 0; green, 0; blue, 0 }  ,draw opacity=1 ]   (39.07,68.33) .. controls (6.1,84.67) and (20.16,130.15) .. (55.41,120.21) ;
\draw [color={rgb, 255:red, 0; green, 0; blue, 0 }  ,draw opacity=1 ]   (82.71,36.55) .. controls (107.45,32.17) and (121.95,45.3) .. (126.01,68.79) ;
\draw  [color={rgb, 255:red, 0; green, 0; blue, 0 }  ,draw opacity=1 ] (122.96,125.63) .. controls (126.36,131.15) and (125.68,137.79) .. (121.45,140.47) .. controls (117.22,143.15) and (111.04,140.85) .. (107.64,135.34) .. controls (104.24,129.82) and (104.91,123.18) .. (109.14,120.5) .. controls (113.37,117.82) and (119.56,120.12) .. (122.96,125.63) -- cycle ;
\draw   (279.23,67.72) -- (263.13,119.43) -- (211.91,119.15) -- (196.35,67.26) -- (237.96,35.47) -- cycle ;
\draw   (230.46,22.77) .. controls (230.46,15.75) and (233.82,10.06) .. (237.95,10.06) .. controls (242.08,10.06) and (245.43,15.75) .. (245.43,22.77) .. controls (245.43,29.79) and (242.08,35.48) .. (237.95,35.48) .. controls (233.82,35.48) and (230.46,29.79) .. (230.46,22.77) -- cycle ;
\draw    (196.36,67.26) .. controls (189.82,83.6) and (193.01,106) .. (211.93,119.14) ;
\draw    (279.22,67.72) .. controls (285.56,89.03) and (282.37,108.03) .. (263.14,119.43) ;
\draw    (196.36,67.26) .. controls (163.91,85.94) and (173.21,120.95) .. (211.93,119.14) ;
\draw    (237.72,36.4) .. controls (258.76,34.06) and (274.71,44.91) .. (279.22,67.72) ;
\draw   (275.16,125.32) .. controls (278.41,130.84) and (278.28,137.14) .. (274.88,139.39) .. controls (271.48,141.65) and (266.1,139.01) .. (262.86,133.5) .. controls (259.62,127.99) and (259.74,121.69) .. (263.14,119.43) .. controls (266.54,117.17) and (271.92,119.81) .. (275.16,125.32) -- cycle ;
\draw   (289.82,59.51) .. controls (295.92,59.09) and (301.07,62.08) .. (301.32,66.19) .. controls (301.58,70.31) and (296.83,73.98) .. (290.73,74.41) .. controls (284.63,74.83) and (279.48,71.83) .. (279.22,67.72) .. controls (278.97,63.61) and (283.71,59.93) .. (289.82,59.51) -- cycle ;
\draw   (213.41,133.25) .. controls (210.58,139.01) and (205.33,142.04) .. (201.68,140.02) .. controls (198.03,137.99) and (197.37,131.67) .. (200.19,125.91) .. controls (203.02,120.14) and (208.28,117.11) .. (211.93,119.14) .. controls (215.57,121.17) and (216.24,127.49) .. (213.41,133.25) -- cycle ;
\draw   (189.55,55.13) .. controls (195.06,57.97) and (198.11,63.4) .. (196.36,67.26) .. controls (194.6,71.12) and (188.72,71.94) .. (183.2,69.1) .. controls (177.69,66.26) and (174.65,60.83) .. (176.4,56.97) .. controls (178.15,53.11) and (184.04,52.29) .. (189.55,55.13) -- cycle ;
\draw    (196.36,67.26) .. controls (204.52,46.46) and (214.67,38.24) .. (237.72,36.4) ;
\draw    (211.93,119.14) .. controls (233.86,129.75) and (242.16,131.11) .. (263.23,119.37) ;
\draw    (196.36,67.26) .. controls (187.68,29.04) and (198.01,15.91) .. (237.72,36.4) ;
\draw    (237.72,36.4) .. controls (269.3,19.19) and (289.97,22.48) .. (279.22,67.72) ;
\draw    (279.22,67.72) .. controls (302,94.5) and (305,110.5) .. (263.23,119.37) ;
\draw    (211.93,119.14) .. controls (229.01,153.78) and (246.57,154.87) .. (263.23,119.37) ;

\draw (72.5,163.95) node [anchor=north west][inner sep=0.75pt]  [font=\large]  {$\Gamma $};
\draw (229.66,165.12) node [anchor=north west][inner sep=0.75pt]  [font=\large]  {$G$};
\draw (49.74,84.77) node [anchor=north west][inner sep=0.75pt]  [font=\scriptsize]  {$a_{1}$};
\draw (31.14,88.05) node [anchor=north west][inner sep=0.75pt]  [font=\scriptsize]  {$a_{2}$};
\draw (4.27,93.53) node [anchor=north west][inner sep=0.75pt]  [font=\scriptsize]  {$a_{3}$};
\draw (59.04,53.04) node [anchor=north west][inner sep=0.75pt]  [font=\scriptsize]  {$b_{1}$};
\draw (94.15,51.95) node [anchor=north west][inner sep=0.75pt]  [font=\scriptsize]  {$c_{1}$};
\draw (109.65,26.78) node [anchor=north west][inner sep=0.75pt]  [font=\scriptsize]  {$c_{2}$};
\draw (100.36,81.49) node [anchor=north west][inner sep=0.75pt]  [font=\scriptsize]  {$d_{1}$};
\draw (130.33,91.34) node [anchor=north west][inner sep=0.75pt]  [font=\scriptsize]  {$d_{2}$};
\draw (77.62,104.47) node [anchor=north west][inner sep=0.75pt]  [font=\scriptsize]  {$e_{1}$};
\draw (58,10.37) node [anchor=north west][inner sep=0.75pt]  [font=\scriptsize]  {$x_{2}$};
\draw (126.24,124.47) node [anchor=north west][inner sep=0.75pt]  [font=\scriptsize]  {$x_{4}$};
\draw (204.39,83.68) node [anchor=north west][inner sep=0.75pt]  [font=\scriptsize]  {$a_{1}$};
\draw (179.59,90.24) node [anchor=north west][inner sep=0.75pt]  [font=\scriptsize]  {$a_{2}$};
\draw (159.96,92.43) node [anchor=north west][inner sep=0.75pt]  [font=\scriptsize]  {$a_{3}$};
\draw (213.69,51.95) node [anchor=north west][inner sep=0.75pt]  [font=\scriptsize]  {$b_{1}$};
\draw (248.8,50.85) node [anchor=north west][inner sep=0.75pt]  [font=\scriptsize]  {$c_{1}$};
\draw (263.27,28.97) node [anchor=north west][inner sep=0.75pt]  [font=\scriptsize]  {$c_{2}$};
\draw (255.02,80.4) node [anchor=north west][inner sep=0.75pt]  [font=\scriptsize]  {$d_{1}$};
\draw (280.85,89.15) node [anchor=north west][inner sep=0.75pt]  [font=\scriptsize]  {$d_{2}$};
\draw (232.27,103.37) node [anchor=north west][inner sep=0.75pt]  [font=\scriptsize]  {$e_{1}$};
\draw (212.66,9.27) node [anchor=north west][inner sep=0.75pt]  [font=\scriptsize]  {$x_{2}$};
\draw (280.89,123.37) node [anchor=north west][inner sep=0.75pt]  [font=\scriptsize]  {$x_{4}$};
\draw (159.96,46.48) node [anchor=north west][inner sep=0.75pt]  [font=\scriptsize]  {$x_{1}$};
\draw (303.58,57.42) node [anchor=north west][inner sep=0.75pt]  [font=\scriptsize]  {$x_{3}$};
\draw (181.66,130.73) node [anchor=north west][inner sep=0.75pt]  [font=\scriptsize]  {$x_{5}$};
\draw (231.24,127.44) node [anchor=north west][inner sep=0.75pt]  [font=\scriptsize]  {$e_{2}$};
\draw (232.27,146.05) node [anchor=north west][inner sep=0.75pt]  [font=\scriptsize]  {$e_{3}$};
\draw (298.58,94.62) node [anchor=north west][inner sep=0.75pt]  [font=\scriptsize]  {$d_{3}$};
\draw (275.67,14.74) node [anchor=north west][inner sep=0.75pt]  [font=\scriptsize]  {$c_{3}$};
\draw (197.16,30.06) node [anchor=north west][inner sep=0.75pt]  [font=\scriptsize]  {$b_{2}$};
\draw (181.66,12.56) node [anchor=north west][inner sep=0.75pt]  [font=\scriptsize]  {$b_{3}$};

\end{tikzpicture}

 \end{multicols}
 \newpage
\noindent \vspace{-0.25cm} \hspace{-0.25cm} Below, $(\G, \sim_{\psi_1})$ shows $\G$ with edges colored and dashed to distinguish the $\sim_{\psi_1}$ equivalence classes. 
 \begin{multicols}{2}
\textcolor{white}{ \ } \vspace{-0.1cm}
\hspace{-0.2cm}\input{figures/stack_score_example_1}
 \columnbreak
 
 \noindent Let $\psi_2 \in $ Aut$(G)$ rotate vertices in $G$ clockwise by two and send  
$$\vspace{-0.2cm} x_1 \mapsto x_3 \mapsto x_5 \mapsto x_2 \mapsto x_4 \mapsto x_1$$ 
and $$ d_i \mapsto a_i \mapsto c_i \mapsto e_{i} \mapsto b_i \mapsto d_{i+1}$$

\noindent for $1 \leq i \leq 3$, with the exception that $b_3~\mapsto~d_1$. Then  $[d_1]_{\sim_{\psi_2}}=\{d_1,a_1,c_1,e_1,b_1,d_2,a_2,c_2\}$, $[x_2 ]_{\sim_{\psi_2}}=\{x_2,x_4\}$, and $[a_3]_{\sim_{\psi_2}}=\{a_3\}$. 

\end{multicols}

 \noindent Above, $(\G, \sim_{\psi_2})$ shows $\G$ with edges colored and dashed to distinguish the $\sim_{\psi_2}$ equivalence classes. For this graph $\G$, $\sim_{\psi_2}$ gives the minimal number of equivalence classes, so $\mathfrak{S}(\G)=3$.\\

\end{ex}


\vspace{-0.5cm}
\begin{mainthmB}\label{symmetry and folds}
\textit{Any irreducible expanding homotopy equivalence self graph map $f:\Gamma \rightarrow \Gamma$ which is periodic on the vertex set of $\G$ must have at least $\mathfrak{S}(\Gamma)$ folds. }
\end{mainthmB}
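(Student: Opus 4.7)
The proof will combine Lemma~\ref{folds, edge images} with a direct construction of a witness for the stack score. Since $f$ is periodic on the vertex set, the proof of Lemma~\ref{folds, edge images} yields both that there are no surplus edges (so by Lemma~\ref{stack format} every final edge is mixing) and that the number of stacks $p$ satisfies $p\le m$. It therefore suffices to show $\mathfrak{S}(\Gamma)\le p$, which I will accomplish by exhibiting a supergraph $G$ of $\Gamma$ with $\mathcal{V}G=\mathcal{V}\Gamma$ and a $\psi\in\mathrm{Aut}(G)$ whose $\sim_\psi$--equivalence classes on $\mathcal{E}\Gamma$ are precisely the stacks of $f$.

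Write each stack as $\mathcal{K}_i=\{e_i,f(e_i),\dots,\alpha_i\}$ with root $e_i$, mixing final edge $\alpha_i=f^{n_i-1}(e_i)$, and size $n_i$. Let $N_v$ denote the length of the orbit of $v$ under $f|_{\mathcal{V}\Gamma}$, and set $L_i$ to be the smallest positive multiple of $\mathrm{lcm}(N_{\iota(e_i)},N_{\tau(e_i)})$ with $L_i\ge n_i$; let $T_i:=L_i-n_i\ge 0$. I will build $G$ by adjoining, for each $i$, a chain of new edges $c_i^{(1)},\dots,c_i^{(T_i)}$ with $\iota(c_i^{(t)}):=f^t(\iota(\alpha_i))$ and $\tau(c_i^{(t)}):=f^t(\tau(\alpha_i))$, and then set $\psi|_{\mathcal{V}G}=f|_{\mathcal{V}\Gamma}$, $\psi(e)=f(e)$ on every non-mixing $e\in\mathcal{E}\Gamma$, $\psi(\alpha_i)=c_i^{(1)}$ when $T_i\ge 1$ (and $\psi(\alpha_i)=e_i$ when $T_i=0$), $\psi(c_i^{(t)})=c_i^{(t+1)}$ for $1\le t<T_i$, and $\psi(c_i^{(T_i)})=e_i$.

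I then verify $\psi\in\mathrm{Aut}(G)$ in two parts. For bijectivity on $\mathcal{E}G$, the same argument as in the proof of Lemma~\ref{folds, edge images} shows that the absence of surplus edges forces $f$ to be injective on oriented edges, so $\psi$ bijects the $n-p$ non-mixing edges onto the $n-p$ non-root edges of $\Gamma$; separately, for each $i$ the ordered chain $\alpha_i\mapsto c_i^{(1)}\mapsto\cdots\mapsto c_i^{(T_i)}\mapsto e_i$ bijects $\{\alpha_i\}\cup\{c_i^{(t)}\}$ onto $\{c_i^{(t)}\}\cup\{e_i\}$. Incidence preservation is straightforward for every arrow except possibly $\psi(c_i^{(T_i)})=e_i$; here one computes
\[
\psi(\iota(c_i^{(T_i)}))=f(f^{T_i}(\iota(\alpha_i)))=f^{T_i+n_i}(\iota(e_i))=f^{L_i}(\iota(e_i))=\iota(e_i),
\]
using that $L_i$ is a multiple of $N_{\iota(e_i)}$, and symmetrically for $\tau$.

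To count $\sim_\psi$--classes in $\mathcal{E}\Gamma$, note that within each stack the relations $e_i\sim_\psi f(e_i)\sim_\psi\cdots\sim_\psi\alpha_i$ hold immediately because $\psi$ agrees with $f$ on non-mixing edges. The image $\psi(\alpha_i)$ is either $c_i^{(1)}\notin\mathcal{E}\Gamma$ (producing no new relation) or the edge $e_i$ (already in $\mathcal{K}_i$), so no cross-stack relation is introduced. This produces at most $p$ classes, hence $\mathfrak{S}(\Gamma)\le p\le m$. The main obstacle is closing up $\psi$ into an automorphism, since the naive attempt $\psi(\alpha_i)=e_i$ typically fails because $f^{n_i}$ need not fix the endpoints of $e_i$; padding the $\psi$--orbit through $\Gamma$ with the auxiliary chain of length $T_i=L_i-n_i$ is exactly what synchronizes it with the vertex-orbit structure of $f|_{\mathcal{V}\Gamma}$ and makes everything close up.
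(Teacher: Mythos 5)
Your proof is correct and takes essentially the same route as the paper's: you reduce to showing $\mathfrak{S}(\Gamma)\le p$ using Lemma~\ref{folds, edge images} to get $p\le m$, and then construct the witnessing supergraph by padding each stack's $f$-orbit with auxiliary edges between $f^j(\iota(e_i))$ and $f^j(\tau(e_i))$ until the chain closes up compatibly with the vertex periods, letting $\psi$ cycle each padded chain. Your $L_i$ (least common multiple of the endpoint periods, taken $\ge n_i$) plays the role of the paper's $q_i$ (a multiple of $k_it_i$ with $q_i\ge n_i$), and your $c_i^{(t)}$ are the paper's $b_i^j$ under the reindexing $j=t+n_i-1$, with the same incidence verification and the same count of $p$ many $\sim_\psi$-classes.
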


\begin{proof}

Suppose $f$ has $p$ stacks of sizes $n_1,n_2,\dots,n_p$ and root edges $e_1,\dots,e_p$. Then $f$ is given by:
$$f : \begin{cases} e_1 \mapsto f(e_1)\mapsto  \dots \mapsto f^{n_1-1}(e_1) \mapsto f^{n_1}(e_1) \\
 e_2 \mapsto f(e_2) \mapsto \dots \mapsto f^{n_2-1}(e_2) \mapsto f^{n_2}(e_2) \\
 \ \ \ \ \ \ \ \ \ \  \ \ \ \ \ \ \ \ \ \ \ \vdots \\
 e_p \mapsto f(e_p) \mapsto  \dots \mapsto f^{n_p-1}(e_p) \mapsto f^{n_p}(e_p) \\
\end{cases}$$
\vspace{0.1cm}

For each $i \in \{1 ,\dots, p\}$, let $v_i:=\iota(e_i)$ and $w_i:=\tau(e_i)$. Since $f$ is periodic on $\mathcal{V}\G$, there is some power $k_i$ of $f$ such that $f^{k_i}(v_i)=v_i$ and some power $t_i$ such that $f^{t_i}(w_i)=w_i$. Let $q_i$ be a multiple of $k_it_i$ such that $n_i \leq q_i$. Build a supergraph $G$ of $\G$ by adding edges $b_i^j$ for $n_i \leq j \leq q_i-1$  joining $f^{j}(v_i)$ to $f^j(w_i)$. Define $\psi$ on  the vertices of $G$ by $\psi_V=f_V$ and on the edges of $G$ by

\tikzset{every picture/.style={line width=0.75pt}} 

\hspace{-0.7cm}\begin{tikzpicture}[x=0.75pt,y=0.75pt,yscale=-1,xscale=1]

\draw [color={rgb, 255:red, 65; green, 117; blue, 5 }  ,draw opacity=1 ]   (567.14,28.96) -- (567.14,67.17) ;
\draw [color={rgb, 255:red, 65; green, 117; blue, 5 }  ,draw opacity=1 ]   (615.35,42.72) -- (584.49,71.76) ;
\draw [color={rgb, 255:red, 65; green, 117; blue, 5 }  ,draw opacity=1 ]   (643.31,77.87) -- (597.99,86.28) ;
\draw [color={rgb, 255:red, 65; green, 117; blue, 5 }  ,draw opacity=1 ]   (570.03,146.65) -- (570.03,109.2) ;
\draw [color={rgb, 255:red, 139; green, 29; blue, 185 }  ,draw opacity=1 ]   (552.67,104.62) -- (526.64,136.72) ;
\draw [color={rgb, 255:red, 139; green, 29; blue, 185 }  ,draw opacity=1 ]   (540.14,90.86) -- (496.75,107.68) ;
\draw [color={rgb, 255:red, 139; green, 29; blue, 185 }  ,draw opacity=1 ]   (549.78,71.76) -- (521.82,40.42) ;
\draw  [fill={rgb, 255:red, 0; green, 0; blue, 0 }  ,fill opacity=1 ] (565.65,30.87) .. controls (565.06,30.12) and (565.33,29.13) .. (566.25,28.65) .. controls (567.17,28.16) and (568.39,28.38) .. (568.97,29.13) .. controls (569.56,29.88) and (569.29,30.88) .. (568.37,31.36) .. controls (567.45,31.84) and (566.23,31.62) .. (565.65,30.87) -- cycle ;
\draw  [fill={rgb, 255:red, 0; green, 0; blue, 0 }  ,fill opacity=1 ] (565.65,67.55) .. controls (565.06,66.81) and (565.33,65.81) .. (566.25,65.33) .. controls (567.17,64.85) and (568.39,65.07) .. (568.97,65.81) .. controls (569.56,66.56) and (569.29,67.56) .. (568.37,68.04) .. controls (567.45,68.52) and (566.23,68.3) .. (565.65,67.55) -- cycle ;
\draw  [fill={rgb, 255:red, 0; green, 0; blue, 0 }  ,fill opacity=1 ] (583.97,72.14) .. controls (583.38,71.39) and (583.65,70.39) .. (584.57,69.91) .. controls (585.49,69.43) and (586.71,69.65) .. (587.29,70.4) .. controls (587.88,71.15) and (587.61,72.15) .. (586.69,72.63) .. controls (585.77,73.11) and (584.55,72.89) .. (583.97,72.14) -- cycle ;
\draw  [fill={rgb, 255:red, 0; green, 0; blue, 0 }  ,fill opacity=1 ] (613.86,43.86) .. controls (613.27,43.11) and (613.54,42.12) .. (614.46,41.64) .. controls (615.38,41.16) and (616.6,41.37) .. (617.18,42.12) .. controls (617.77,42.87) and (617.5,43.87) .. (616.58,44.35) .. controls (615.66,44.83) and (614.44,44.61) .. (613.86,43.86) -- cycle ;
\draw  [fill={rgb, 255:red, 0; green, 0; blue, 0 }  ,fill opacity=1 ] (596.5,87.42) .. controls (595.92,86.67) and (596.19,85.68) .. (597.11,85.2) .. controls (598.02,84.72) and (599.24,84.94) .. (599.83,85.68) .. controls (600.41,86.43) and (600.14,87.43) .. (599.22,87.91) .. controls (598.31,88.39) and (597.09,88.17) .. (596.5,87.42) -- cycle ;
\draw  [fill={rgb, 255:red, 0; green, 0; blue, 0 }  ,fill opacity=1 ] (641.82,78.25) .. controls (641.24,77.5) and (641.51,76.51) .. (642.42,76.03) .. controls (643.34,75.55) and (644.56,75.76) .. (645.14,76.51) .. controls (645.73,77.26) and (645.46,78.26) .. (644.54,78.74) .. controls (643.62,79.22) and (642.41,79) .. (641.82,78.25) -- cycle ;
\draw  [fill={rgb, 255:red, 0; green, 0; blue, 0 }  ,fill opacity=1 ] (568.54,111.11) .. controls (567.96,110.37) and (568.23,109.37) .. (569.14,108.89) .. controls (570.06,108.41) and (571.28,108.63) .. (571.86,109.38) .. controls (572.45,110.12) and (572.18,111.12) .. (571.26,111.6) .. controls (570.34,112.08) and (569.13,111.86) .. (568.54,111.11) -- cycle ;
\draw  [fill={rgb, 255:red, 0; green, 0; blue, 0 }  ,fill opacity=1 ] (568.54,147.03) .. controls (567.96,146.28) and (568.23,145.29) .. (569.14,144.81) .. controls (570.06,144.33) and (571.28,144.54) .. (571.86,145.29) .. controls (572.45,146.04) and (572.18,147.04) .. (571.26,147.52) .. controls (570.34,148) and (569.13,147.78) .. (568.54,147.03) -- cycle ;
\draw  [fill={rgb, 255:red, 0; green, 0; blue, 0 }  ,fill opacity=1 ] (550.22,105.77) .. controls (549.64,105.02) and (549.91,104.02) .. (550.82,103.54) .. controls (551.74,103.06) and (552.96,103.28) .. (553.54,104.03) .. controls (554.13,104.78) and (553.86,105.77) .. (552.94,106.25) .. controls (552.02,106.73) and (550.81,106.51) .. (550.22,105.77) -- cycle ;
\draw  [fill={rgb, 255:red, 0; green, 0; blue, 0 }  ,fill opacity=1 ] (525.15,137.1) .. controls (524.57,136.35) and (524.84,135.35) .. (525.75,134.87) .. controls (526.67,134.39) and (527.89,134.61) .. (528.47,135.36) .. controls (529.06,136.11) and (528.79,137.1) .. (527.87,137.59) .. controls (526.95,138.07) and (525.74,137.85) .. (525.15,137.1) -- cycle ;
\draw  [fill={rgb, 255:red, 0; green, 0; blue, 0 }  ,fill opacity=1 ] (537.69,92.01) .. controls (537.1,91.26) and (537.37,90.26) .. (538.29,89.78) .. controls (539.21,89.3) and (540.42,89.52) .. (541.01,90.27) .. controls (541.59,91.02) and (541.32,92.02) .. (540.41,92.5) .. controls (539.49,92.98) and (538.27,92.76) .. (537.69,92.01) -- cycle ;
\draw  [fill={rgb, 255:red, 0; green, 0; blue, 0 }  ,fill opacity=1 ] (496.22,108.06) .. controls (495.64,107.31) and (495.91,106.31) .. (496.83,105.83) .. controls (497.75,105.35) and (498.96,105.57) .. (499.55,106.32) .. controls (500.13,107.07) and (499.86,108.06) .. (498.94,108.54) .. controls (498.03,109.03) and (496.81,108.81) .. (496.22,108.06) -- cycle ;
\draw  [fill={rgb, 255:red, 0; green, 0; blue, 0 }  ,fill opacity=1 ] (548.29,72.14) .. controls (547.71,71.39) and (547.98,70.39) .. (548.9,69.91) .. controls (549.81,69.43) and (551.03,69.65) .. (551.62,70.4) .. controls (552.2,71.15) and (551.93,72.15) .. (551.01,72.63) .. controls (550.09,73.11) and (548.88,72.89) .. (548.29,72.14) -- cycle ;
\draw  [fill={rgb, 255:red, 0; green, 0; blue, 0 }  ,fill opacity=1 ] (520.33,41.57) .. controls (519.75,40.82) and (520.02,39.82) .. (520.93,39.34) .. controls (521.85,38.86) and (523.07,39.08) .. (523.65,39.83) .. controls (524.24,40.58) and (523.97,41.58) .. (523.05,42.06) .. controls (522.13,42.54) and (520.91,42.32) .. (520.33,41.57) -- cycle ;
\draw    (574.85,21.32) .. controls (596.72,19.87) and (601.37,25.25) .. (609.95,31.65) ;
\draw [shift={(611.49,32.78)}, rotate = 215.5] [color={rgb, 255:red, 0; green, 0; blue, 0 }  ][line width=0.75]    (10.93,-3.29) .. controls (6.95,-1.4) and (3.31,-0.3) .. (0,0) .. controls (3.31,0.3) and (6.95,1.4) .. (10.93,3.29)   ;

\draw (550.51,40.7) node [anchor=north west][inner sep=0.75pt]  [font=\footnotesize,color={rgb, 255:red, 65; green, 117; blue, 5 }  ,opacity=1 ]  {$e_{1}$};
\draw (575.28,42.99) node [anchor=north west][inner sep=0.75pt]  [font=\footnotesize,color={rgb, 255:red, 65; green, 117; blue, 5 }  ,opacity=1 ]  {$f( e_{1})$};
\draw (595.42,65.04) node [anchor=north west][inner sep=0.75pt]  [font=\footnotesize,color={rgb, 255:red, 65; green, 117; blue, 5 }  ,opacity=1 ]  {$f^{2}( e_{1})$};
\draw (623.85,92.23) node [anchor=north west][inner sep=0.75pt]  [rotate=-103.52]  {$\dotsc $};
\draw (574.9,113.83) node [anchor=north west][inner sep=0.75pt]  [font=\footnotesize,color={rgb, 255:red, 65; green, 117; blue, 5 }  ,opacity=1 ]  {$f^{n_{1} -1}( e_{1})$};
\draw (540.76,117.01) node [anchor=north west][inner sep=0.75pt]  [font=\footnotesize,color={rgb, 255:red, 139; green, 29; blue, 185 }  ,opacity=1 ]  {$b_{1}^{n_{1}}$};
\draw (514.53,97.43) node [anchor=north west][inner sep=0.75pt]  [font=\footnotesize,color={rgb, 255:red, 139; green, 29; blue, 185 }  ,opacity=1 ]  {$b_{1}^{n_{1} +1}$};
\draw (526.14,72.1) node [anchor=north west][inner sep=0.75pt]  [color={rgb, 255:red, 139; green, 29; blue, 185 }  ,opacity=1 ,rotate=-92.58]  {$\dotsc $};
\draw (509.73,56.93) node [anchor=north west][inner sep=0.75pt]  [font=\footnotesize,color={rgb, 255:red, 139; green, 29; blue, 185 }  ,opacity=1 ]  {$b_{1}^{q_{1} -1}$};
\draw (594.11,2.67) node [anchor=north west][inner sep=0.75pt]    {$\psi $};
\draw (12,26.4) node [anchor=north west][inner sep=0.75pt]  [font=\normalsize]  {$ \begin{array}{l}
\psi \ :\ \begin{cases}
e_{1} \ \mapsto f( e_{1}) \mapsto \ \dotsc \ \mapsto \ f^{n_{1} -1}( e_{1}) \mapsto b_{1}^{n_{1}} \ \mapsto \dotsc \ \mapsto \ b_{1}^{q_{1} -1} \mapsto e_{1}\\
e_{2} \ \mapsto f( e_{2}) \mapsto \ \dotsc \ \mapsto \ f^{n_{2} -1}( e_{2}) \mapsto b_{2}^{n_{2}} \ \mapsto \dotsc \ \mapsto \ b_{2}^{q_{2} -1} \mapsto e_{2}\\
\ \ \ \ \ \ \ \ \ \ \ \ \ \ \ \ \ \ \ \ \ \ \ \ \ \ \ \ \ \ \ \ \ \ \ \ \ \ \ \ \ \ \ \ \ \ \ \ \ \ \ \ \ \ \ \ \ \vdots \\
e_{p} \ \mapsto f( e_{p}) \mapsto \ \dotsc \ \mapsto \ f^{n_{p} -1}( e_{p}) \mapsto b_{p}^{n_{p}} \ \mapsto \dotsc \ \mapsto \ b_{p}^{q_{p} -1} \mapsto e_{p}
\end{cases}\\
\ \ 
\end{array}$};
\draw (561.12,16.48) node [anchor=north west][inner sep=0.75pt]  [font=\scriptsize]  {$v_{1}$};
\draw (561.07,69.69) node [anchor=north west][inner sep=0.75pt]  [font=\scriptsize]  {$w_{1}$};

\end{tikzpicture}
\vspace{0.2cm}

\noindent We claim $\psi$ an automorphism of $G$. By definition, $\psi_E$ is bijection from $\mathcal{E}G$ to itself. We also have $\psi_V = f_V$ is a bijection by our hypothesis on $f$. It remains to show that $\psi$ is a graph map. For any edge $b_i^j$, with $n_i \leq j \leq q_i-2$ and $ 1 \leq i \leq p$, we have
 \begin{align*} \psi(\iota(b_i^j)) & = \psi(f^j(v_i)) \\
 & = f^{j+1}(v_i) \\
& = \iota(b_i^{j+1}) \\
& = \iota(\psi(b_i^j)). \end{align*} 
For any edge $b_i^{q_i-1}$ with $1 \leq i \leq p$, we have
\begin{align*}
\psi(\iota(b_i^{q_i-1})) & = \psi(f^{q_i-1}(v_i)) \\
 & = f^{q_i}(v_i) \\
 & = v_i \\
& = \iota(e_i) \\
& = \iota(\psi(b_i^{q_i-1})). \end{align*} 
For the edges $f^{n_i-1}(e_i)$ with $1 \leq i \leq p$, we have
\begin{align*}
\psi(\iota(f^{n_i-1}(e_i))) & = \psi(f^{n_i-1}(v_i)) \\
 & = f^{n_i}(v_i) \\
& = \iota(b_i^1) \\
& = \iota(\psi(f^{n_i-1}(e_i))). \end{align*} 

Thus $\psi(\iota(e))=\iota(\psi(e))$ for every edge $e \in \mathcal{E}G$. Similarly, $\psi(\tau(e))=\tau(\psi(e))$, so indeed $\psi$ is a graph map. 

Observe that $\sim_{\psi}$ partitions $\mathcal{E}\G$ into exactly $p$ equivalence classes. 
Hence $\mathfrak{S}(\G) \leq p$. Since $f$ is periodic on the vertices of $\G$, by Lemma \ref{folds, edge images}, $p$ is less than or equal to the number of folds in the fold decomposition of $f$. Hence $f$ has at least $\mathfrak{S}(\G)$ many folds. 
\end{proof}
 
\noindent Theorems \hyperref[lower bound]{A} and \hyperref[symmetry and folds]{B} immediately give the following corollary. 

\begin{cor}\label{main cor}\textit{Let $f :\G \rightarrow \G$ be an  irreducible expanding homotopy equivalence self graph map which is periodic on the vertex set of $\G$. Let $n=|\mathcal{E}\G|$. Then }
$$(\mathfrak{S}(\G)+1)^{\frac{1}{n}} \leq \lambda_f,$$
where $\mathfrak{S}(\G)$ is the stack score of $\G$ and $\lambda_f$ is the leading eigenvalue of the transition matrix of $f$. 
\end{cor}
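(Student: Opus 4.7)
The plan is to combine Theorems A and B directly, as the corollary is essentially the concatenation of the two bounds. First, I would fix a fold decomposition of $f$ with some number $m$ of folds (which exists by Theorem \ref{stallings}, since $f$ is a surjective homotopy equivalence). The hypotheses of Theorem B are satisfied verbatim: $f$ is irreducible, expanding, a homotopy equivalence, and periodic on the vertex set of $\G$. Applying Theorem B then gives $m \geq \mathfrak{S}(\G)$.

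Next, I would invoke Theorem A on the same fold decomposition. The hypotheses of Theorem A require only that $f$ be an irreducible homotopy equivalence self graph map, which holds here; expansion and vertex-periodicity are not needed for Theorem A. We conclude $(m+1)^{1/n} \leq \lambda_f$.

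Finally, since the function $x \mapsto (x+1)^{1/n}$ is monotonically increasing on $\mathbb{Z}_{\geq 0}$, combining $m \geq \mathfrak{S}(\G)$ with the bound from Theorem A yields
\[
(\mathfrak{S}(\G)+1)^{1/n} \leq (m+1)^{1/n} \leq \lambda_f,
\]
which is exactly the desired inequality. There is no real obstacle here beyond checking that the hypotheses of both theorems are simultaneously available, and they are. The proof is essentially a one-line combination, which is why the corollary is advertised in the introduction as being ``directly implied by Theorems A and B.''
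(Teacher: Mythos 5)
Your proof is correct and matches the paper's intended argument exactly: the paper itself states that Theorems A and B ``immediately give'' this corollary, and your chain $(\mathfrak{S}(\G)+1)^{1/n} \leq (m+1)^{1/n} \leq \lambda_f$ is precisely that combination. Your care in checking that a single fixed fold decomposition simultaneously satisfies the hypotheses of both theorems is the only substantive point, and you handle it correctly.
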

 
 \smallskip
 \section{Single Fold Maps}\label{single fold maps}
 

 \begin{multicols}{2}
 \input{figures/polygonals}
\columnbreak

\begin{df}(Polygonal Graph) Let $P_{s,k}$ be a graph with vertex set $\mathcal{V}P_{s,k} = \{v_0, \dots, v_{s-1}\}$ and edges
  $$\mathcal{E}P_{s,k} = \{e_{i}^{j} : \  1 \leq j \leq k, \ 0 \leq i \leq s-1, \},$$
where an edge $e_i^{j}$ joins $v_i$ to $v_{i+1}$, with vertex subscripts taken modulo $s$. We call $P_{s,k}$ the  \textit{$s$-gonal graph of depth $k$}. A  \textit{side} of $P_{s,k}$ is $$\mathfrak{s}_i := \{e_i^{j} \ | \ 1 \leq j \leq k\} \subseteq \mathcal{E}P_{s,k} $$ 
The sides of $P_{s,k}$ partition $\mathcal{E}P_{s,k}$. 
\end{df}
 \end{multicols}
 

Observe that each polygonal graph has an edge transitive automorphism. Hence $\mathfrak{S}(P_{s,k})=1$ for any $s,k \in \mathbb{N}$. The following lemma provides a converse to this statement in the special case that a graph $G$ has an automorphism which is both edge and vertex transitive.

 \begin{lem}\label{transitive} If $G$ is a connected graph and there exists a $\psi \in Aut(G)$ such that the cyclic subgroup of Aut$(G)$ generated by $\psi$, denoted $\langle \psi \rangle$, acts transitively on both $\mathcal{V}G$ and $\mathcal{E}G$, then $G$ is isomorphic to some polygonal graph $P_{s,k}$. 
 \end{lem}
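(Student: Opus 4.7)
The plan is to use vertex-transitivity to identify $\mathcal{V}G$ with $\mathbb{Z}/s\mathbb{Z}$ on which $\psi$ acts by the shift $i \mapsto i+1$, use edge-transitivity to show that every positively oriented edge has a common ``displacement'' $d \in \mathbb{Z}/s\mathbb{Z}$ between its endpoints, invoke connectedness to force either $s=1$ or $\gcd(d,s)=1$, and then relabel vertices so that every edge joins consecutive vertices. Vertex-transitivity then forces the number of edges between each consecutive pair to be a single constant $k$, giving $G \cong P_{s,k}$.

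Set $s = |\mathcal{V}G|$. Because $\langle \psi \rangle$ acts transitively on the $s$-element set $\mathcal{V}G$, the restriction of $\psi$ to vertices must be a single $s$-cycle, so I will label $\mathcal{V}G = \{v_0, \dots, v_{s-1}\}$ with $\psi(v_i) = v_{i+1 \bmod s}$. For each $e \in \mathcal{E}G$, write $\iota(e) = v_{a(e)}$ and $\tau(e) = v_{b(e)}$, and set $\delta(e) := b(e) - a(e) \in \mathbb{Z}/s\mathbb{Z}$. Since $\psi$ is a graph automorphism we have $\iota(\psi(e)) = v_{a(e)+1}$ and $\tau(\psi(e)) = v_{b(e)+1}$, so $\delta(\psi(e)) = \delta(e)$. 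Transitivity of $\langle \psi \rangle$ on $\mathcal{E}G$ then produces a single $d \in \mathbb{Z}/s\mathbb{Z}$ with $\delta(e) = d$ for every $e \in \mathcal{E}G$.

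Every vertex reachable from $v_0$ via an edge path lies in the subgroup $\langle d \rangle \leq \mathbb{Z}/s\mathbb{Z}$ (traversing an edge in either direction shifts the vertex index by $\pm d$), so connectedness gives $\langle d \rangle = \mathbb{Z}/s\mathbb{Z}$: either $s=1$, in which case $G$ is a bouquet of $|\mathcal{E}G|$ loops and so is $P_{1,|\mathcal{E}G|}$, or $\gcd(d,s)=1$. In the latter case, relabel by $u_i := v_{id \bmod s}$; every edge then has endpoints $\{u_j, u_{j+1}\}$ for some $j$, and $\psi$ acts on the relabeled vertices as the shift $u_i \mapsto u_{i + d^{-1}}$, again a single $s$-cycle since $\gcd(d^{-1},s)=1$.

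Finally, let $k$ be the number of edges between $u_0$ and $u_1$. Applying $\psi^j$ bijects these $k$ edges onto the edges joining $u_{jd^{-1}}$ and $u_{jd^{-1}+1}$, and as $j$ ranges over $\mathbb{Z}/s\mathbb{Z}$ the pair $(jd^{-1}, jd^{-1}+1)$ sweeps out every consecutive pair, so each consecutive pair of vertices supports exactly $k$ edges. This yields the isomorphism $G \cong P_{s,k}$. The main bookkeeping subtlety is ensuring that $\psi$ preserves the displacement $d$ rather than flipping it to $-d$, which follows from $\psi$'s compatibility with $\iota$ and $\tau$; the small cases $s \leq 2$, where $d \equiv -d \bmod s$ makes the displacement only defined up to sign, can be checked by direct inspection.
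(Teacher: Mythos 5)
Your proof is correct and follows essentially the same route as the paper's: use $\psi$ to label the vertices cyclically as $v_0,\dots,v_{s-1}$, note that the endpoint displacement of an edge is invariant along $\psi$-orbits, combine edge-transitivity with connectedness to pin down a single displacement $d$ with $s=1$ or $\gcd(d,s)=1$, then relabel and count edges between consecutive vertices. The paper leaves the relabeling $u_i := v_{id}$ and the coprimality step implicit behind the phrase ``since $G$ is connected,'' whereas you spell them out; your closing remark about $s\leq 2$ is not really where the orientation subtlety lives (an automorphism can send a positively oriented edge to a negatively oriented one for any $s$), but since the argument only needs the displacement up to sign, the conclusion is unaffected.
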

 
 \begin{proof} Let $\mathcal{V}G = \{v_0, \dots, v_{s-1}\}$. Since $\langle \psi \rangle $ is transitive on $\mathcal{V}G$, we can assume the vertices are labeled so that $\psi(v_i)= v_{i+1}$, with subscripts taken modulo $s$. Suppose $e$ is an edge joining $v_0$ to $v_j$. Thus for any power $m$, $\psi^m(e)$ is an edge joining $v_{m}$ to $v_{j+m}$. Since $\langle \psi \rangle$ is transitive on $\mathcal{E}G$, $$\{\psi^m(e) | m \in \mathbb{Z}\} = \mathcal{E}G.$$ Hence each $a \in \mathcal{E}G$ joins $v_i$ to $v_{j+i}$ for some $i$. In other words, there is an edge between $v_{i_1}$ and $v_{i_2}$ if and only if $|i_1-i_2| = j$.
 
 Suppose there are precisely $k$ distinct edges in $G$ joining $v_0$ to $v_j$. Since $\psi$ is an automorphism, there must be exactly $k$ edges joining $\psi^m(v_0)=v_m$ to $\psi^m(v_j)=v_{m+j}$ for each power $m$.  To summarize, given any two vertices $v_{i_1}$ and $v_{i_2}$, there are exactly $k$ edges joining $v_{i_1}$ to $v_{i_2}$ if $|i_1-i_2|=j$, and zero edges joining $v_{i_1}$ to $v_{i_2}$ otherwise. Since $G$ is connected, $G$ is isomorphic to $P_{s,k}$.
 \end{proof}
 
 \vspace{0.1cm}
 The following lemma classifies the structure of connected subgraphs of polygonal graphs with stack score equal to 1. In particular, the number of edges in each side of the polygonal graph which are also in the subgraph can vary by at most 1. \\
 
 \begin{lem}\label{sides} Suppose $\G$ is a connected subgraph of $P_{s,k}$ for $s \geq 3$ and there exists an edge transitive automorphism $\psi~\in~\text{Aut}(P_{s,k})$ and an edge $e \in \mathcal{E}\G$ 
such that \begin{align} \{e,\psi(e),\dots, & \psi^{n-1}(e)\} = \mathcal{E}\G.  \label{eq:2} \end{align} 
Let $\mathfrak{s}_0, \dots, \mathfrak{s}_{s-1}$ denote the sides of $P_{s,k}$ and write $n=sm+t$ for $m \in \{1, \dots, k\}$ and $t \in \{0, \dots, m-1\}$. Then
 \begin{itemize}
 \item[(i)] there are precisely $t$ sides such that $|\mathfrak{s}_i \cap \mathcal{E}\G| = m+1$, and
 \item[(ii)] the remaining $s-t$ sides have $|\mathfrak{s}_i \cap \mathcal{E}\G| = m$. 
 \end{itemize}
  \end{lem}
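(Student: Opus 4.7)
The plan is to analyze how $\psi$ acts on the sides of $P_{s,k}$ and then reduce the lemma to an elementary residue count modulo $s$. Since the side $\mathfrak{s}_i$ is precisely the set of edges joining $v_i$ to $v_{i+1}$, and every graph automorphism preserves edge-endpoints, $\psi$ induces a well-defined permutation of $\{\mathfrak{s}_0, \ldots, \mathfrak{s}_{s-1}\}$. Because $\langle \psi \rangle$ is transitive on $\mathcal{E}P_{s,k}$, the induced action on sides is transitive; since $\langle \psi \rangle$ is cyclic and there are $s$ sides, this forces the side-permutation of $\psi$ to be a single $s$-cycle. After relabeling the sides (which does not change the counts (i) and (ii)) I may assume $\psi(\mathfrak{s}_i) = \mathfrak{s}_{i+1 \bmod s}$ and $e \in \mathfrak{s}_0$, so $\psi^j(e) \in \mathfrak{s}_{j \bmod s}$ for every $j \geq 0$.

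Next I will use edge-transitivity of $\langle\psi\rangle$ on the $sk$ edges of $P_{s,k}$ to conclude that the $\psi$-orbit of $e$ has size exactly $sk$. In particular $n \leq sk$, and the $n$ edges $e, \psi(e), \ldots, \psi^{n-1}(e)$ are pairwise distinct, so they enumerate $\mathcal{E}\G$ without repetition. Writing $n = sm + t$, for each $i \in \{0, \ldots, s-1\}$ the intersection $\mathfrak{s}_i \cap \mathcal{E}\G$ equals $\{\psi^j(e) : 0 \leq j < n, \ j \equiv i \pmod s\}$, and a direct count of residues gives $m+1$ such indices when $0 \leq i < t$ and $m$ such indices when $t \leq i < s$, yielding (i) and (ii).

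The only non-trivial step is the first: showing that $\psi$ permutes the sides as a single $s$-cycle. I would handle this by noting that $\psi$ descends to an automorphism of the underlying simple cycle graph $C_s$, so the vertex action of $\psi$ lies in the dihedral group of order $2s$, and then combining edge-transitivity of $\langle \psi \rangle$ (which descends to side-transitivity via the action I set up in the first paragraph) with the general fact that a transitive cyclic subgroup of $S_s$ must be generated by an $s$-cycle. Once this structural point is established, the rest is pure bookkeeping with the division $n = sm + t$.
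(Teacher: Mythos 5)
Your proof is correct and follows the same overall route as the paper's: show $\psi$ descends to a permutation of the sides, relabel so $\psi$ cycles the sides as $\mathfrak{s}_0 \mapsto \mathfrak{s}_1 \mapsto \cdots \mapsto \mathfrak{s}_{s-1} \mapsto \mathfrak{s}_0$, deduce $\psi^j(e)\in\mathfrak{s}_{j\bmod s}$, and count residues mod $s$ in $\{0,\dots,n-1\}$. Where you diverge is in being more careful: the paper simply asserts the relabeling without explaining why the induced side-permutation is a single $s$-cycle, whereas you supply the justification (edge-transitivity of $\langle\psi\rangle$ forces side-transitivity, and a transitive cyclic group acting on $s$ points must have its generator act as an $s$-cycle, since orbits correspond to the disjoint cycles of the permutation). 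You also explicitly note that the $n$ edges $e,\psi(e),\dots,\psi^{n-1}(e)$ are pairwise distinct because $\langle\psi\rangle$ acts freely on the $sk$-element orbit of $e$ and $n\le sk$ — a fact the paper's count silently relies on. These additions strengthen the exposition; the dihedral-group observation is a harmless detour but isn't needed once you invoke the transitive-cyclic-implies-$s$-cycle fact.
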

 \begin{proof}

By the definition of a graph automorphism, $\psi(\iota(a)) = \iota(\psi(a))$ for every $a \in \mathcal{E}^{\pm}P_{s,k}$. Thus $\psi$ descends to a bijection on the sides of $P_{s,k}$. Relabel the sides of $P_{s,k}$ so that $e \in \mathfrak{s}_0$ and $\psi$ on the sides is given by
$$\psi: \mathfrak{s}_0 \mapsto \mathfrak{s}_1 \mapsto \dots \mapsto \mathfrak{s}_{s-1} \mapsto \mathfrak{s}_0.$$
Hence by  \eqref{eq:2}, $$\mathfrak{s}_i \cap \mathcal{E}\G  = \{\psi^{j}(e) \ |  \ j \in \{0,1,\dots,n-1\} \ \text{and} \ j \equiv i \text{ mod }s \}.$$
Therefore, 
$$|\mathfrak{s}_i \cap \mathcal{E}\G|=\begin{cases} m+1 & \text{ if } 1 \leq i \leq t-1 \\
m &\text{ it } t \leq i \leq s-1.
\end{cases}$$
This completes the proof of the lemma.
\end{proof}

\begin{df}\label{almost 3}(Almost 3-gonal graphs) For any $k \in \mathbb{N}$, we define two graphs called the \textit{almost 3-gonal graphs of depth $k$.} \begin{itemize} 
\item[(i)] Let $\Delta_k^-$ be $P_{3,k}$ with edge $e_0^k$ removed. Note that the choice of removed edge does not change the isomorphism class of $\Delta_k^-$. We have $$\text{Rank}(\Delta_k^-)=3k-3.$$
\item[(ii)] Let  $\Delta_k^{+}$ be $P_{3,k+1}$ with edges $e_0^{k+1}$ and $e_1^{k+1}$ removed. The choice of removed edges from two distinct sides of $P_{3,k+1}$ does not change the isomorphism class of $\Delta_k^+$. We have $$\text{Rank}(\Delta_k^+)=3k-1.$$
\end{itemize}
\end{df}

\begin{df} (Rose) For any $r \in \mathbb{N}$, the \textit{rose with $r$ petals} is $R_r=P_{1,r}$. We have Rank$(R_r)=r$. 

\end{df}

\newpage
 
 \begin{mainthmC}\label{single folds}\vspace{-0.2cm}
\begin{multicols}{2}
 \textit{Suppose $\G$ is a connected rank $r$ graph and $f: \G \rightarrow \G$ is a single fold irreducible homotopy equivalence self graph map. 
 Then $\G$ is isomorphic to one of the graphs to the right for some $k \geq 2$. }
 
 \vspace{0.2cm}
 
 \noindent \textit{In particular:
\begin{enumerate}
\item[(i)] if  $r \equiv 0$ mod $3$, then $\G \cong G \in \{R_r, \Delta_k^{-}\}$,
\item[(ii)] if $r \equiv 1$ mod $3$, then $\G \cong R_r$, and
\item[(iii)] if $r \equiv 2$ mod $3$, then  $\G \cong G \in \{R_r, \Delta_k^+\}$,\end{enumerate}
\vspace{0.1cm}
for appropriate values of $k$. }
\columnbreak
\textcolor{white}{ \ }\\
\tikzset{every picture/.style={line width=0.75pt}} 

\hspace{-1.3cm} \vspace{0.3cm} \begin{tikzpicture}[x=0.75pt,y=0.75pt,yscale=-1,xscale=1]

\draw    (56.64,90.7) .. controls (23,37) and (93,38) .. (57.28,92.25) ;
\draw    (57.28,92.25) .. controls (99,48) and (116,108) .. (58.14,94.26) ;
\draw  [fill={rgb, 255:red, 0; green, 0; blue, 0 }  ,fill opacity=1 ][line width=0.75]  (58.98,90.32) .. controls (60.03,91.24) and (60.11,92.85) .. (59.17,93.92) .. controls (58.23,94.98) and (56.62,95.1) .. (55.58,94.17) .. controls (54.53,93.25) and (54.45,91.64) .. (55.39,90.58) .. controls (56.33,89.51) and (57.94,89.4) .. (58.98,90.32) -- cycle ;
\draw [line width=0.75]    (56.28,92.25) .. controls (-13,104) and (21,47) .. (56.64,90.7) ;
\draw    (58.14,94.26) .. controls (55,155) and (5,115) .. (56.28,92.25) ;
\draw    (135.04,105.36) .. controls (153,75) and (155,70) .. (171.04,44.36) ;
\draw    (135.04,105.36) .. controls (172,104) and (174,103) .. (207.04,104.36) ;
\draw    (171.04,44.36) .. controls (188,68) and (191,74) .. (207.04,104.36) ;
\draw    (135.04,105.36) .. controls (133,72) and (142,56) .. (171.04,44.36) ;
\draw    (171.04,44.36) .. controls (203,53) and (212,72) .. (207.04,104.36) ;
\draw    (135.04,105.36) .. controls (161,123) and (189,122) .. (207.04,104.36) ;
\draw    (135.04,105.36) .. controls (116,72) and (140,42) .. (171.04,44.36) ;
\draw    (171.04,44.36) .. controls (209,39) and (226,74) .. (207.04,104.36) ;
\draw    (264.04,101.36) .. controls (282,71) and (284,66) .. (300.04,40.36) ;
\draw    (264.04,101.36) .. controls (301,100) and (303,99) .. (336.04,100.36) ;
\draw    (300.04,40.36) .. controls (317,64) and (320,70) .. (336.04,100.36) ;
\draw    (264.04,101.36) .. controls (259,66) and (265,51) .. (300.04,40.36) ;
\draw    (300.04,40.36) .. controls (334,50) and (345,56) .. (336.04,100.36) ;
\draw    (264.04,101.36) .. controls (285,117) and (312,121) .. (336.04,100.36) ;
\draw    (264.04,101.36) .. controls (275,130) and (325,132) .. (336.04,100.36) ;

\draw (22.5,75.4) node [anchor=north west][inner sep=0.75pt]  [font=\scriptsize]  {$e_{1}$};
\draw (51.5,52.4) node [anchor=north west][inner sep=0.75pt]  [font=\scriptsize]  {$e_{2}$};
\draw (77.5,76.4) node [anchor=north west][inner sep=0.75pt]  [font=\scriptsize]  {$e_{3}$};
\draw (38.5,105.4) node [anchor=north west][inner sep=0.75pt]  [font=\scriptsize]  {$e_{r}$};
\draw (57.26,110.42) node [anchor=north west][inner sep=0.75pt]  [font=\large,rotate=-326.41]  {$\dotsc $};
\draw (46,12.4) node [anchor=north west][inner sep=0.75pt]  [font=\normalsize]  {$R_{r}$};
\draw (165,9.4) node [anchor=north west][inner sep=0.75pt]  [font=\normalsize]  {$\Delta _{k}^{-}$};
\draw (165,90.4) node [anchor=north west][inner sep=0.75pt]  [font=\scriptsize]  {$a_{1}$};
\draw (164,118.4) node [anchor=north west][inner sep=0.75pt]  [font=\scriptsize]  {$a_{k-1}$};
\draw (179.03,103.73) node [anchor=north west][inner sep=0.75pt]  [font=\tiny,rotate=-89.49]  {$\dotsc $};
\draw (179,73.4) node [anchor=north west][inner sep=0.75pt]  [font=\scriptsize]  {$c_{1}$};
\draw (203.24,66.23) node [anchor=north west][inner sep=0.75pt]  [font=\tiny,rotate=-142.02]  {$\dotsc $};
\draw (213,51.4) node [anchor=north west][inner sep=0.75pt]  [font=\scriptsize]  {$c_{k}$};
\draw (155,73.4) node [anchor=north west][inner sep=0.75pt]  [font=\scriptsize]  {$b_{1}$};
\draw (150.39,77.07) node [anchor=north west][inner sep=0.75pt]  [font=\tiny,rotate=-216.17]  {$\dotsc $};
\draw (119,51.4) node [anchor=north west][inner sep=0.75pt]  [font=\scriptsize]  {$b_{k}$};
\draw (292,9.4) node [anchor=north west][inner sep=0.75pt]  [font=\normalsize]  {$\Delta _{k}^{+}$};
\draw (297,85.4) node [anchor=north west][inner sep=0.75pt]  [font=\scriptsize]  {$a_{1}$};
\draw (306.03,99.73) node [anchor=north west][inner sep=0.75pt]  [font=\tiny,rotate=-89.49]  {$\dotsc $};
\draw (307,67.4) node [anchor=north west][inner sep=0.75pt]  [font=\scriptsize]  {$c_{1}$};
\draw (336,47.4) node [anchor=north west][inner sep=0.75pt]  [font=\scriptsize]  {$c_{k}$};
\draw (336.24,62.23) node [anchor=north west][inner sep=0.75pt]  [font=\tiny,rotate=-142.02]  {$\dotsc $};
\draw (284,67.4) node [anchor=north west][inner sep=0.75pt]  [font=\scriptsize]  {$b_{1}$};
\draw (254,48.4) node [anchor=north west][inner sep=0.75pt]  [font=\scriptsize]  {$b_{k}$};
\draw (277.69,71.99) node [anchor=north west][inner sep=0.75pt]  [font=\tiny,rotate=-216.17]  {$\dotsc $};
\draw (293,124.4) node [anchor=north west][inner sep=0.75pt]  [font=\scriptsize]  {$a_{k+1}$};

\end{tikzpicture}
\end{multicols}
\end{mainthmC}

To prove this theorem, we first we argue that $\G$ satisfies the hypotheses of Lemma \ref{sides}. Next, we show that $\G$ must be a subgraph $P_{1,k}$ or $P_{3,k}$. Finally, we determine which subgraphs of $P_{3,k}$ are admissible. 

\begin{proof} We can write $f = h \circ f_1$, where $f_1: \G \rightarrow \G'$ is a fold and $h:\G' \rightarrow \G$ is a graph isomorphism. Since $\G'$ must be isomorphic to $\G$, the fold $f_1$ must be a proper full fold, as complete and partial folds change the number of vertices of $\G'$. Hence $f$ must be periodic on the vertex set. Moreover, since $f$ has a fold, $f$ is expanding. Thus by Theorem \hyperref[symmetry and folds]{B}, $\mathfrak{S}(\G) = 1$.


By the definition of a stack score, there exists a supergraph $G$ of $\G$ and an automorphism $\psi~\in~\text{Aut}(G)$ such that $\sim_{\psi}$ partitions the edges of $\G$ into a single set. By the proof of Theorem \hyperref[symmetry and folds]{B}, we can assume $\psi$ can be written:
\begin{align} \psi: e \mapsto f(e) \mapsto f^2(e) \mapsto \dots \mapsto f^{n-1}(e) \mapsto b_1 \mapsto \dots \mapsto b_j  \mapsto e  \label{eq:3} \end{align}
where $\{e,f(e),\dots, f^{n-1}(e)\} = \mathcal{E}\G$ and $\{b_1, \dots, b_j \} = \mathcal{E}G - \mathcal{E} \G$. 
 Hence $\psi_V = f_V$, and  $\langle \psi \rangle$ acts transitively on $\mathcal{E}G$. \\

\noindent \textit{Claim:} $\langle \psi \rangle$ also acts transitively on $\mathcal{V}G$. \\

\noindent \textit{Proof of Claim:} Suppose $\langle \psi \rangle $ does not act transitively on $\mathcal{V}G$. By Theorem 2.1 in \cite{graphbook} 
$G$ is bipartite and the action of $\langle \psi \rangle $ on $\mathcal{V}G$ has two orbits, $X$ and $Y$, which form the partition of $\mathcal{V}G$. Suppose $f_1$ is a proper full fold of $e_1$ over $e_0$. 
Assume $\iota(e_1)=\iota(e_0) \in X$ and $\tau(e_1),\tau(e_0) \in Y$.  


\begin{multicols}{2} 

 Since $f_1$ is the identity on $\mathcal{V}\G$, $\psi_V=f_V$, and the sets $X$ and $Y$ are invariant under $\psi$, we have 
 $$\iota(h(e_1')),  \tau(h(e_1') \in Y.$$ 
 However, $X$ and $Y$ form the bipartition of $\mathcal{V}G$, so this is a contradiction. Hence $\langle \psi \rangle$ acts transitively on $\mathcal{V}G$.  \hfill $\diamond$
 
 \columnbreak 
 
 \tikzset{every picture/.style={line width=0.75pt}} 

\begin{tikzpicture}[x=0.75pt,y=0.75pt,yscale=-1,xscale=1]

\draw [color={rgb, 255:red, 0; green, 0; blue, 0 }  ,draw opacity=1 ][line width=1.5]    (27,92) -- (48,49) ;
\draw [shift={(33.64,78.41)}, rotate = 296.03] [color={rgb, 255:red, 0; green, 0; blue, 0 }  ,draw opacity=1 ][line width=1.5]    (14.21,-4.28) .. controls (9.04,-1.82) and (4.3,-0.39) .. (0,0) .. controls (4.3,0.39) and (9.04,1.82) .. (14.21,4.28)   ;
\draw [color={rgb, 255:red, 0; green, 0; blue, 0 }  ,draw opacity=1 ][line width=1.5]    (71,92) -- (48,49) ;
\draw [shift={(63.65,78.26)}, rotate = 241.86] [color={rgb, 255:red, 0; green, 0; blue, 0 }  ,draw opacity=1 ][line width=1.5]    (14.21,-4.28) .. controls (9.04,-1.82) and (4.3,-0.39) .. (0,0) .. controls (4.3,0.39) and (9.04,1.82) .. (14.21,4.28)   ;
\draw  [color={rgb, 255:red, 74; green, 144; blue, 226 }  ,draw opacity=1 ][fill={rgb, 255:red, 74; green, 144; blue, 226 }  ,fill opacity=1 ] (44.8,50.65) .. controls (43.84,49.1) and (44.43,46.94) .. (46.11,45.82) .. controls (47.8,44.71) and (49.95,45.07) .. (50.91,46.62) .. controls (51.87,48.18) and (51.28,50.34) .. (49.6,51.46) .. controls (47.91,52.57) and (45.77,52.21) .. (44.8,50.65) -- cycle ;
\draw    (93.37,75.97) -- (120.49,75.97) ;
\draw [shift={(122.49,75.97)}, rotate = 180] [color={rgb, 255:red, 0; green, 0; blue, 0 }  ][line width=0.75]    (10.93,-3.29) .. controls (6.95,-1.4) and (3.31,-0.3) .. (0,0) .. controls (3.31,0.3) and (6.95,1.4) .. (10.93,3.29)   ;
\draw [color={rgb, 255:red, 0; green, 0; blue, 0 }  ,draw opacity=1 ][line width=1.5]    (141,92) -- (163,47) ;
\draw [shift={(148.13,77.41)}, rotate = 296.05] [color={rgb, 255:red, 0; green, 0; blue, 0 }  ,draw opacity=1 ][line width=1.5]    (14.21,-4.28) .. controls (9.04,-1.82) and (4.3,-0.39) .. (0,0) .. controls (4.3,0.39) and (9.04,1.82) .. (14.21,4.28)   ;
\draw [color={rgb, 255:red, 0; green, 0; blue, 0 }  ,draw opacity=1 ][line width=1.5]    (187,92) -- (141,92) ;
\draw [shift={(172.8,92)}, rotate = 180] [color={rgb, 255:red, 0; green, 0; blue, 0 }  ,draw opacity=1 ][line width=1.5]    (14.21,-4.28) .. controls (9.04,-1.82) and (4.3,-0.39) .. (0,0) .. controls (4.3,0.39) and (9.04,1.82) .. (14.21,4.28)   ;
\draw  [color={rgb, 255:red, 208; green, 2; blue, 27 }  ,draw opacity=1 ][fill={rgb, 255:red, 208; green, 2; blue, 27 }  ,fill opacity=1 ] (68.1,94) .. controls (67.14,92.44) and (67.73,90.28) .. (69.41,89.17) .. controls (71.1,88.06) and (73.24,88.41) .. (74.21,89.97) .. controls (75.17,91.53) and (74.58,93.69) .. (72.9,94.8) .. controls (71.21,95.91) and (69.06,95.56) .. (68.1,94) -- cycle ;
\draw  [color={rgb, 255:red, 208; green, 2; blue, 27 }  ,draw opacity=1 ][fill={rgb, 255:red, 208; green, 2; blue, 27 }  ,fill opacity=1 ] (23.83,94) .. controls (22.87,92.44) and (23.46,90.28) .. (25.15,89.17) .. controls (26.83,88.06) and (28.98,88.41) .. (29.94,89.97) .. controls (30.9,91.53) and (30.31,93.69) .. (28.63,94.8) .. controls (26.94,95.91) and (24.8,95.56) .. (23.83,94) -- cycle ;
\draw  [color={rgb, 255:red, 208; green, 2; blue, 27 }  ,draw opacity=1 ][fill={rgb, 255:red, 208; green, 2; blue, 27 }  ,fill opacity=1 ] (138,94) .. controls (137.04,92.44) and (137.63,90.28) .. (139.31,89.17) .. controls (141,88.06) and (143.14,88.41) .. (144.1,89.97) .. controls (145.07,91.53) and (144.48,93.69) .. (142.79,94.8) .. controls (141.11,95.91) and (138.96,95.56) .. (138,94) -- cycle ;
\draw  [color={rgb, 255:red, 74; green, 144; blue, 226 }  ,draw opacity=1 ][fill={rgb, 255:red, 74; green, 144; blue, 226 }  ,fill opacity=1 ] (160.14,49.45) .. controls (159.17,47.9) and (159.76,45.73) .. (161.45,44.62) .. controls (163.13,43.51) and (165.28,43.87) .. (166.24,45.42) .. controls (167.2,46.98) and (166.61,49.14) .. (164.93,50.25) .. controls (163.24,51.36) and (161.1,51.01) .. (160.14,49.45) -- cycle ;
\draw  [color={rgb, 255:red, 208; green, 2; blue, 27 }  ,draw opacity=1 ][fill={rgb, 255:red, 208; green, 2; blue, 27 }  ,fill opacity=1 ] (183.44,94) .. controls (182.47,92.44) and (183.06,90.28) .. (184.75,89.17) .. controls (186.43,88.06) and (188.58,88.41) .. (189.54,89.97) .. controls (190.5,91.53) and (189.91,93.69) .. (188.23,94.8) .. controls (186.54,95.91) and (184.4,95.56) .. (183.44,94) -- cycle ;
\draw [color={rgb, 255:red, 0; green, 0; blue, 0 }  ,draw opacity=1 ][line width=1.5]    (261,92.43) -- (283,47.43) ;
\draw [shift={(268.13,77.83)}, rotate = 296.05] [color={rgb, 255:red, 0; green, 0; blue, 0 }  ,draw opacity=1 ][line width=1.5]    (14.21,-4.28) .. controls (9.04,-1.82) and (4.3,-0.39) .. (0,0) .. controls (4.3,0.39) and (9.04,1.82) .. (14.21,4.28)   ;
\draw [color={rgb, 255:red, 0; green, 0; blue, 0 }  ,draw opacity=1 ][line width=1.5]    (307,92.43) -- (261,92.43) ;
\draw [shift={(292.8,92.43)}, rotate = 180] [color={rgb, 255:red, 0; green, 0; blue, 0 }  ,draw opacity=1 ][line width=1.5]    (14.21,-4.28) .. controls (9.04,-1.82) and (4.3,-0.39) .. (0,0) .. controls (4.3,0.39) and (9.04,1.82) .. (14.21,4.28)   ;
\draw  [color={rgb, 255:red, 208; green, 2; blue, 27 }  ,draw opacity=1 ][fill={rgb, 255:red, 208; green, 2; blue, 27 }  ,fill opacity=1 ] (258,94.43) .. controls (257.04,92.87) and (257.63,90.71) .. (259.31,89.6) .. controls (261,88.48) and (263.14,88.84) .. (264.1,90.4) .. controls (265.07,91.95) and (264.48,94.12) .. (262.79,95.23) .. controls (261.11,96.34) and (258.96,95.98) .. (258,94.43) -- cycle ;
\draw  [color={rgb, 255:red, 74; green, 144; blue, 226 }  ,draw opacity=1 ][fill={rgb, 255:red, 74; green, 144; blue, 226 }  ,fill opacity=1 ] (280.14,49.88) .. controls (279.17,48.32) and (279.76,46.16) .. (281.45,45.05) .. controls (283.13,43.93) and (285.28,44.29) .. (286.24,45.85) .. controls (287.2,47.4) and (286.61,49.57) .. (284.93,50.68) .. controls (283.24,51.79) and (281.1,51.43) .. (280.14,49.88) -- cycle ;
\draw  [color={rgb, 255:red, 208; green, 2; blue, 27 }  ,draw opacity=1 ][fill={rgb, 255:red, 208; green, 2; blue, 27 }  ,fill opacity=1 ] (303.44,94.43) .. controls (302.47,92.87) and (303.06,90.71) .. (304.75,89.6) .. controls (306.43,88.48) and (308.58,88.84) .. (309.54,90.4) .. controls (310.5,91.95) and (309.91,94.12) .. (308.23,95.23) .. controls (306.54,96.34) and (304.4,95.98) .. (303.44,94.43) -- cycle ;
\draw    (197.37,74.97) -- (224.49,74.97) ;
\draw [shift={(226.49,74.97)}, rotate = 180] [color={rgb, 255:red, 0; green, 0; blue, 0 }  ][line width=0.75]    (10.93,-3.29) .. controls (6.95,-1.4) and (3.31,-0.3) .. (0,0) .. controls (3.31,0.3) and (6.95,1.4) .. (10.93,3.29)   ;
\draw    (81,41) .. controls (135.45,21.2) and (177.16,23.94) .. (232.32,41.46) ;
\draw [shift={(234,42)}, rotate = 197.82] [color={rgb, 255:red, 0; green, 0; blue, 0 }  ][line width=0.75]    (10.93,-3.29) .. controls (6.95,-1.4) and (3.31,-0.3) .. (0,0) .. controls (3.31,0.3) and (6.95,1.4) .. (10.93,3.29)   ;

\draw (20.49,46.64) node [anchor=north west][inner sep=0.75pt]  [font=\normalsize,color={rgb, 255:red, 0; green, 0; blue, 0 }  ,opacity=1 ]  {$e_{0}$};
\draw (59.87,47.05) node [anchor=north west][inner sep=0.75pt]  [font=\normalsize,color={rgb, 255:red, 0; green, 0; blue, 0 }  ,opacity=1 ]  {$e_{1}$};
\draw (135.33,48.97) node [anchor=north west][inner sep=0.75pt]  [font=\normalsize]  {$e_{0}$};
\draw (153.15,96.81) node [anchor=north west][inner sep=0.75pt]  [font=\normalsize]  {$e_{1} '$};
\draw (99,50.4) node [anchor=north west][inner sep=0.75pt]    {$f_{1}$};
\draw (237.33,47.4) node [anchor=north west][inner sep=0.75pt]  [font=\normalsize]  {$h( e_{0})$};
\draw (262.15,98.23) node [anchor=north west][inner sep=0.75pt]  [font=\normalsize]  {$h( e_{1} ')$};
\draw (203,52.4) node [anchor=north west][inner sep=0.75pt]    {$h$};
\draw (147,5.4) node [anchor=north west][inner sep=0.75pt]    {$\psi_V$};

\end{tikzpicture}
 
 \end{multicols}



By Lemma \ref{transitive}, $G$ is an $s$-gonal graph of depth $k$, for some $s, k \in \mathbb{N}$. Hence by \eqref{eq:3}, $\G$ satisfies the hypotheses of Lemma \ref{sides}.\\ 

We now argue that in fact $G$ is either a $1$-gonal graph (and hence isomorphic to a rose $R_k$) or a $3$-gonal graph. \\

\noindent \textit{Claim:} If $s \geq 4$ then $\G'$ cannot be isomorphic to $\G$. \\
 
 \noindent \textit{Proof of Claim:} Suppose $s \geq 4$. 
 A single proper full fold between edges in $\G$ in the same side yields a graph $\G'$ with a self loop, and hence $\G'$ is not isomorphic to $\G$. Otherwise, the single fold $f_1: \G \rightarrow \G'$ must be between edges in adjacent sides.  Without loss of generality, suppose $f_1$ is the proper full fold of an edge $a$ from $v_1$ to $v_0$ over an edge $b$ from $v_1$ to $v_2$.  By definition of proper full fold, $\G'$ has an edge $a'$ from $v_2$ to $v_0$. Since the valence of every vertex in $\G$ is at least 3, Lemma \ref{sides} guarantees that for each side $\mathfrak{s}_i$ of $G$, we have $$|\mathfrak{s}_i \cap \mathcal{E}\G| \geq 1.$$ 
 
 \begin{multicols}{2}
Therefore, there must be an edge $c \in  \mathcal{E} \G$ from $v_2$ to $v_3$. Observe that in $\G'$, the vertex $v_2$ is adjacent to vertices $v_0,v_1,$ and $v_3$. Observe that every vertex in a subgraph of an $s$-gonal graph is adjacent to at most two vertices. Hence $\G'$ cannot be isomorphic to $\G$. \hfill $\diamond$
  
\columnbreak

\tikzset{every picture/.style={line width=0.75pt}} 

\begin{tikzpicture}[x=0.75pt,y=0.75pt,yscale=-1,xscale=1]

\draw [color={rgb, 255:red, 0; green, 0; blue, 0 }  ,draw opacity=1 ][line width=1.5]    (44.25,82.94) -- (60.59,35.1) ;
\draw [shift={(49.57,67.35)}, rotate = 288.86] [color={rgb, 255:red, 0; green, 0; blue, 0 }  ,draw opacity=1 ][line width=1.5]    (14.21,-6.37) .. controls (9.04,-2.99) and (4.3,-0.87) .. (0,0) .. controls (4.3,0.87) and (9.04,2.99) .. (14.21,6.37)   ;
\draw  [fill={rgb, 255:red, 0; green, 0; blue, 0 }  ,fill opacity=1 ] (59.62,38.07) .. controls (58.15,37.57) and (57.4,35.84) .. (57.94,34.21) .. controls (58.47,32.57) and (60.09,31.64) .. (61.56,32.14) .. controls (63.02,32.63) and (63.78,34.36) .. (63.24,36) .. controls (62.71,37.63) and (61.08,38.56) .. (59.62,38.07) -- cycle ;
\draw [color={rgb, 255:red, 0; green, 0; blue, 0 }  ,draw opacity=1 ][line width=1.5]    (101.34,36.43) -- (60.59,35.1) ;
\draw [shift={(89.76,36.06)}, rotate = 181.87] [color={rgb, 255:red, 0; green, 0; blue, 0 }  ,draw opacity=1 ][line width=1.5]    (14.21,-6.37) .. controls (9.04,-2.99) and (4.3,-0.87) .. (0,0) .. controls (4.3,0.87) and (9.04,2.99) .. (14.21,6.37)   ;
\draw  [fill={rgb, 255:red, 0; green, 0; blue, 0 }  ,fill opacity=1 ] (98.36,35.52) .. controls (98.83,34.05) and (100.54,33.26) .. (102.19,33.77) .. controls (103.84,34.27) and (104.79,35.87) .. (104.33,37.35) .. controls (103.86,38.82) and (102.15,39.61) .. (100.5,39.1) .. controls (98.85,38.6) and (97.89,37) .. (98.36,35.52) -- cycle ;
\draw [color={rgb, 255:red, 0; green, 0; blue, 0 }  ,draw opacity=1 ][line width=1.5]    (117.49,83.52) -- (101.34,36.43) ;
\draw [shift={(112.27,68.3)}, rotate = 251.07] [color={rgb, 255:red, 0; green, 0; blue, 0 }  ,draw opacity=1 ][line width=1.5]    (14.21,-6.37) .. controls (9.04,-2.99) and (4.3,-0.87) .. (0,0) .. controls (4.3,0.87) and (9.04,2.99) .. (14.21,6.37)   ;
\draw  [fill={rgb, 255:red, 0; green, 0; blue, 0 }  ,fill opacity=1 ] (45.51,85.79) .. controls (44.09,86.4) and (42.38,85.62) .. (41.68,84.05) .. controls (40.98,82.47) and (41.56,80.7) .. (42.98,80.09) .. controls (44.4,79.47) and (46.12,80.25) .. (46.82,81.83) .. controls (47.52,83.4) and (46.93,85.18) .. (45.51,85.79) -- cycle ;
\draw  [fill={rgb, 255:red, 0; green, 0; blue, 0 }  ,fill opacity=1 ] (120.44,84.54) .. controls (119.92,85.99) and (118.18,86.72) .. (116.55,86.15) .. controls (114.92,85.59) and (114.02,83.95) .. (114.54,82.49) .. controls (115.06,81.04) and (116.81,80.32) .. (118.43,80.88) .. controls (120.06,81.44) and (120.96,83.08) .. (120.44,84.54) -- cycle ;
\draw    (154,59) -- (213,59) ;
\draw [shift={(215,59)}, rotate = 180] [color={rgb, 255:red, 0; green, 0; blue, 0 }  ][line width=0.75]    (10.93,-3.29) .. controls (6.95,-1.4) and (3.31,-0.3) .. (0,0) .. controls (3.31,0.3) and (6.95,1.4) .. (10.93,3.29)   ;
\draw [color={rgb, 255:red, 0; green, 0; blue, 0 }  ,draw opacity=1 ][line width=1.5]    (245.25,84.94) -- (302.34,38.43) ;
\draw [shift={(266.97,67.24)}, rotate = 320.84] [color={rgb, 255:red, 0; green, 0; blue, 0 }  ,draw opacity=1 ][line width=1.5]    (14.21,-6.37) .. controls (9.04,-2.99) and (4.3,-0.87) .. (0,0) .. controls (4.3,0.87) and (9.04,2.99) .. (14.21,6.37)   ;
\draw  [fill={rgb, 255:red, 0; green, 0; blue, 0 }  ,fill opacity=1 ] (260.62,40.07) .. controls (259.15,39.57) and (258.4,37.84) .. (258.94,36.21) .. controls (259.47,34.57) and (261.09,33.64) .. (262.56,34.14) .. controls (264.02,34.63) and (264.78,36.36) .. (264.24,38) .. controls (263.71,39.63) and (262.08,40.56) .. (260.62,40.07) -- cycle ;
\draw [color={rgb, 255:red, 0; green, 0; blue, 0 }  ,draw opacity=1 ][line width=1.5]    (302.91,38.61) -- (261.59,37.1) ;
\draw [shift={(291.04,38.18)}, rotate = 182.09] [color={rgb, 255:red, 0; green, 0; blue, 0 }  ,draw opacity=1 ][line width=1.5]    (14.21,-6.37) .. controls (9.04,-2.99) and (4.3,-0.87) .. (0,0) .. controls (4.3,0.87) and (9.04,2.99) .. (14.21,6.37)   ;
\draw  [fill={rgb, 255:red, 0; green, 0; blue, 0 }  ,fill opacity=1 ] (299.36,37.52) .. controls (299.83,36.05) and (301.54,35.26) .. (303.19,35.77) .. controls (304.84,36.27) and (305.79,37.87) .. (305.33,39.35) .. controls (304.86,40.82) and (303.15,41.61) .. (301.5,41.1) .. controls (299.85,40.6) and (298.89,39) .. (299.36,37.52) -- cycle ;
\draw [color={rgb, 255:red, 0; green, 0; blue, 0 }  ,draw opacity=1 ][line width=1.5]    (318.49,86.52) -- (303.35,39.09) ;
\draw [shift={(313.6,71.19)}, rotate = 252.29] [color={rgb, 255:red, 0; green, 0; blue, 0 }  ,draw opacity=1 ][line width=1.5]    (14.21,-6.37) .. controls (9.04,-2.99) and (4.3,-0.87) .. (0,0) .. controls (4.3,0.87) and (9.04,2.99) .. (14.21,6.37)   ;
\draw  [fill={rgb, 255:red, 0; green, 0; blue, 0 }  ,fill opacity=1 ] (246.51,87.79) .. controls (245.09,88.4) and (243.38,87.62) .. (242.68,86.05) .. controls (241.98,84.47) and (242.56,82.7) .. (243.98,82.09) .. controls (245.4,81.47) and (247.12,82.25) .. (247.82,83.83) .. controls (248.52,85.4) and (247.93,87.18) .. (246.51,87.79) -- cycle ;
\draw  [fill={rgb, 255:red, 0; green, 0; blue, 0 }  ,fill opacity=1 ] (321.44,87.54) .. controls (320.92,88.99) and (319.18,89.72) .. (317.55,89.15) .. controls (315.92,88.59) and (315.02,86.95) .. (315.54,85.49) .. controls (316.06,84.04) and (317.81,83.32) .. (319.43,83.88) .. controls (321.06,84.44) and (321.96,86.08) .. (321.44,87.54) -- cycle ;

\draw (23,77.4) node [anchor=north west][inner sep=0.75pt]  [font=\footnotesize]  {$v_{0}$};
\draw (53,17.4) node [anchor=north west][inner sep=0.75pt]  [font=\footnotesize]  {$v_{1}$};
\draw (106,22.4) node [anchor=north west][inner sep=0.75pt]  [font=\footnotesize]  {$v_{2}$};
\draw (34,44.4) node [anchor=north west][inner sep=0.75pt]  [color={rgb, 255:red, 0; green, 0; blue, 0 }  ,opacity=1 ]  {$a$};
\draw (78,10.4) node [anchor=north west][inner sep=0.75pt]  [color={rgb, 255:red, 0; green, 0; blue, 0 }  ,opacity=1 ]  {$b$};
\draw (118,40.4) node [anchor=north west][inner sep=0.75pt]  [color={rgb, 255:red, 0; green, 0; blue, 0 }  ,opacity=1 ]  {$c$};
\draw (266,69.4) node [anchor=north west][inner sep=0.75pt]  [color={rgb, 255:red, 0; green, 0; blue, 0 }  ,opacity=1 ]  {$a'$};
\draw (177,35.4) node [anchor=north west][inner sep=0.75pt]    {$f_{1}$};
\draw (126,80.4) node [anchor=north west][inner sep=0.75pt]  [font=\footnotesize]  {$v_{3}$};
\draw (254,19.4) node [anchor=north west][inner sep=0.75pt]  [font=\footnotesize]  {$v_{1}$};
\draw (307,24.4) node [anchor=north west][inner sep=0.75pt]  [font=\footnotesize]  {$v_{2}$};
\draw (327,82.4) node [anchor=north west][inner sep=0.75pt]  [font=\footnotesize]  {$v_{3}$};
\draw (280,11.4) node [anchor=north west][inner sep=0.75pt]  [color={rgb, 255:red, 0; green, 0; blue, 0 }  ,opacity=1 ]  {$b$};
\draw (319,45.4) node [anchor=north west][inner sep=0.75pt]  [color={rgb, 255:red, 0; green, 0; blue, 0 }  ,opacity=1 ]  {$c$};
\draw (225,80.4) node [anchor=north west][inner sep=0.75pt]  [font=\footnotesize]  {$v_{0}$};

\end{tikzpicture}

\end{multicols}
  
 
Since $h:\G' \rightarrow \G$ is a graph isomorphism, $\G'$ must be isomorphic to $\G$. Hence $1 \leq s \leq 3$. \\

 If $s=1$, then $\G\cong R_k$. Any subgraph of $R_k$ is another rose $R_j$ for some $j \leq k$. Since the rank of $R_k$ is equal to $k$, we can build a rose with any rank. \\
 
 If $s=2$, then $G$ is a (1,1)-bipartite graph. As a connected non-empty subgraph of $G$, the graph $\G$ is also a (1,1)-bipartite graph. Any single proper full fold in $\G$ yields an edge $e_1'$ with $\iota(e_1')=\tau(e_1')$. Hence $\G'$ is not bipartite, and thus not isomorphic to $\G$, a contradiction. Hence $s \in \{1 , 3\}$.\\

Suppose $s=3$. Then $G\cong P_{3,k}$. 
By Lemma \ref{sides}, up to relabeling of the sides $\mathfrak{s}_i$, we have the following three cases: \\
\begin{itemize}
\item [(i)]If $n = 3m$ for some $m \in \mathbb{N}$, then $$(|(\mathfrak{s}_0 \cap \mathcal{E}\G)|,|(\mathfrak{s}_1 \cap \mathcal{E}\G)|,|(\mathfrak{s}_2 \cap \mathcal{E}\G)|)  = (m,m,m).$$ 
Hence $\G \cong P_{3,m}$. \\
\item[(ii)]If $n = 3m+1$ for some $m \in \mathbb{N}$, then $$(|(\mathfrak{s}_0 \cap \mathcal{E}\G)|,|(\mathfrak{s}_1 \cap \mathcal{E}\G)|,|(\mathfrak{s}_2 \cap \mathcal{E}\G)|)  = (m+1,m,m).$$
Hence $\G \cong \Delta_{m}^+.$ \\
\item[(iii)] If $n = 3m+2$ for some $m \in \mathbb{N}$, then $$(|(\mathfrak{s}_0 \cap \mathcal{E}\G)|,|(\mathfrak{s}_1 \cap \mathcal{E}\G)|,|(\mathfrak{s}_2 \cap \mathcal{E}\G)|)  = (m+1,m+1,m).$$
Hence $\G \cong \Delta_{m+1}^-$. \\
\end{itemize} 
Observe that when $s=3$, we have $|\mathcal{V}\G|=3$. Hence by the Euler characteristic formula, the rank $r$ of $\G$ is computed as \begin{align*} r = & \ |\mathcal{E}\G| - |\mathcal{V}\G| +1\\  = & \ n-2. \end{align*} Thus, the above cases correspond to $r \equiv 1,2,0$ mod $3$ respectively. \\

Now we need only rule out the possibility that $\G$ is isomorphic to $P_{3,m}$. In this case, any single proper full fold yields a graph with a self loop or a $3-$gonal graph with side depths $(m,m-1,m+1)$. Hence $\G'$ is not isomorphic to $P_{3,m}$, a contradiction. \end{proof}

\bigskip

\section{Further Observations and Questions}\label{further}

\subsection{Unique Minimizer in Out$(F_3)$} We have the following application of Theorems \hyperref[lower bound]{A} and \hyperref[single folds]{C}. 

\begin{cor}\label{unique min} The element $\varphi \in $ Out$(F_3)$ given by $\varphi: x \mapsto y \mapsto z \mapsto zx^{-1}$ defines the unique Out$(F_3)-$conjugacy class of infinite order irreducible elements realizing the minimal stretch factor $\lambda \approx 1.167$, the largest real root of $x^5-x-1$. 
\end{cor}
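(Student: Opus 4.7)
The plan is to combine Theorems \hyperref[lower bound]{A} and \hyperref[single folds]{C} to reduce the set of candidate minimizers to single fold irreducible train track maps on $\Delta_2^-$, and then identify the $\textup{Out}(F_3)$-conjugacy class of $\mathfrak{g}$ as the only one achieving stretch factor $\lambda$. First I would fix an infinite order irreducible element $\psi \in \textup{Out}(F_3)$ with stretch factor $\mu \leq \lambda$ and, by Theorem \ref{bestvina}, pass to an i.t.t.\ representative $f:\G \rightarrow \G$ on a connected rank $3$ graph. Writing $n = |\mathcal{E}\G|$ and $v = |\mathcal{V}\G|$, the Euler characteristic relation $n = v+2$ combined with the minimum-valence-$3$ condition $3v \leq 2n$ forces $v \leq 4$, so $n \in \{3,4,5,6\}$. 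Since $\mu > 1$, Lemma \ref{helpful facts} implies $m \geq 1$, where $m$ is the fold count in a Stallings decomposition of $f$. Theorem A then gives $m+1 \leq \lambda^n$; using $\lambda^5 = \lambda + 1$ one computes $\lambda^3, \lambda^4 < 2$ and $\lambda^5, \lambda^6 < 3$, so the only surviving case is $m = 1$ with $n \in \{5,6\}$.

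Next, because $r = 3 \equiv 0 \pmod{3}$, Theorem C constrains any rank $3$ graph supporting a single fold irreducible homotopy equivalence map to be isomorphic either to $R_3$ (with $3$ edges) or to $\Delta_2^-$ (with $5$ edges). Combined with $n \in \{5,6\}$, this pins down $\G \cong \Delta_2^-$ and $n = 5$. Thus any infinite order irreducible $\psi \in \textup{Out}(F_3)$ with $\lambda(\psi) \leq \lambda$ is represented by a single fold i.t.t.\ map on $\Delta_2^-$.

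The remaining and expected main obstacle is to verify that among such maps, only the $\textup{Out}(F_3)$-conjugacy class of $\varphi$ realizes the stretch factor $\lambda$. Such a map is determined, up to post-composition with $\textup{Aut}(\Delta_2^-)$, by a choice of ordered pair of edges in $\Delta_2^-$ sharing an initial vertex, together with a graph isomorphism from the folded graph back to $\Delta_2^-$. I would enumerate the finitely many combinatorial types for which the folded graph is in fact isomorphic to $\Delta_2^-$ and the resulting composition is an irreducible train track map, compute the transition matrix in each case, and verify that the largest real root of its characteristic polynomial is $\geq \lambda$, with equality occurring for a single conjugacy class, namely that of $\mathfrak{g}$ from Example \ref{ahlp single fold}. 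This is a finite but case-heavy verification; Algom-Kfir and Rafi's construction in \cite{AKRstretch} together with \cite{wiggd1} already identifies $\mathfrak{g}$ as a stretch-factor-$\lambda$ representative, so the real work is in ruling out every other combinatorial alternative.
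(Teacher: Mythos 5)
Your reduction is correct and follows the paper's route exactly: pass to an i.t.t.\ representative on a connected rank $3$ graph, use the Euler characteristic and minimum-valence-$3$ condition to force $n \leq 6$, use Lemma \ref{helpful facts} to get $m \geq 1$, use Theorem A together with the numerics $\lambda^4 < 2$ and $\lambda^6 < 3$ to force $m=1$ and $n \geq 5$, and then invoke Theorem C to conclude $\G \cong \Delta_2^-$. This is the same chain of reductions the paper performs (modulo the paper phrasing the numerics as $\lambda < 2^{1/4} < 3^{1/6}$ rather than listing powers).

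The genuine gap is in your final paragraph: the case enumeration that actually establishes uniqueness is described but not carried out, and it is not a routine appeal to prior literature --- the references you cite (\cite{AKRstretch}, \cite{wiggd1}) give existence of a stretch-factor-$\lambda$ representative, not uniqueness of the conjugacy class. Without the case check, the corollary is unproved. The paper's enumeration is also more tightly organized than your sketch suggests: one first observes that, up to relabeling, the \emph{only} proper full fold on $\Delta_2^-$ whose quotient is again isomorphic to $\Delta_2^-$ is the fold of $c_2$ over $\overline{b_1}$, so there is no freedom in the fold $f_1$; the entire enumeration is over the finitely many graph isomorphisms $h:\G' \to \Delta_2^-$. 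Continuity at the folded vertex forces $h(c_1)\in\{a_1,\overline{a_1}\}$, splitting into a handful of subcases, each of which is either reducible (detected by a proper invariant edge set) or has characteristic polynomial $x^5 - x^4 - 1$ with a larger leading root, leaving only the map $\mathfrak{g}$ of Example \ref{ahlp single fold}. To complete your proof you would need to carry out exactly this enumeration.
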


 \begin{proof} The element $\varphi$ is Example \ref{ahlp single fold}. It is shown in \cite{wiggd1} that $\varphi$ has stretch factor $\lambda(\varphi) \approx 1.167$, the largest real root of $x^5-x-1$. 
 Suppose $\phi \in $ Out$(F_3)$ is an infinite order irreducible element with $\lambda(\phi) \leq \lambda(\varphi)$. Let $f:\G \rightarrow \G$ be an irreducible train track representative of $\phi$ on a connected rank $3$ graph $\G$.  Since $\phi$ is infinite order, $\lambda_f>1$ by Theorem \ref{bestvina}. Thus by Lemma \ref{helpful facts}, $f$ must have at least one fold in its fold decomposition. Since 
 $$\lambda_f \leq \lambda(\varphi) < 2^{\frac{1}{4}} < 3^{\frac{1}{6}},$$ by Theorem \hyperref[lower bound]{A}, $f$ must have exactly one fold in its fold decomposition and $\G$ must have at least 5 edges. As the vertices of $\G$ have valence at least 3 and $\G$ has rank 3, an Euler characteristic argument shows $\G$ can have no more than 6 edges. Hence by Theorem \hyperref[single folds]{C}, $\G \cong \Delta_2^-$.  
 
Suppose $f=h \circ f_1 $ is a fold decomposition, so $f_1:\G \rightarrow \G'$ is a proper full fold and $h: \G' \rightarrow \G$ is a graph isomorphism. Up to relabeling the edges, the only proper full fold on $\Delta_2^-$ which yields an isomorphic graph is the proper full fold of $c_2$ over $\overline{b_1}$. Without loss of generality, suppose $\G = \Delta_2^-$, give $\G$ the labels in Example \ref{ahlp single fold}, and assume $f_1$ is the proper full fold of $c_2$ over $\overline{b_1}$. By continuity, we must have $h(c_1) \in \{a_1,\overline{a_1}\}$.

Suppose $h(c_1) = \overline{a_1}$. If $h(a_1) =\overline{c_1}$, then $f(c_1)=\overline{a_1}$ and $f(a_1)=\overline{c_1}$, so $f$ is reducible. This leaves two ways $h$ could map the remaining edges:
\begin{itemize}
\item[(i)]$h: a_1 \mapsto \overline{c_2}$, $c_2' \mapsto c_1$, $b_1 \mapsto  \overline{b_1}$, and $b_2 \mapsto \overline{b_2}$. \\
In this case $f(b_1)=\overline{b_1}$, so $f$ is reducible.
\item[(ii)] $h: a_1 \mapsto \overline{c_2}$. $c_2' \mapsto c_1$. $b_1 \mapsto  \overline{b_2}$. $b_2 \mapsto \overline{b_1}$. \\
In this case, $f(b_1) = \overline{b_2}$ and $f(b_2) = \overline{b_1}$, so again $f$ is reducible. 

\end{itemize}
 Thus $h(c_1)\neq \overline{a_1}$, so we must have $h(c_1)=a_1$. Then $h$ maps the remaining edges in one of the following four ways:
\begin{itemize}
\item[(i)] $h: b_1 \mapsto c_1, \ b_2 \mapsto c_2$, $ \ a_1 \mapsto  b_2$,  and $c_2' \mapsto \overline{b_1}$. \\
In this case,  $f$ is equal to $\mathfrak{g}$ in Example \ref{ahlp single fold} and hence $\phi$ is Out$(F_3)-$conjugate to $\varphi$. 
\item[(ii)] $h: b_1 \mapsto c_2$, $ \ b_2 \mapsto c_1$, $ \ a_1 \mapsto  b_2$, and $c_2' \mapsto \overline{b_1}$. \\
In this case, we have $f(a_1)= b_2$, $f(b_2) = c_1$ and $f(c_1)=a_1$, so $f$ is reducible.
\item[(iii)]  $h: b_1 \mapsto c_1$, $ \ b_2 \mapsto c_2$, $ \ a_1 \mapsto  b_1 $, $ \ c_2' \mapsto \overline{b_2}$. \\
In this case, we have $f(a_1) = b_1$, $f(b_1)=c_1$, and $f(c_1)=a_1$, so $f$ is reducible.. 
\item[(iv)]  $h: b_1 \mapsto c_2$, $ \ b_2 \mapsto c_1$, $ \ a_1 \mapsto  b_1 $,  $ \ c_2' \mapsto \overline{b_2}$. \\
In this case, we have $f: b_2 \mapsto c_1 \mapsto a_1 \mapsto b_1 \mapsto c_2 \mapsto \overline{b_2}\overline{c_2}$. Then $\lambda_f$ is equal to the largest root of $x^5-x^4-1$, which is larger than $\lambda(\varphi)$.\\
\end{itemize}


Therefore, if $\phi$ is an infinite order irreducible element of Out$(F_3)$ with $\lambda(\phi) \leq \lambda(\varphi)$, then $\phi$ is Out$(F_3)-$conjugate to $\varphi$, and hence has $\lambda(\phi)=\lambda(\varphi)$.  \end{proof}

\smallskip
 \subsection{Single Fold Irreducible Train Track on a Disconnected Graph} The hypothesis that $\G$ is connected in Theorem \hyperref[single folds]{C} is in fact necessary. 
 
 \begin{ex}\label{single fold disconnected} Let $\G$ be the graph consisting of the union of two disjoint copies $\Delta_2^-$. For the first copy of $\Delta_2^-$, use the same labels for edges as in Example \ref{ahlp single fold}, and use $a_1', b_1',b_2', c_1',$ and $c_2'$ as edge labels for the second copy of $\Delta_2^-$. Now define $f:\G \rightarrow \G$ by
 $$f: \begin{cases} 
 b_1   \mapsto b_1' \mapsto c_1 \\
 c_1  \mapsto c_1' \mapsto a_1 \\
 a_1  \mapsto a_1' \mapsto b_2 \\
 b_2 \mapsto b_2' \mapsto c_2 \\
 c_2 \mapsto c_2' \mapsto \overline{b_1} \overline{c_1}
 \end{cases}
 $$
 Then $f$ is a single fold irreducible train track map and the leading eigenvalue of $T(f)$ is $\lambda^{\frac{1}{2}}$ for $\lambda$ equal to the largest root of $x^5 - x -1$. By taking $n$ copies of $\Delta_2^-$, this example can be generalized to build a single fold irreducible train track map with leading eigenvalue $\lambda^{\frac{1}{n}}$.  
 However, when $\G$ is disconnected, homotopy equivalences on $\G$ don't correspond to outer automorphisms of $F_r$. 
 \end{ex}
 
 \bigskip
 
 \subsection{Candidate for Minimal Rank 4 Stretch Factor} By Theorem \hyperref[single folds]{C}, the only single fold i.t.t. maps on connected rank 4 graphs are on $R_4$. Among the single folds on $R_4$, the map sending $e_1 \mapsto e_2 \mapsto e_3 \mapsto e_4 \mapsto e_1e_2$ has the smallest stretch factor, which is the largest root of $x^4-x-1$, approximately $1.221$. However, this is not minimal in Out$(F_4)$. 
 
 \begin{ex}Consider the following single stack, 2 fold irreducible train track map $\gamma$ on a subgraph of the 4-gonal graph of depth 2:
 \begin{center}
 \tikzset{every picture/.style={line width=0.75pt}} 

\begin{tikzpicture}[x=0.75pt,y=0.75pt,yscale=-1,xscale=1]

\draw   (34,35.5) -- (114,35.5) -- (114,110.5) -- (34,110.5) -- cycle ;
\draw    (34,35.5) -- (114,35.5) ;
\draw [shift={(80,35.5)}, rotate = 180] [color={rgb, 255:red, 0; green, 0; blue, 0 }  ][line width=0.75]    (10.93,-3.29) .. controls (6.95,-1.4) and (3.31,-0.3) .. (0,0) .. controls (3.31,0.3) and (6.95,1.4) .. (10.93,3.29)   ;
\draw    (114,35.5) -- (114,110.5) ;
\draw [shift={(114,79)}, rotate = 270] [color={rgb, 255:red, 0; green, 0; blue, 0 }  ][line width=0.75]    (10.93,-3.29) .. controls (6.95,-1.4) and (3.31,-0.3) .. (0,0) .. controls (3.31,0.3) and (6.95,1.4) .. (10.93,3.29)   ;
\draw    (114,110.5) -- (34,110.5) ;
\draw [shift={(68,110.5)}, rotate = 360] [color={rgb, 255:red, 0; green, 0; blue, 0 }  ][line width=0.75]    (10.93,-3.29) .. controls (6.95,-1.4) and (3.31,-0.3) .. (0,0) .. controls (3.31,0.3) and (6.95,1.4) .. (10.93,3.29)   ;
\draw    (34,110.5) -- (34,35.5) ;
\draw [shift={(34,67)}, rotate = 90] [color={rgb, 255:red, 0; green, 0; blue, 0 }  ][line width=0.75]    (10.93,-3.29) .. controls (6.95,-1.4) and (3.31,-0.3) .. (0,0) .. controls (3.31,0.3) and (6.95,1.4) .. (10.93,3.29)   ;
\draw    (114,35.5) .. controls (86,13.5) and (59,13.5) .. (34,35.5) ;
\draw [shift={(80.54,19.6)}, rotate = 185.37] [color={rgb, 255:red, 0; green, 0; blue, 0 }  ][line width=0.75]    (10.93,-3.29) .. controls (6.95,-1.4) and (3.31,-0.3) .. (0,0) .. controls (3.31,0.3) and (6.95,1.4) .. (10.93,3.29)   ;
\draw    (114,110.5) .. controls (138,86.5) and (134,55.5) .. (114,35.5) ;
\draw [shift={(130.07,79.89)}, rotate = 274.35] [color={rgb, 255:red, 0; green, 0; blue, 0 }  ][line width=0.75]    (10.93,-3.29) .. controls (6.95,-1.4) and (3.31,-0.3) .. (0,0) .. controls (3.31,0.3) and (6.95,1.4) .. (10.93,3.29)   ;
\draw    (34,110.5) .. controls (53,130.5) and (94,133.5) .. (114,110.5) ;
\draw [shift={(67.29,126.15)}, rotate = 4.45] [color={rgb, 255:red, 0; green, 0; blue, 0 }  ][line width=0.75]    (10.93,-3.29) .. controls (6.95,-1.4) and (3.31,-0.3) .. (0,0) .. controls (3.31,0.3) and (6.95,1.4) .. (10.93,3.29)   ;

\draw (186.74,4.4) node [anchor=north west][inner sep=0.75pt]  [font=\normalsize]  {$\gamma :\begin{cases}
a\mapsto b & \\
b\mapsto c & \\
c\mapsto d & \\
d\mapsto e & \\
e\mapsto f & \\
f\mapsto g & \\
g\mapsto \overline{c}\overline{b}\overline{a} & 
\end{cases}$};
\draw (70,41.4) node [anchor=north west][inner sep=0.75pt]    {$a$};
\draw (97,64.4) node [anchor=north west][inner sep=0.75pt]    {$b$};
\draw (71,90.4) node [anchor=north west][inner sep=0.75pt]    {$c$};
\draw (40,62.4) node [anchor=north west][inner sep=0.75pt]    {$d$};
\draw (70,-1.6) node [anchor=north west][inner sep=0.75pt]    {$e$};
\draw (134,65.4) node [anchor=north west][inner sep=0.75pt]    {$f$};
\draw (69,132.4) node [anchor=north west][inner sep=0.75pt]    {$g$};

\end{tikzpicture}
 \end{center}
This represents the irreducible outer automorphism, $\varphi: w \mapsto x \mapsto y \mapsto z \mapsto zw^{-1}$, which has stretch factor $\lambda_{\gamma}$ equal to the largest root of $x^7-x^2-x-1$, approximately $\lambda_{\gamma} \approx 1.203$. By the proof of Theorem A in \cite{wiggd1}, every irreducible $\varphi \in $ Out$(F_4)$ has an i.t.t. representative on a graph with at most $3(4)-4=8$ edges. Since
$$\lambda_{\gamma} < 3^{\frac{1}{5}} < 4^{\frac{1}{7}} < 5^{\frac{1}{8}},$$
Theorem \hyperref[lower bound]{A} implies any irreducible $\varphi \in $ Out$(F_4)$ with stretch factor less than $\lambda_{\gamma}$ must have an i.t.t. representative which is either 2 folds on a graph with 6, 7 or 8 edges or 3 folds on a graph with 8 edges. 
\end{ex}


\bigskip

\bibliographystyle{alpha}

\bibliography{PaperRefs}

\end{document}